\documentclass{amsart}
\usepackage{amsmath,amssymb,amsthm}
\usepackage{graphicx}
\usepackage{subcaption}
\usepackage{algorithm}
\usepackage{algpseudocode}
\usepackage{placeins}
\usepackage[margin=1.0in]{geometry}
\usepackage[hidelinks]{hyperref}

\theoremstyle{plain}
\newtheorem{thm}{Theorem}[section]
\newtheorem{prop}[thm]{Proposition}
\newtheorem{lemma}[thm]{Lemma}
\newtheorem{cor}[thm]{Corollary}

\theoremstyle{definition}
\newtheorem{defn}[thm]{Definition}
\newtheorem{exmp}[thm]{Example}
\newtheorem{rem}[thm]{Remark}

\newcommand{\A}{\mathbb{C}^*\times\mathbb{C}}
\newcommand{\B}{B_{\frac{1}{2}}\left(\frac{1}{2},0,0\right)}
\newcommand{\cp}{\mathbb{P}^2_\mathbb{C}}
\DeclareMathOperator{\Real}{Re}
\DeclareMathOperator{\Imag}{Im}
\DeclareMathOperator{\characteristic}{char}
\DeclareMathOperator{\interior}{int}
\numberwithin{equation}{section}
\numberwithin{algorithm}{section}
\numberwithin{figure}{section}

\title{Visualization of Complex Projective Curves}
\author{Seth Dutter}
\address{Department of Mathematics, Statistics and Computer Science, University of Wisconsin--Stout Polytechnic}
\email{dutters@uwstout.edu}

\date{August 4, 2026}

\hypersetup{pdftitle={Visualization of Complex Projective Curves}, pdfauthor={Seth Dutter}}

\subjclass[2020]{Primary 14H50; Secondary 14Q05, 68U05.}

\begin{document}

\begin{abstract}
We introduce a nonlinear map $\alpha:\mathbb{C}^2\rightarrow\mathbb{R}^3$ with the purpose of visualizing curves. Basic properties of $\alpha$ are proved, including preservation of orthogonality, recovery of the magnitudes of vectors in the preimage, and continuous extension of $\alpha$ to  $\widetilde{\alpha}:\mathbb{P}^2_\mathbb{C}\rightarrow\mathbb{R}^3$. For plane curves $Z\subset\mathbb{P}^2_\mathbb{C}$, it is proved that $\widetilde{\alpha}(Z)$ is the union of boundaries of star-shaped domains. Methods are established to descend finite-order automorphisms of smooth projective curves to rotations of their images in $\mathbb{R}^3$. Efficient techniques for creating meshes and ray-traced images of $\widetilde{\alpha}(Z)$ are developed.
\end{abstract}

\maketitle

\section{Introduction}

Let $X\subset \mathbb{C}^2$ denote an irreducible algebraic curve defined by a complex polynomial $f(u,v)=0$. Writing $u=u_0+iu_1$ and $v=v_0+iv_1$, we identify $\mathbb{C}^2$ with $\mathbb{R}^4$ by
\[
(u,v)\mapsto (u_0,u_1,v_0,v_1).
\]
Under this identification, the smooth locus of $X$ is a real two-dimensional manifold in four-dimensional space. Although its genus is readily computable by algebraic means, this alone does not determine what the surface looks like as a subset of $\mathbb{R}^4$.

In order to visualize such a surface on a computer, one typically projects from $\mathbb{R}^4$ to $\mathbb{R}^3$. A standard choice is to discard a coordinate, for example, by sending
\[
(u_0,u_1,v_0,v_1)\mapsto (u_0,u_1,v_0).
\]
In the case of a complex curve, we can write its defining equation, $f(u,v)=0$, as a system of two real equations and algebraically eliminate a variable. Unfortunately, for a general complex polynomial of degree $d$, this process results in an implicitly defined surface in $\mathbb{R}^3$ of degree $d^2$. The number of monomials in three variables of degree at most $d^2$ is $\binom{d^2+3}{3}\sim \frac{1}{6}d^6$, which can become impractical to work with for even moderately sized $d$.

To compound matters, ray tracing such a surface requires computing normal vectors, which in turn necessitates either explicit partial derivatives of the resultant or numerical approximation of these same partial derivatives. This approach is far more computationally expensive than working directly with the original polynomial in two complex variables. In the special case that the curve is rational, one can instead parameterize the curve and project the resulting mesh. For more general curves, algorithms exist which track branches and assemble a mesh in $\mathbb{C}^2$ before projecting to $\mathbb{R}^3$. See, for example, \cite{Kranich2015}. These approaches offer substantial improvements in speed.

Unfortunately, regardless of how it is computed, such a linear projection loses most of the structure of $\mathbb{C}^2$ and $X$. The norm of a vector in $\mathbb{C}^2$ is no longer recoverable, a continuous extension to $\cp$ is not possible, and information about orthogonality is lost. In this paper we develop a function which addresses the above issues while also allowing for computationally efficient rendering of the resulting real surfaces and visualization of finite-order automorphisms.

In Section \ref{section:projection} we define and then prove basic properties of the function with a focus on the image of $\mathbb{C}^2$. We next study the geometry of the image of curves in Section \ref{section:geometry}, including an exploration of how automorphisms of projective curves descend to rotations of their images. Finally, in Section \ref{section:visualization}, efficient algorithms for mesh construction and ray tracing are established.

\begin{figure}[!ht]
    \centering
    \begin{subfigure}{0.3\textwidth}
        \centering
        \includegraphics[height=1.5in]{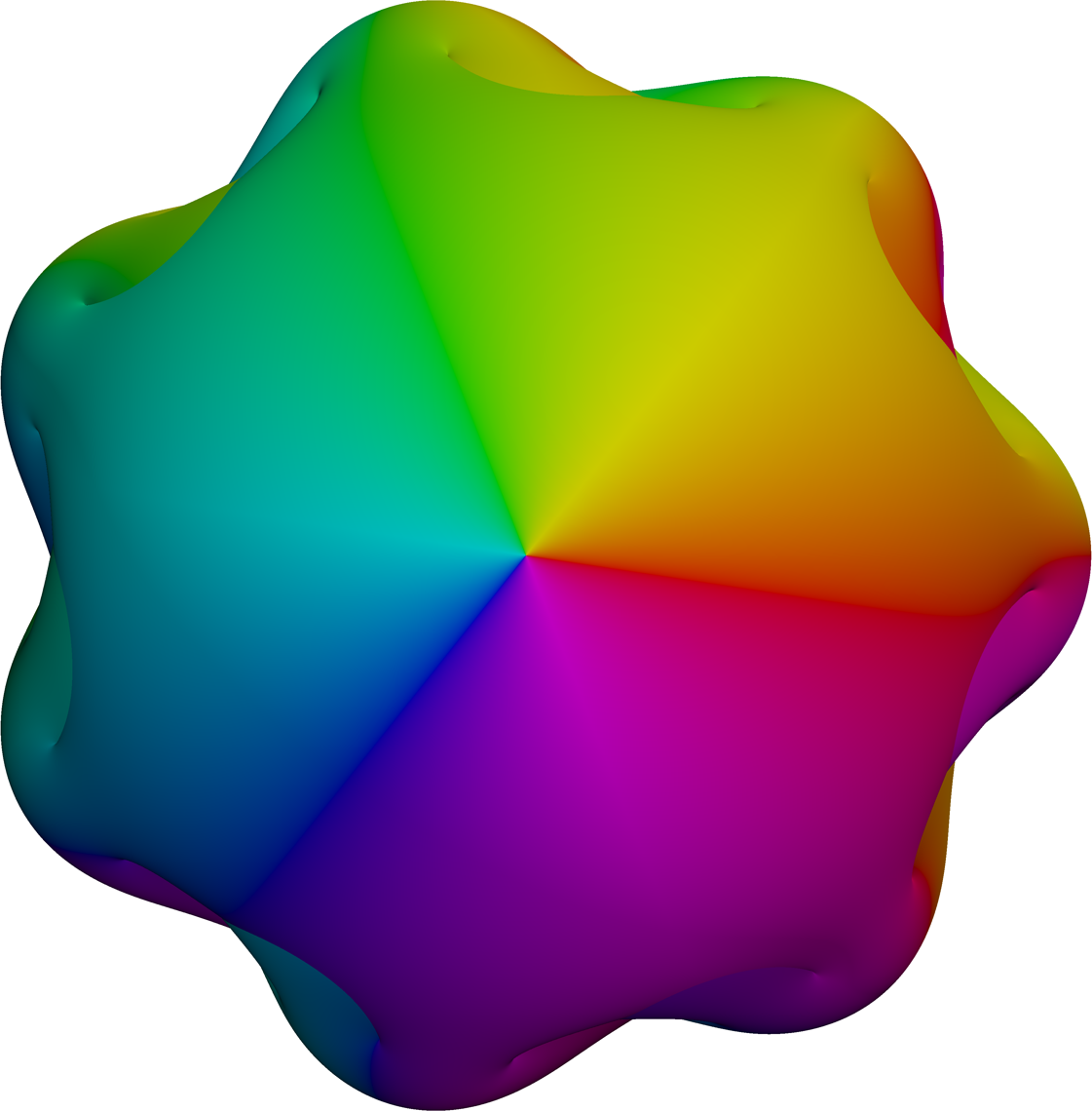}
        \caption{Rear view}
    \end{subfigure}
    \hfill
    \begin{subfigure}{0.3\textwidth}
        \centering
        \includegraphics[height=1.5in]{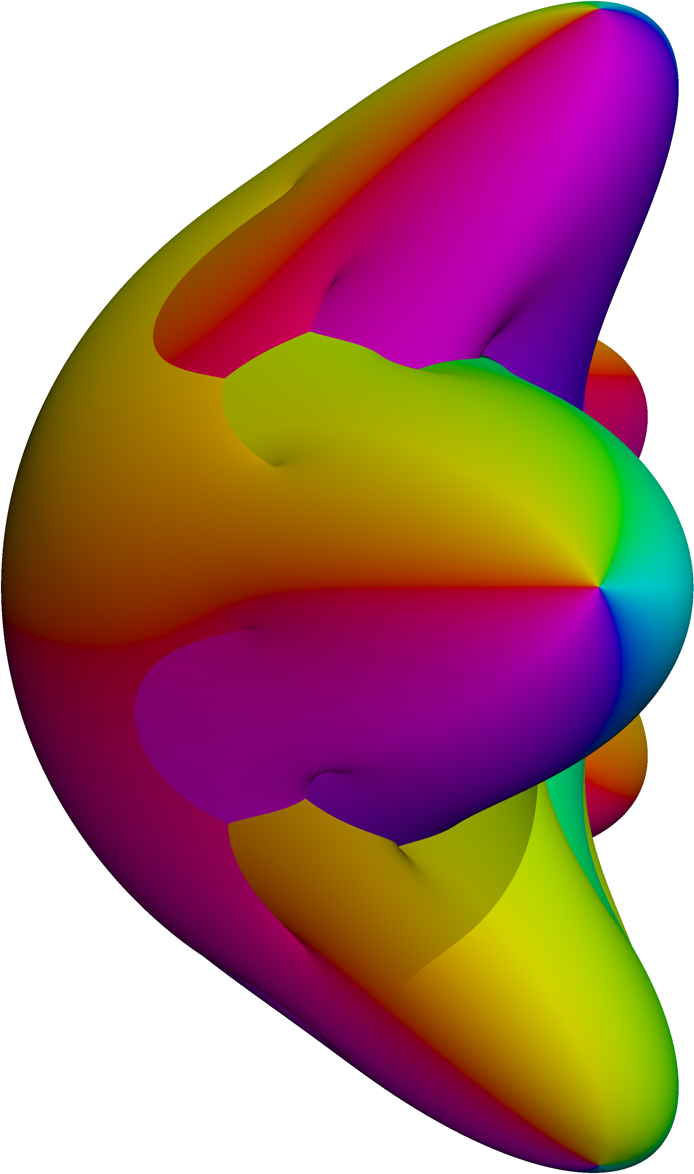}
        \caption{Side view}
    \end{subfigure}
    \hfill
    \begin{subfigure}{0.3\textwidth}
        \centering
        \includegraphics[height=1.5in]{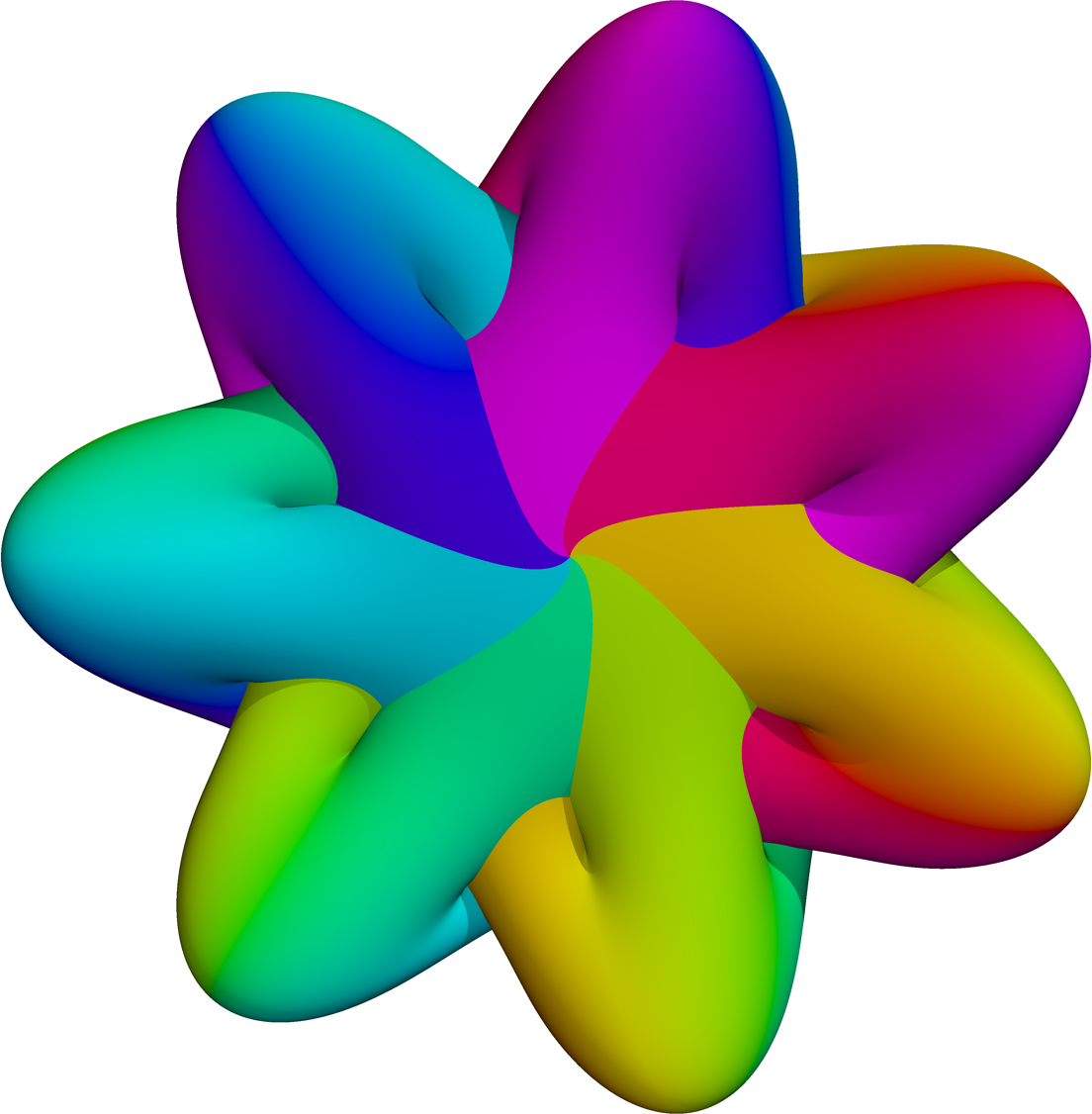}
        \caption{Front view}
    \end{subfigure}
    \caption{Ray-traced views of the genus $15$ projective curve $\left(v+\frac{i}{2}w\right)^7+iu^7+w^7=0$. The automorphism $[u:v:w]\mapsto[e^{2\pi i/7}u:v:w]$ can be seen in the rotational symmetry of the image.}
    \label{fig:intro}
\end{figure}

\section{The projection \texorpdfstring{$\alpha$}{alpha}}\label{section:projection}

We write $(x,y,z)$ for coordinates on $\mathbb{R}^3$ and identify $\mathbb{R}^3$ with $\mathbb{R}\times \mathbb{C}$ by $(x,y,z)\mapsto (x,y+iz)$ whenever it is notationally convenient. Additionally, we define
\[
B=\B\subset \mathbb{R}^3
\]
to be the open ball of radius $\frac{1}{2}$ centered at $(\frac{1}{2},0,0)$. The map studied throughout the paper, $\alpha:\mathbb{C}^2\rightarrow\mathbb{R}^3$, is defined by
\[
\alpha(u,v)=\left(\frac{|u|^2}{1+|u|^2+|v|^2},\frac{\bar{u}v}{1+|u|^2+|v|^2}\right).
\]
Because $\alpha(0,v)=(0,0,0)$ for every $v\in\mathbb{C}$, we will focus on the restricted domain $\A$. Under this restriction, the magnitude of a point in $\mathbb{C}^2$ is calculable from  its image (Lemma \ref{lem:magnitude}), the range is exactly $B$ (Proposition \ref{prop:image_ball}), $\alpha$ only identifies points if they differ by multiplication by a scalar in $S^1$ (Corollary \ref{cor:circle_action}), and orthogonality of vectors in $\mathbb{C}^2$ is preserved (Theorem \ref{thm:orthogonality}). Finally, we show that $\alpha$ extends continuously to $\cp$ (Theorem \ref{thm:projective_extension}).

\begin{lemma}\label{lem:magnitude}
Let $(u,v)\in\A$ and let $\alpha(u,v)=(x,y,z)$. Then $\alpha(\A)\subset B$ and
\[
|u|^2+|v|^2=\frac{x^2+y^2+z^2}{x-x^2-y^2-z^2}.
\]
\end{lemma}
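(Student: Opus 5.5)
Write $N=1+|u|^2+|v|^2$ and $r=|u|^2+|v|^2$, so $N=1+r$. The plan is a direct computation: express $x$ and $x^2+y^2+z^2$ in terms of $r$ and $N$, then read off both the inequality defining $B$ and the formula for $r$.

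First, by definition $x=|u|^2/N$. Under the identification $\mathbb{R}^3=\mathbb{R}\times\mathbb{C}$ we have $y+iz=\bar u v/N$, so
\[
y^2+z^2=\frac{|u|^2|v|^2}{N^2},
\qquad
x^2+y^2+z^2=\frac{|u|^4+|u|^2|v|^2}{N^2}=\frac{|u|^2\,r}{N^2}.
\]
The key simplification is the factorization $|u|^4+|u|^2|v|^2=|u|^2(|u|^2+|v|^2)$.

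Next, compute
\[
x-(x^2+y^2+z^2)=\frac{|u|^2N-|u|^2 r}{N^2}=\frac{|u|^2(N-r)}{N^2}=\frac{|u|^2}{N^2}.
\]
Since $u\neq 0$ on $\A$, this quantity is strictly positive. This gives two conclusions.
\begin{enumerate}
\item Positivity of $x-x^2-y^2-z^2$ is equivalent to $(x-\tfrac12)^2+y^2+z^2<\tfrac14$, which says exactly that $\alpha(u,v)\in B$. Hence $\alpha(\A)\subset B$.
\item The denominator in the claimed formula is nonzero, so we may divide. Taking the ratio of the two displayed expressions cancels $|u|^2/N^2$ and leaves $r=|u|^2+|v|^2$, which is the claimed identity.
\end{enumerate}

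There is no real obstacle. The only points needing care are the factorization of $x^2+y^2+z^2$ noted above and the use of the hypothesis $u\neq0$, which guarantees both strict membership in the open ball and a nonzero denominator.
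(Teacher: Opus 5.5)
Your proof is correct and is essentially the paper's direct computation, built on the same factorization $|u|^4+|u|^2|v|^2=|u|^2(|u|^2+|v|^2)$. The only cosmetic difference is that you compute $x-x^2-y^2-z^2=|u|^2/(1+|u|^2+|v|^2)^2$ explicitly (the $w=1$ case of the identity in the proof of Theorem~\ref{thm:projective_extension}). The paper instead derives $x^2+y^2+z^2=(x-x^2-y^2-z^2)(|u|^2+|v|^2)$ and reads off positivity from that identity.
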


\begin{proof}
By the definition of $\alpha$,
\[
x=\frac{|u|^2}{1+|u|^2+|v|^2}, \qquad y^2+z^2 = \frac{|uv|^2}{(1+|u|^2+|v|^2)^2}.
\]
Therefore,
\[
x^2+y^2+z^2=\frac{|u|^4}{(1+|u|^2+|v|^2)^2} + \frac{|uv|^2}{(1+|u|^2+|v|^2)^2}.
\]
The two summands on the right-hand side share a common factor of $|u|^2/(1+|u|^2+|v|^2)$. Pulling this factor out yields
\begin{align*}
x^2+y^2+z^2 &= \left(\frac{|u|^2}{1+|u|^2+|v|^2}\right)\left(\frac{|u|^2+|v|^2}{1+|u|^2+|v|^2}\right)\\
&= x \frac{|u|^2+|v|^2}{1+|u|^2+|v|^2}.
\end{align*}
The second equality comes from substituting back for $x$. After clearing denominators, we can manipulate the above equation to get
\begin{equation}\label{eq:norm}
x^2+y^2+z^2 = (x-x^2-y^2-z^2)(|u|^2+|v|^2).
\end{equation}
Since $u\neq 0$ and hence $x\neq 0$, it follows that $x-x^2-y^2-z^2 > 0$, which is a defining inequality for $B$. Dividing both sides of Equation \eqref{eq:norm} by $x-x^2-y^2-z^2$ gives the claimed formula for $|u|^2+|v|^2$.
\end{proof}

The hypothesis that $u\neq 0$ is essential. If $u=0$, then $\alpha(0,v)=(0,0,0)$ for every $v\in\mathbb{C}$, so the norm of $(0,v)$ cannot be recovered from the image.

\begin{thm}\label{thm:uv_formula}
Let $(u,v)\in\A$ and $\alpha(u,v)=(x,y,z)$. Then there exists $\lambda\in S^1$ such that
\[
(u,v)=\left(\frac{x\lambda}{\sqrt{x-x^2-y^2-z^2}},\frac{(y+iz)\lambda}{\sqrt{x-x^2-y^2-z^2}}\right).
\]
\end{thm}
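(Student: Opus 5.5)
The plan is to compute the quantity $x-x^2-y^2-z^2$ explicitly in terms of $u$ and $v$, and then read off $u$ and $v$ from $x$ and $y+iz$. Write $N=1+|u|^2+|v|^2$. From the definition of $\alpha$ we have $x=|u|^2/N$ and $y+iz=\bar u v/N$. By Equation \eqref{eq:norm} in the proof of Lemma \ref{lem:magnitude}, we also have $x^2+y^2+z^2 = x\,(|u|^2+|v|^2)/N$. Therefore
\[
x-x^2-y^2-z^2 = x\left(1-\frac{|u|^2+|v|^2}{N}\right)=\frac{x}{N}=\frac{|u|^2}{N^2}.
\]
Since $u\neq 0$, this quantity is positive, consistent with Lemma \ref{lem:magnitude}. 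Taking square roots gives
\[
\sqrt{x-x^2-y^2-z^2}=\frac{|u|}{N}.
\]

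Next, define $\lambda=u/|u|\in S^1$, which is legitimate because $u\neq0$. For the first coordinate we compute
\[
\frac{x\lambda}{\sqrt{x-x^2-y^2-z^2}}=\frac{|u|^2/N}{|u|/N}\cdot\frac{u}{|u|}=u.
\]
For the second coordinate, using $\bar u\lambda = \bar u u/|u| = |u|$, we get
\[
\frac{(y+iz)\lambda}{\sqrt{x-x^2-y^2-z^2}}=\frac{\bar u v\,\lambda/N}{|u|/N}=\frac{|u|\,v}{|u|}=v.
\]
This establishes the claimed formula.

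The only real care needed is in the first step: the simplification of $x-x^2-y^2-z^2$ to $|u|^2/N^2$, and choosing the positive square root. There is no genuine obstacle beyond this, since the proof amounts to direct verification once $\lambda$ is chosen as the phase of $u$.
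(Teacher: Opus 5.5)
Your proof is correct and is essentially the paper's argument: both hinge on the identity $x-x^2-y^2-z^2=|u|^2/(1+|u|^2+|v|^2)^2$. You derive it directly and then verify the formula forward with the explicit choice $\lambda=u/|u|$, whereas the paper solves for $|u|=x/\sqrt{x-x^2-y^2-z^2}$ via the norm formula of Lemma~\ref{lem:magnitude} and then recovers $v$ from the ratio $v/u=(y+iz)/x$. One small citation point: the relation $x^2+y^2+z^2=x(|u|^2+|v|^2)/(1+|u|^2+|v|^2)$ that you cite is the display just before \eqref{eq:norm}, not \eqref{eq:norm} itself, though the two are equivalent by one line of algebra.
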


\begin{proof}
Without further mention, we will use the fact that $|u|\neq 0$ and therefore $x>0$. By the definition of $\alpha$, $|u|^2/(1+|u|^2+|v|^2) = x$. Clearing denominators and applying Lemma \ref{lem:magnitude} gives us
\begin{align*}
|u|^2 &= x(1+|u|^2+|v|^2)\\
&= x\left(1 + \frac{x^2+y^2+z^2}{x-x^2-y^2-z^2}\right)\\
&= \frac{x^2}{x-x^2-y^2-z^2}.
\end{align*}
This, after taking square roots, becomes
\[
|u| = \frac{|x|}{\sqrt{x-x^2-y^2-z^2}} = \frac{x}{\sqrt{x-x^2-y^2-z^2}}.
\]
Therefore, there exists some $\lambda \in S^1$ such that
\begin{equation}\label{eq:u_formula}
u = \frac{x\lambda}{\sqrt{x-x^2-y^2-z^2}}.
\end{equation}
For the $v$-coordinate, we observe that, by the definition of $\alpha$,
\[
\frac{y+iz}{x} = \left(\frac{\bar{u}v}{1+|u|^2+|v|^2}\right)\bigg/\left(\frac{|u|^2}{1+|u|^2+|v|^2}\right) = \frac{v}{u}.
\]
Multiplying through by $u$ and then substituting in Equation \eqref{eq:u_formula} gives
\begin{align*}
v &= \frac{y+iz}{x} u\\
&= \frac{y+iz}{x}\frac{x\lambda}{\sqrt{x-x^2-y^2-z^2}}\\
&= \frac{(y+iz)\lambda}{\sqrt{x-x^2-y^2-z^2}},
\end{align*}
as claimed.
\end{proof}

\begin{prop}\label{prop:image_ball}
The restriction $\alpha: \A\to B$ is surjective. In particular, $\alpha(\A)=B$.
\end{prop}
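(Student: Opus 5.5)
Lemma \ref{lem:magnitude} already gives the inclusion $\alpha(\A)\subset B$, so only surjectivity needs proof. I would use Theorem \ref{thm:uv_formula} to guess an explicit preimage. Given $(x,y,z)\in B$, set $D=x-x^2-y^2-z^2$, which is positive because $B$ is defined by $D>0$. Take $\lambda=1$ and define
\[
u=\frac{x}{\sqrt{D}},\qquad v=\frac{y+iz}{\sqrt{D}}.
\]
Then I would check directly that $\alpha(u,v)=(x,y,z)$.

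First I need $u\neq 0$, so that $(u,v)\in\A$. The condition $D>0$ gives $x>x^2+y^2+z^2\ge 0$, hence $x>0$ and $u\neq0$.

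Next I compute the normalizing factor:
\[
1+|u|^2+|v|^2=\frac{D+x^2+y^2+z^2}{D}=\frac{x}{D}.
\]
From this, the first coordinate of $\alpha(u,v)$ is
\[
\frac{|u|^2}{1+|u|^2+|v|^2}=\frac{x^2/D}{x/D}=x .
\]
The second coordinate is
\[
\frac{\bar u v}{1+|u|^2+|v|^2}=\frac{x(y+iz)/D}{x/D}=y+iz ,
\]
using $\bar u=u$ since $u$ is real. Together these give $\alpha(u,v)=(x,y,z)$.

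There is no real obstacle here. The only subtle point is the positivity of $x$, which is needed both so that the preimage lies in $\A$ and so that the division by $x$ is legitimate; it follows immediately from the defining inequality of $B$.
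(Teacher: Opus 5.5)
Your proof is correct and matches the paper's: it uses the same explicit preimage $u=x/\sqrt{x-x^2-y^2-z^2}$, $v=(y+iz)/\sqrt{x-x^2-y^2-z^2}$ and the same computation $1+|u|^2+|v|^2=x/(x-x^2-y^2-z^2)$. The paper leaves implicit that $x>0$, so that the preimage lies in $\mathbb{C}^*\times\mathbb{C}$; your explicit check of this is worth keeping.
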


\begin{proof}
By Lemma \ref{lem:magnitude}, $\alpha(\A)\subset B$. Conversely, let $(x,y,z)\in B$ and define
\[
u=\frac{x}{\sqrt{x-x^2-y^2-z^2}},
\qquad
v=\frac{y+iz}{\sqrt{x-x^2-y^2-z^2}}.
\]
Then
\[
1+|u|^2+|v|^2=1+\frac{x^2+y^2+z^2}{x-x^2-y^2-z^2}=\frac{x}{x-x^2-y^2-z^2}.
\]
Therefore
\[
\frac{|u|^2}{1+|u|^2+|v|^2}=\frac{x^2/(x-x^2-y^2-z^2)}{x/(x-x^2-y^2-z^2)}=x
\]
and
\[
\frac{\bar{u}v}{1+|u|^2+|v|^2}=\frac{x(y+iz)/(x-x^2-y^2-z^2)}{x/(x-x^2-y^2-z^2)}=y+iz.
\]
Thus $\alpha(u,v)=(x,y,z)$, so every point of $B$ lies in the image.
\end{proof}

\begin{cor}\label{cor:circle_action}
Let $(u,v),(u',v')\in\A$. Then $\alpha(u,v)=\alpha(u',v')$ if and only if there exists $\lambda\in S^1$ such that
\[
(u',v')=(\lambda u,\lambda v).
\]
\end{cor}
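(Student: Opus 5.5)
The plan is to prove the two directions separately, using the definition of $\alpha$ for the easy one and Theorem \ref{thm:uv_formula} for the other.

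\emph{The ``if'' direction.} Suppose $(u',v')=(\lambda u,\lambda v)$ with $|\lambda|=1$. Then $|u'|=|u|$ and $|v'|=|v|$, so the denominator $1+|u|^2+|v|^2$ is unchanged. The numerator of the first coordinate, $|u|^2$, is also unchanged. For the second coordinate,
\[
\overline{\lambda u}\,\lambda v=|\lambda|^2\bar u v=\bar u v .
\]
Hence $\alpha(u',v')=\alpha(u,v)$. Note that $\lambda u\neq 0$, so $(u',v')$ does lie in $\A$.

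\emph{The ``only if'' direction.} Suppose $\alpha(u,v)=\alpha(u',v')=(x,y,z)$. Set
\[
w=\left(\frac{x}{\sqrt{x-x^2-y^2-z^2}},\frac{y+iz}{\sqrt{x-x^2-y^2-z^2}}\right).
\]
This is well defined because $(x,y,z)\in B$ by Lemma \ref{lem:magnitude}, so the radicand is positive. Theorem \ref{thm:uv_formula}, applied to each of the two points, gives $\mu,\mu'\in S^1$ with $(u,v)=\mu w$ and $(u',v')=\mu' w$. Taking $\lambda=\mu'\bar\mu\in S^1$, we get
\[
(u',v')=\mu'\bar\mu\,\mu w=\lambda(u,v).
\]

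There is no real obstacle here. The only point to be careful about is that Theorem \ref{thm:uv_formula} writes each preimage as a unit scalar times the same vector $w$, which depends only on $(x,y,z)$. This is what lets the two phases be compared. It also uses that $S^1$ is a group, so that $\mu'\bar\mu$ again has modulus one.
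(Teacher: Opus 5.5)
Your proof is correct and takes essentially the same approach as the paper. The ``if'' direction is read off from the definition of $\alpha$. For the converse, you apply Theorem~\ref{thm:uv_formula} to both points to write them as unit scalars $\mu,\mu'$ times a common vector determined by $(x,y,z)$, and then take $\lambda=\mu'\bar\mu=\mu'/\mu$.
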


\begin{proof}
If $(u',v')=(\lambda u,\lambda v)$ with $\lambda\in S^1$, then the fact that $\alpha(u',v')=\alpha(u,v)$ follows immediately from the definition of $\alpha$. Conversely, suppose $\alpha(u,v)=\alpha(u',v')=(x,y,z)$. By Theorem \ref{thm:uv_formula}, there exist $\mu, \mu'\in S^1$ such that
\[
(u,v)=\left(\frac{x\mu}{\sqrt{x-x^2-y^2-z^2}},\frac{(y+iz)\mu}{\sqrt{x-x^2-y^2-z^2}}\right)
\]
and
\[
(u',v')=\left(\frac{x\mu'}{\sqrt{x-x^2-y^2-z^2}},\frac{(y+iz)\mu'}{\sqrt{x-x^2-y^2-z^2}}\right).
\]
Hence $(u',v')/\mu'=(u,v)/\mu$. Letting $\lambda = \mu'/\mu\in S^1$ completes the proof.
\end{proof}

\begin{thm}\label{thm:orthogonality}
Let $(u,v),(u',v')\in\A$ and
\[
\alpha(u,v)=(x,y,z),\qquad \alpha(u',v')=(x',y',z').
\]
Then $\langle (u,v),(u',v')\rangle=0$ if and only if $(x,y,z)\cdot (x',y',z')=0$ and the vectors $(y,z)$ and $(y',z')$ are linearly dependent over $\mathbb{R}$.
\end{thm}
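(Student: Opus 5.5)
The plan is to write both conditions on the right-hand side as the real and imaginary parts of a single complex number. That number will turn out to be a nonzero multiple of the Hermitian inner product. I use the convention $\langle (u,v),(u',v')\rangle = u\bar{u}'+v\bar{v}'$; the other convention differs by complex conjugation and does not change whether the product vanishes.

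Set $N=1+|u|^2+|v|^2$ and $N'=1+|u'|^2+|v'|^2$, both positive. Identify $(y,z)$ with $y+iz=\bar{u}v/N$ and $(y',z')$ with $y'+iz'=\bar{u}'v'/N'$, as in the definition of $\alpha$. Consider the complex number
\[
w = x x' + (y+iz)\overline{(y'+iz')} = \frac{|u|^2|u'|^2 + \bar{u}v\,u'\bar{v}'}{NN'}.
\]
First I check two elementary identities for its real and imaginary parts:
\[
\Real w = xx'+yy'+zz' = (x,y,z)\cdot(x',y',z'),
\]
\[
\Imag w = \Imag\bigl((y+iz)(y'-iz')\bigr) = zy'-yz'.
\]
The first holds because $xx'$ is real. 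The quantity $zy'-yz'$ is, up to sign, the determinant of the $2\times 2$ matrix with rows $(y,z)$ and $(y',z')$. So it vanishes exactly when these two vectors are linearly dependent over $\mathbb{R}$. Consequently, the right-hand side of Theorem \ref{thm:orthogonality} is equivalent to the single complex equation $w=0$.

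Next I relate $w$ to the inner product. A direct expansion gives
\[
\bar{u}u'\,\langle (u,v),(u',v')\rangle = \bar{u}u'(u\bar{u}'+v\bar{v}') = |u|^2|u'|^2+\bar{u}v\,u'\bar{v}'.
\]
Therefore
\[
w=\frac{\bar{u}u'}{NN'}\langle (u,v),(u',v')\rangle.
\]
Because both points lie in $\A$, we have $u\neq0$ and $u'\neq0$, so the prefactor $\bar{u}u'/(NN')$ is nonzero. Hence $w=0$ if and only if $\langle (u,v),(u',v')\rangle=0$, which completes the equivalence.

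There is no serious obstacle here; the only delicate point is choosing the combination $\bar{u}u'\langle\cdot,\cdot\rangle$. One could instead work with the inverse formula of Theorem \ref{thm:uv_formula}. That would introduce the phases $\lambda,\lambda'$, and one would have to observe that $\lambda\bar{\lambda}'$ is a unit and therefore does not affect vanishing. The direct computation above avoids this, and it uses the hypothesis $u,u'\neq0$ in exactly one place. This matches the remark after Lemma \ref{lem:magnitude}: when $u=0$ the statement fails, because $\alpha(0,v)$ is the origin.
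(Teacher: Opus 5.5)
Your proof is correct and follows the paper's argument. Both proofs reduce the right-hand side to the vanishing of the single complex number $xx'+(y+iz)\overline{(y'+iz')}$, whose real and imaginary parts are the dot product and the $2\times 2$ determinant, and then observe that this number is a nonzero multiple of $\langle (u,v),(u',v')\rangle$. The only cosmetic difference is the source of the scalar: the paper gets it from Theorem \ref{thm:uv_formula} by writing $(u,v)=a(x,y+iz)$, while you read it directly off the definition as $(x,y+iz)=\frac{\bar u}{N}(u,v)$, which avoids the phase $\lambda$.
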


\begin{proof}
By Theorem \ref{thm:uv_formula}, there exist nonzero complex numbers $a, a'\in\mathbb{C}^*$ such that
\[
(u,v)=a(x,y+iz), \qquad (u',v')=a'(x',y'+iz').
\]
Using the convention that the Hermitian inner product is linear in the first variable and conjugate linear in the second, we obtain
\[
\langle (u,v),(u',v')\rangle=a\overline{a'}\left(xx'+(y+iz)(y'-iz')\right).
\]
The scalar in front is nonzero, so the inner product vanishes if and only if
\[
xx'+(y+iz)(y'-iz')=0.
\]
Expanding the second term gives
\[
xx'+yy'+zz'+i(y'z-yz')=0.
\]
Thus the Hermitian inner product is zero if and only if
\[
(x,y,z)\cdot(x',y',z')=0
\]
and
\[
y'z-yz'=0.
\]
The last condition is equivalent to the linear dependence of $(y,z)$ and $(y',z')$ over $\mathbb{R}$.
\end{proof}

As a consequence, orthogonality in $\mathbb{C}^2$ forces orthogonality of the images in $\mathbb{R}^3$. This implication continues to hold when $u=0$ or $u'=0$, since such vectors map to $(0,0,0)$.

\begin{thm}\label{thm:projective_extension}
The function $\alpha: \mathbb{C}^2\to \mathbb{R}\times\mathbb{C}$ extends continuously to a function $\widetilde{\alpha}: \cp\rightarrow \overline{B}$ given by
\[
\widetilde{\alpha}([u:v:w])=\left(\frac{|u|^2}{|u|^2+|v|^2+|w|^2}, \frac{\overline{u}v}{|u|^2+|v|^2+|w|^2}\right).
\]
Moreover, the line at infinity, $L_\infty=\{[u:v:w]\in\cp:w=0\}$, maps onto $\partial B$.
\end{thm}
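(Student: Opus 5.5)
The plan has four parts: check that $\widetilde{\alpha}$ is well defined and extends $\alpha$, get continuity from a quotient-map argument, show the image lies in $\overline{B}$, and then compute the image of $L_\infty$.

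\emph{Well-definedness and extension.} Replacing $(u,v,w)$ by $(\mu u,\mu v,\mu w)$ with $\mu\in\mathbb{C}^*$ multiplies each numerator $|u|^2$ and $\bar u v$ by $|\mu|^2$. The denominator $|u|^2+|v|^2+|w|^2$ is multiplied by the same factor, so the quotient is unchanged. The denominator never vanishes on $\cp$, so $\widetilde{\alpha}$ is well defined. Restricting to the affine chart $w=1$, identified with $\mathbb{C}^2$ via $(u,v)\mapsto[u:v:1]$, the formula becomes exactly $\alpha(u,v)$. Hence $\widetilde{\alpha}$ extends $\alpha$.

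\emph{Continuity.} The formula defines a continuous function on $\mathbb{C}^3\setminus\{0\}$ that is constant on the fibers of the quotient map $\mathbb{C}^3\setminus\{0\}\to\cp$. By the universal property of the quotient topology, it therefore descends to a continuous function on $\cp$.

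\emph{Image in $\overline{B}$.} Put $N=|u|^2+|v|^2+|w|^2$ and $(x,y,z)=\widetilde{\alpha}([u:v:w])$. I will redo the computation of Lemma \ref{lem:magnitude}:
\[
x^2+y^2+z^2=\frac{|u|^2(|u|^2+|v|^2)}{N^2}=x\cdot\frac{|u|^2+|v|^2}{N}.
\]
This gives
\[
x-x^2-y^2-z^2=x\cdot\frac{|w|^2}{N}\ge 0,
\]
so the image lies in $\overline{B}$. (Alternatively, $\overline{B}$ is the closure of $\alpha(\mathbb{C}^2)\subset B$ and $\mathbb{C}^2$ is dense in $\cp$, so continuity already gives this inclusion.)

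\emph{The line at infinity.} When $w=0$, the identity above gives $x-x^2-y^2-z^2=0$, so $\widetilde{\alpha}(L_\infty)\subset\partial B$. This is the step that needs an actual construction. For surjectivity, take $(x,y,z)\in\partial B$. If $x=0$, the equation $x=x^2+y^2+z^2$ forces $y=z=0$, and the point $[0:1:0]$ maps to the origin. If $x>0$, I will take $[u:v:0]=[x:y+iz:0]$. Then
\[
N=x^2+y^2+z^2=x,
\]
so the image is
\[
\left(\frac{x^2}{x},\frac{x(y+iz)}{x}\right)=(x,y+iz),
\]
as required. This mirrors the construction in Proposition \ref{prop:image_ball}, where the $w$-coordinate plays the role of the vanishing quantity $\sqrt{x-x^2-y^2-z^2}$.
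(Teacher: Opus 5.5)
Your proof is correct and follows essentially the same route as the paper. Both use projective invariance of the formula, the identity $x-x^2-y^2-z^2=|u|^2|w|^2/(|u|^2+|v|^2+|w|^2)^2\ge 0$, and the explicit preimages $[0:1:0]$ and $[x:y+iz:0]$ for points of $\partial B$; your quotient-topology argument just spells out the continuity the paper calls immediate. One harmless slip in your parenthetical alternative: $\alpha(\mathbb{C}^2)$ is not contained in $B$, since $\alpha(0,v)=(0,0,0)\in\partial B$. That argument only needs $\alpha(\mathbb{C}^2)=B\cup\{(0,0,0)\}\subset\overline{B}$, which holds.
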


\begin{proof}
The formula for $\widetilde{\alpha}$ is well defined on projective classes because multiplying $(u,v,w)$ by some $\xi\in\mathbb{C}^*$ multiplies each numerator and the denominator in the formula for $\widetilde{\alpha}$ by the same factor, $|\xi|^2$. Continuity is immediate, and on the affine chart $w=1$ the formula reduces to the original definition of $\alpha$.

Write $\widetilde{\alpha}([u:v:w])=(x,y,z)$. Then a direct calculation, left to the reader, gives us
\[
x-x^2-y^2-z^2=\frac{|u|^2|w|^2}{\left(|u|^2+|v|^2+|w|^2\right)^2}\ge 0.
\]
Therefore, the image lies in $\overline{B}$. If $w=0$, then equality holds and the image lies on $\partial B$.

Conversely, let $(x,y,z)\in\partial B$, so $x^2+y^2+z^2=x$. If $x=0$, then we must have $y=0$ and $z=0$. In this case we observe that $\widetilde{\alpha}([0:1:0])=(0, 0, 0)$ and $[0:1:0]\in L_\infty$. On the other hand, if $x>0$, then setting $[u:v:w]=[x:y+iz:0]$ gives
\[
\widetilde{\alpha}([x:y+iz:0])=\left(\frac{x^2}{x^2+y^2+z^2},\frac{x(y+iz)}{x^2+y^2+z^2}\right)=\left(\frac{x^2}{x},\frac{x(y+iz)}{x}\right)=(x,y+iz),
\]
where we used the fact that $x$, $y$, and $z$ are real. In either case, we have that every boundary point of $B$ is in the image of $L_\infty$.
\end{proof}

\section{Geometry of the image}\label{section:geometry}

For the remainder of the paper, except where noted, we let $f(u,v)\in\mathbb{C}[u,v]$ denote an irreducible polynomial of degree $n$, and $X=\mathbf{V}(f)\subset \mathbb{C}^2$. We further assume that $X$ is not a line through the origin. In this section we study the geometry of $\alpha(X)$ by analyzing how $X$ meets complex lines through the origin.

\subsection{Root counting and separating line segments}

\begin{defn}
For $(x,y,z)\in B$ and $\lambda\in\mathbb{C}$, define
\[
\beta(x,y,z,\lambda)=\left(\frac{x\lambda}{\sqrt{x-x^2-y^2-z^2}},\frac{(y+iz)\lambda}{\sqrt{x-x^2-y^2-z^2}}\right).
\]
Additionally, for convenience we introduce the notation
\[
R(x,y,z)=\frac{x}{\sqrt{x-x^2-y^2-z^2}}.
\]
\end{defn}

For a fixed point $(x,y,z)\in B$, the map $\lambda\mapsto \beta(x,y,z,\lambda)$ parameterizes the complex line through the origin whose projection under $\alpha$ contains $(x,y,z)$. By Theorem \ref{thm:uv_formula}, $R(x,y,z)$ is just the modulus of any $u$-coordinate in the preimage of $(x,y,z)$ under $\alpha$.

\begin{rem}\label{rem:comp_ident}
If $\lambda\in  S^1$, then
\[
\alpha\left(\beta(x,y,z,\lambda)\right)=(x,y,z).
\]
Indeed, by Corollary \ref{cor:circle_action}, $\alpha(\beta(x, y, z, \lambda))$ is independent of the choice of $\lambda\in S^1$. Letting $\lambda=1$ and then performing the same calculation as in the proof of Proposition \ref{prop:image_ball} yields the above equality.
\end{rem}

\begin{defn}
For $\mathbf{p}\in B$, define $N_f(\mathbf{p})$ to be the number of zeros, counted with multiplicity, of the polynomial
\[
\lambda\mapsto f\left(\beta(\mathbf{p},\lambda)\right)
\]
in the open unit disk $|\lambda|<1$.
\end{defn}

Because $f$ has degree $n$, the polynomial $f(\beta(\mathbf{p},\lambda))$ has degree at most $n$ in $\lambda$. It is not identically zero, because if it were, the entire complex line parameterized by $\beta(\mathbf{p},\lambda)$ would lie in $X$, and irreducibility would force $X$ itself to be a line through the origin, contradicting our assumption on $X$. Thus $N_f(\mathbf{p})\in\{0,1,\dots,n\}$.

\begin{prop}\label{prop:alternate_count}
For $\mathbf{p}=(x,y,z)\in B$, $N_f(\mathbf{p})$ is also equal to the number of roots, counted with multiplicity, of the polynomial
\[
u\mapsto f\left(u,\frac{y+iz}{x}u\right)
\]
in the disk $|u|<R(\mathbf{p})$.
\end{prop}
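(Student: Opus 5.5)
The plan is to compare the two polynomials through the linear substitution $u=R(\mathbf{p})\lambda$. Write $s=\sqrt{x-x^2-y^2-z^2}>0$, which is valid because $\mathbf{p}\in B$. Then $R(\mathbf{p})=x/s$, and since $x>0$ on $B$ we have $R(\mathbf{p})>0$. (Indeed $x-x^2-y^2-z^2>0$ forces $x>x^2\ge 0$.) By definition of $\beta$,
\[
\beta(\mathbf{p},\lambda)=\left(\frac{x\lambda}{s},\frac{(y+iz)\lambda}{s}\right)=\left(R(\mathbf{p})\lambda,\ \frac{y+iz}{x}\,R(\mathbf{p})\lambda\right).
\]
Define $g(u)=f\bigl(u,\tfrac{y+iz}{x}u\bigr)$. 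This gives the identity of polynomials in $\lambda$:
\[
f(\beta(\mathbf{p},\lambda))=g\bigl(R(\mathbf{p})\lambda\bigr).
\]

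Next I would observe that $\lambda\mapsto R(\mathbf{p})\lambda$ is a bijection of $\mathbb{C}$, and it carries the open unit disk $|\lambda|<1$ onto the disk $|u|<R(\mathbf{p})$. It remains to check that multiplicities are preserved. Suppose $u_0$ is a root of $g$ of multiplicity $m$, so $g(u)=(u-u_0)^m h(u)$ with $h(u_0)\neq0$. Then
\[
g(R\lambda)=R^m(\lambda-u_0/R)^m h(R\lambda),
\]
so $\lambda_0=u_0/R$ is a root of $f(\beta(\mathbf{p},\cdot))$ of the same multiplicity $m$. The converse follows by the same argument applied in the other direction.

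I also need both polynomials to be nonzero so that the root counts make sense. The paper has already shown that $f(\beta(\mathbf{p},\lambda))$ is not identically zero. By the identity above, $g$ is then not identically zero either. Summing multiplicities over the corresponding disks gives that the number of roots of $g$ in $|u|<R(\mathbf{p})$ equals $N_f(\mathbf{p})$.

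No step is a real obstacle. The only points needing care are that $x>0$, so that both $(y+iz)/x$ and the scaling are well defined, and that the rescaling preserves multiplicity. Both are straightforward.
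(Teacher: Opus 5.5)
Your proposal is correct and follows the paper's proof. Both write $\beta(\mathbf{p},\lambda)=\bigl(R(\mathbf{p})\lambda,\tfrac{y+iz}{x}R(\mathbf{p})\lambda\bigr)$ and substitute $u=R(\mathbf{p})\lambda$, which carries $|\lambda|<1$ onto $|u|<R(\mathbf{p})$; you additionally spell out details the paper leaves implicit, namely that $x>0$ on $B$, that the rescaling preserves multiplicities, and that both polynomials are nonzero.
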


\begin{proof}
By definition,
\[
\beta(\mathbf{p},\lambda)=\left(R(\mathbf{p})\lambda,\frac{y+iz}{x}R(\mathbf{p})\lambda\right).
\]
Setting $u=R(\mathbf{p})\lambda$ converts the equation
\[
f\left(\beta(\mathbf{p},\lambda)\right)=0
\]
into
\[
f\left(u,\frac{y+iz}{x}u\right)=0,
\]
and the condition $|\lambda|<1$ is equivalent to $|u|<R(\mathbf{p})$.
\end{proof}

\begin{lemma}\label{lem:root_cont}
The function
\[
N_f: B\setminus \alpha(X)\rightarrow \{0,1,\dots,n\}
\]
is locally constant and therefore continuous.
\end{lemma}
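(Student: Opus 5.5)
The plan is to apply the argument principle (or Rouché's theorem) on the unit circle to the polynomial family $g_{\mathbf{p}}(\lambda)=f(\beta(\mathbf{p},\lambda))$, whose coefficients depend continuously on $\mathbf{p}$. Because $R(\mathbf{p})$ and $(y+iz)/\sqrt{x-x^2-y^2-z^2}$ are continuous on $B$ (the radicand is positive there), expanding $f(\beta(\mathbf{p},\lambda))$ in powers of $\lambda$ gives coefficients that are polynomials in these continuous functions. Hence $(\mathbf{p},\lambda)\mapsto g_{\mathbf{p}}(\lambda)$ is jointly continuous on $B\times\mathbb{C}$.

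First I will show that for $\mathbf{p}\in B\setminus\alpha(X)$ the polynomial $g_{\mathbf{p}}$ has no zeros on the unit circle $|\lambda|=1$. Suppose $g_{\mathbf{p}}(\lambda)=0$ with $|\lambda|=1$. Then $\beta(\mathbf{p},\lambda)\in X$. Since $x>0$ on $B$, the first coordinate of $\beta(\mathbf{p},\lambda)$ is nonzero, so $\beta(\mathbf{p},\lambda)\in\A$. By Remark \ref{rem:comp_ident}, $\alpha(\beta(\mathbf{p},\lambda))=\mathbf{p}$, which would give $\mathbf{p}\in\alpha(X)$, a contradiction.

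Next, fix $\mathbf{p}_0\in B\setminus\alpha(X)$ and set $m=\min_{|\lambda|=1}|g_{\mathbf{p}_0}(\lambda)|>0$. This minimum is positive by the previous step and compactness of the circle. By joint continuity and compactness of $S^1$, the function $g_{\mathbf{p}}$ converges uniformly to $g_{\mathbf{p}_0}$ on $|\lambda|=1$ as $\mathbf{p}\to\mathbf{p}_0$. So there is a neighborhood $U$ of $\mathbf{p}_0$ on which $|g_{\mathbf{p}}-g_{\mathbf{p}_0}|<m\le|g_{\mathbf{p}_0}|$ holds on the circle. Intersecting with the open ball $B$, I can take $U\subset B$; note that $U$ need not avoid $\alpha(X)$, but this is harmless because we only need constancy on $U\setminus\alpha(X)$. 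Rouché's theorem then gives that $g_{\mathbf{p}}$ and $g_{\mathbf{p}_0}$ have the same number of zeros, with multiplicity, in the open unit disk, that is, $N_f(\mathbf{p})=N_f(\mathbf{p}_0)$ for all $\mathbf{p}\in U$. We need $g_{\mathbf{p}}\not\equiv0$ so that $N_f(\mathbf{p})$ is defined; this was established in the text above.

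This gives local constancy, and continuity follows immediately because the codomain is discrete. I expect the main subtle point to be the first step, which must ensure that no root lies on the boundary circle, since that is where $\mathbf{p}\notin\alpha(X)$ is used via Remark \ref{rem:comp_ident}. A second subtlety is that the degree of $g_{\mathbf{p}}$ may drop as $\mathbf{p}$ varies. Rouché's theorem avoids any issue there, because it counts only the zeros inside the disk and does not need the degree to be constant.
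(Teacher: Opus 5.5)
Your proposal is correct and follows essentially the same route as the paper. You exclude zeros of $g_{\mathbf{p}}$ on $S^1$ via Remark~\ref{rem:comp_ident}, take $m=\min_{|\lambda|=1}|g_{\mathbf{p}_0}(\lambda)|>0$, make $g_{\mathbf{p}}$ uniformly close to $g_{\mathbf{p}_0}$ on the circle (the paper does this with an explicit $m/(n+1)$ bound on the coefficients), and apply Rouch\'e. One small correction: your hedge that $U$ need not avoid $\alpha(X)$ is unnecessary, because the strict inequality $|g_{\mathbf{p}}-g_{\mathbf{p}_0}|<|g_{\mathbf{p}_0}|$ on $S^1$ forces $g_{\mathbf{p}}$ to be nonvanishing there, so in fact $U\subset B\setminus\alpha(X)$, as the paper notes explicitly.
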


\begin{proof}
For $\mathbf{p}\in B$ and $\lambda \in \mathbb{C}$, we define
\[
g_{\mathbf{p}}(\lambda)=f\left(\beta(\mathbf{p},\lambda)\right)=\sum_{j=0}^n a_j(\mathbf{p})\lambda^j.
\]
Now, fix some $\mathbf{p}\in B\setminus \alpha(X)$. We begin by showing that $g_{\mathbf{p}}(\lambda)$ is nonzero on $S^1$. If $g_{\mathbf{p}}(\lambda_0)=0$ for some $|\lambda_0|=1$, then $\beta(\mathbf{p},\lambda_0)\in X$ and, as noted in Remark \ref{rem:comp_ident},
\[
\alpha\left(\beta(\mathbf{p},\lambda_0)\right)=\mathbf{p},
\]
which would imply $\mathbf{p}\in\alpha(X)$, a contradiction. Hence, $g_{\mathbf{p}}(\lambda)$ has no zeros on $S^1$. Since $|g_{\mathbf{p}}(\lambda)|$ is continuous and nonzero on the unit circle, by compactness it attains a positive minimum value,
\[
\min_{|\lambda|=1}|g_{\mathbf{p}}(\lambda)|=m>0.
\]
By the continuity of the $a_j$, there exists some $\delta>0$ such that whenever $\|\mathbf{q}-\mathbf{p}\|<\delta$,
\[
\max_{0\le j\le n}|a_j(\mathbf{q})-a_j(\mathbf{p})|<\frac{m}{n+1}.
\]
When $|\lambda|=1$, this gives
\begin{align*}
|g_{\mathbf{q}}(\lambda)-g_{\mathbf{p}}(\lambda)|
&= \left|\sum_{j=0}^n \left(a_j(\mathbf{q})\lambda^j-a_j(\mathbf{p})\lambda^j\right)\right|\\
&\leq \sum_{j=0}^n \left|a_j(\mathbf{q})-a_j(\mathbf{p})\right||\lambda^j|\\
&< \sum_{j=0}^n \frac{m}{n+1}\\
&\leq |g_{\mathbf{p}}(\lambda)|.
\end{align*}
Rouch\'e's theorem then implies that $g_{\mathbf{q}}$ and $g_{\mathbf{p}}$ have the same number of zeros in $|\lambda|<1$. In addition, $g_{\mathbf{q}}$ cannot vanish on $S^1$. If $g_{\mathbf{q}}(\lambda_0)=0$ for some $\lambda_0\in S^1$, then the above inequality would give
\[
|g_{\mathbf{p}}(\lambda_0)|=|g_{\mathbf{p}}(\lambda_0)-g_{\mathbf{q}}(\lambda_0)|<|g_{\mathbf{p}}(\lambda_0)|,
\]
which is a contradiction. Thus $\mathbf{q}\notin\alpha(X)$ as well. Therefore, $N_f$ is constant on a neighborhood of $\mathbf{p}$ inside $B\setminus\alpha(X)$.
\end{proof}

The following theorem will be a primary tool for intersecting $\alpha(X)$ with rays in Section \ref{section:visualization}.

\begin{thm}\label{thm:line_segment}
Let $\mathbf{p},\mathbf{q}\in B$. If $N_f(\mathbf{p})\neq N_f(\mathbf{q})$, then the line segment joining $\mathbf{p}$ to $\mathbf{q}$ intersects $\alpha(X)$.
\end{thm}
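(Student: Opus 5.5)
The argument is by contradiction, using connectedness together with Lemma \ref{lem:root_cont}. Suppose the closed segment $S=\{\gamma(t)=(1-t)\mathbf{p}+t\mathbf{q}: t\in[0,1]\}$ does not meet $\alpha(X)$. Since $B$ is a ball, it is convex, so $S\subset B$ and therefore $S\subset B\setminus\alpha(X)$. In particular $\mathbf{p},\mathbf{q}\notin\alpha(X)$, so both $N_f(\mathbf{p})$ and $N_f(\mathbf{q})$ fall within the setting of Lemma \ref{lem:root_cont}.

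Consider the composition $h=N_f\circ\gamma:[0,1]\to\{0,1,\dots,n\}$. By Lemma \ref{lem:root_cont}, $N_f$ is locally constant on $B\setminus\alpha(X)$, and $\gamma$ is continuous, so $h$ is locally constant on $[0,1]$. The interval $[0,1]$ is connected. The set $\{t: h(t)=h(0)\}$ is open by local constancy, and its complement is also open by the same argument, so it is all of $[0,1]$. Hence $h(1)=h(0)$, that is, $N_f(\mathbf{q})=N_f(\mathbf{p})$, contradicting the hypothesis.

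An alternative route avoids the contradiction setup: if either endpoint lies in $\alpha(X)$ the conclusion is immediate, and otherwise the argument above applies. No step here is a genuine obstacle; the substantive work (the Rouch\'e argument) is already contained in Lemma \ref{lem:root_cont}. The only points that need care are noting that $S$ stays inside $B$ by convexity, so that $N_f$ is defined along it, and that a locally constant integer-valued function on a connected set is constant.
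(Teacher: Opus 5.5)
Your proof is correct and is the paper's argument: assume the segment misses $\alpha(X)$, use convexity of $B$ to place it in $B\setminus\alpha(X)$, and then use Lemma~\ref{lem:root_cont} together with connectedness to force $N_f(\mathbf{p})=N_f(\mathbf{q})$. Pulling back to $h=N_f\circ\gamma$ on $[0,1]$ only spells out the step, stated in one line in the paper, that a locally constant function on a connected set is constant.
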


\begin{proof}
Suppose the segment $\overline{\mathbf{p}\mathbf{q}}$ does not intersect $\alpha(X)$. Since $B$ is convex, the entire segment lies in $B\setminus \alpha(X)$. By Lemma \ref{lem:root_cont}, the function $N_f$ is locally constant on this segment. Because a locally constant function on a connected set is constant, $N_f$ must take the same value at $\mathbf{p}$ and $\mathbf{q}$, contradicting our hypothesis. Therefore, $\overline{\mathbf{p}\mathbf{q}}\cap \alpha(X)\neq \emptyset$.
\end{proof}

\subsection{Star-shaped domains}

\begin{defn}
A set $S\subset\mathbb{R}^m$ is \emph{star-shaped} if there exists a point $\mathbf{p}\in S$ such that, for every $\mathbf{q}\in S$, the line segment joining $\mathbf{p}$ to $\mathbf{q}$ is contained in $S$. Such a point $\mathbf{p}$ is called a \emph{base point} of $S$.
\end{defn}

Although $\alpha(X)$ is, in general, not star-shaped, the root-counting function produces a family of nested sets that become star-shaped after adjoining $(0,0,0)$. Under modest restrictions, the union of these boundaries is precisely the image of the closure of $X$ in $\cp$ under $\widetilde{\alpha}$ (Theorem \ref{thm:projective_image}).

\begin{defn}
For each $k\in\{1,\dots,n\}$, define
\[
S_f(k)=\{\mathbf{p}\in B:N_f(\mathbf{p})<k\}.
\]
\end{defn}

\begin{prop}\label{prop:monotonicity}
Let $\mathbf{p}\in B$ and $t\in(0,1)$. Then $N_f(t\mathbf{p})\le N_f(\mathbf{p})$. If, in addition, $t\mathbf{p}\in\alpha(X)$, then the inequality is strict.
\end{prop}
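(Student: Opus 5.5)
Writing $\mathbf{p}=(x,y,z)$, the plan is to reduce both points to the same complex line and compare disks, using Proposition~\ref{prop:alternate_count}. Since $t>0$, the point $t\mathbf{p}=(tx,ty,tz)$ lies in $B$: it lies on the segment from $\mathbf{p}$ to the origin, which is in $\overline{B}$, and the open ball $B$ is convex, so every point of the half-open segment other than the origin stays in $B$. Moreover $(ty+itz)/(tx)=(y+iz)/x$, so both $N_f(\mathbf{p})$ and $N_f(t\mathbf{p})$ count zeros of the same polynomial
\[
h(u)=f\left(u,\tfrac{y+iz}{x}u\right).
\]
The only difference is the radius of the disk: $|u|<R(\mathbf{p})$ versus $|u|<R(t\mathbf{p})$.

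The key step is to compare the two radii. Set $r^2=x^2+y^2+z^2$. Then
\[
R(t\mathbf{p})^2=\frac{t^2x^2}{tx-t^2r^2}=\frac{tx^2}{x-tr^2}.
\]
The function $t\mapsto t/(x-tr^2)$ is strictly increasing on $(0,1]$ (the denominator stays positive because $x-r^2>0$). Hence $R(t\mathbf{p})<R(\mathbf{p})$ for $t\in(0,1)$. The disk for $t\mathbf{p}$ is therefore strictly contained in the disk for $\mathbf{p}$, which gives $N_f(t\mathbf{p})\le N_f(\mathbf{p})$. We already know $h$ is not identically zero, by the remark following the definition of $N_f$.

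For the strict inequality, suppose $t\mathbf{p}\in\alpha(X)$, say $\alpha(u_0,v_0)=t\mathbf{p}$ with $(u_0,v_0)\in X$. The first coordinate $tx$ of $t\mathbf{p}$ is positive, so $u_0\neq0$. By Theorem~\ref{thm:uv_formula}, $(u_0,v_0)=\beta(t\mathbf{p},\lambda)$ for some $\lambda\in S^1$. This means $u_0$ is a zero of $h$ with $|u_0|=R(t\mathbf{p})$ and $v_0=\frac{y+iz}{x}u_0$. That zero lies on the boundary circle of the smaller disk, so it is not counted by $N_f(t\mathbf{p})$. On the other hand $|u_0|=R(t\mathbf{p})<R(\mathbf{p})$, so it lies in the larger open disk and is counted by $N_f(\mathbf{p})$. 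Every zero counted for $t\mathbf{p}$ is also counted for $\mathbf{p}$, and $\mathbf{p}$ has at least this one extra zero, so $N_f(t\mathbf{p})<N_f(\mathbf{p})$.

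The main point needing care is the radius monotonicity computation together with checking that $t\mathbf{p}\in B$. Everything else is bookkeeping with Proposition~\ref{prop:alternate_count}.
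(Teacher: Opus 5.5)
Your proof is correct and follows essentially the same route as the paper. Via Proposition~\ref{prop:alternate_count}, both counts reduce to the same polynomial $f\left(u,\frac{y+iz}{x}u\right)$; the radius comparison $R(t\mathbf{p})<R(\mathbf{p})$ gives the inequality; and a point of $\alpha(X)$ at $t\mathbf{p}$ supplies a root on the circle $|u|=R(t\mathbf{p})$ that is counted only for $\mathbf{p}$. You also obtain that boundary root explicitly from Theorem~\ref{thm:uv_formula} and check that $t\mathbf{p}\in B$ (most directly from $t(x-tr^2)>0$), which fills in details the paper leaves implicit.
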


\begin{proof}
Write $\mathbf{p}=(x,y,z)$. By Proposition \ref{prop:alternate_count}, both $N_f(\mathbf{p})$ and $N_f(t\mathbf{p})$ count roots of the same polynomial
\[
u\mapsto f\left(u,\frac{y+iz}{x}u\right),
\]
because scaling $\mathbf{p}$ by $t$ does not change the ratio $(y+iz)/x$. The only difference is the radius of the disk. Namely,
\[
R(t\mathbf{p})^2=\frac{t^2x^2}{tx-t^2(x^2+y^2+z^2)}=\frac{tx^2}{x-t(x^2+y^2+z^2)}.
\]
Since $\mathbf{p}\in B$, we have
\[
0< x-(x^2+y^2+z^2) < x-t(x^2+y^2+z^2),
\]
and therefore
\[
R(t\mathbf{p})^2=\frac{tx^2}{x-t(x^2+y^2+z^2)}<\frac{x^2}{x-(x^2+y^2+z^2)}=R(\mathbf{p})^2.
\]
Thus every root counted by $N_f(t\mathbf{p})$ is also counted by $N_f(\mathbf{p})$, with multiplicity. If $t\mathbf{p}\in\alpha(X)$, then there is a root on the boundary of the disk $|u|<R(t\mathbf{p})$ which would not be counted by $N_f(t\mathbf{p})$, but would be counted by $N_f(\mathbf{p})$. Therefore, in this case, the inequality is strict.
\end{proof}

\begin{cor}
For each $k\in\{1,\dots,n\}$, the set $S_f(k)\cup\{(0,0,0)\}$ is star-shaped with base point $(0,0,0)$.
\end{cor}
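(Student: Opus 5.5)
The plan is to apply Proposition \ref{prop:monotonicity} directly, after handling the origin separately. Fix $k\in\{1,\dots,n\}$ and set $S=S_f(k)\cup\{(0,0,0)\}$. Clearly $(0,0,0)\in S$. We must show that for every $\mathbf{q}\in S$ the segment $\{t\mathbf{q}:t\in[0,1]\}$ lies in $S$.

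If $\mathbf{q}=(0,0,0)$, the segment is the single point $(0,0,0)\in S$, so there is nothing to prove. Otherwise $\mathbf{q}\in S_f(k)\subset B$. The endpoint $t=0$ gives $(0,0,0)\in S$, and the endpoint $t=1$ gives $\mathbf{q}$ itself.

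For $t\in(0,1)$, we first check that $t\mathbf{q}\in B$, so that $N_f(t\mathbf{q})$ is defined. Writing $\mathbf{q}=(x,y,z)$, we have $x>0$ and $x-(x^2+y^2+z^2)>0$. Then
\[
tx-t^2(x^2+y^2+z^2)=t\bigl(x-t(x^2+y^2+z^2)\bigr)>t\bigl(x-(x^2+y^2+z^2)\bigr)>0,
\]
which is the defining inequality of $B$; this is the same estimate already used in the proof of Proposition \ref{prop:monotonicity}. Alternatively, one can note that $B$ is convex and that $(0,0,0)\in\overline{B}$ with $\mathbf{q}\in B$, so the half-open segment lies in $B$.

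With $t\mathbf{q}\in B$ established, Proposition \ref{prop:monotonicity} gives $N_f(t\mathbf{q})\le N_f(\mathbf{q})<k$. Hence $t\mathbf{q}\in S_f(k)\subset S$, and the segment from $(0,0,0)$ to $\mathbf{q}$ lies in $S$.

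No step here is a real obstacle. The only point needing care is that the origin is not in $B$, which is exactly why it must be adjoined. So the one thing to verify is that $t\mathbf{q}$ stays in $B$ for $t\in(0,1)$, so that $N_f$ is defined there; after that the monotonicity proposition does all the work.
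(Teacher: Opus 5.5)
Your proof is correct and is essentially the paper's argument: both apply Proposition~\ref{prop:monotonicity} along the segment $t\mathbf{q}$, $t\in(0,1)$, and conclude that $N_f(t\mathbf{q})\le N_f(\mathbf{q})<k$. Your explicit check that $t\mathbf{q}\in B$, so that $N_f(t\mathbf{q})$ is defined, is a detail the paper leaves implicit in the estimate inside that proposition's proof.
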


\begin{proof}
If $S_f(k)$ is empty, then the statement is trivially true. Suppose not and fix some $\mathbf{p}\in S_f(k)$. By Proposition \ref{prop:monotonicity},
\[
N_f(t\mathbf{p})\le N_f(\mathbf{p})<k,
\]
for any $t\in(0,1)$. Hence the entire line segment from $(0,0,0)$ to $\mathbf{p}$ lies in $S_f(k)\cup\{(0,0,0)\}$. Since $\mathbf{p}\in S_f(k)$ was arbitrary, $S_f(k)\cup\{(0,0,0)\}$ is star-shaped with base point $(0,0,0)$.
\end{proof}

\begin{lemma}\label{lem:boundary_points}
The points of $\bigcup_{k=1}^n \partial S_f(k)$ that lie in $B$ are exactly $\alpha(X\cap\A)$.
\end{lemma}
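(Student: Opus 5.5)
We prove the two inclusions separately. Boundaries here are taken in $\mathbb{R}^3$, and we only consider points of $B$. Throughout we use the fact, coming from Remark \ref{rem:comp_ident} together with Theorem \ref{thm:uv_formula}, that $\mathbf{p}\in\alpha(X\cap\A)$ if and only if the polynomial $g_{\mathbf{p}}(\lambda)=f(\beta(\mathbf{p},\lambda))$ has a zero on $S^1$.

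\emph{Boundary points lie in $\alpha(X)$.} Let $\mathbf{p}\in B\cap\partial S_f(k)$ and suppose $\mathbf{p}\notin\alpha(X)$. By Lemma \ref{lem:root_cont}, $N_f$ is constant on a neighborhood $U\subset B\setminus\alpha(X)$ of $\mathbf{p}$. If $N_f(\mathbf{p})<k$, then $U\subset S_f(k)$. Otherwise $U\cap S_f(k)=\emptyset$. In either case $\mathbf{p}$ is not a boundary point, which is a contradiction. Hence $\mathbf{p}\in\alpha(X)$, and since $\mathbf{p}\in B$ excludes the origin, we get $\mathbf{p}\in\alpha(X\cap\A)$.

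\emph{Points of $\alpha(X\cap\A)$ are boundary points.} Let $\mathbf{p}\in\alpha(X\cap\A)$, so $g_{\mathbf{p}}$ has $m\ge1$ zeros (with multiplicity) on $S^1$. Let $N_0=N_f(\mathbf{p})$ be the number of zeros strictly inside the disk. We look along the ray through $\mathbf{p}$ and use Proposition \ref{prop:alternate_count}: as $t$ varies near $1$, $N_f(t\mathbf{p})$ counts the roots of one fixed polynomial $h(u)=f(u,\frac{y+iz}{x}u)$ in the disk $|u|<R(t\mathbf{p})$, and $R(t\mathbf{p})$ is strictly increasing in $t$ by the computation in Proposition \ref{prop:monotonicity}. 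First we check that $h$ is not identically zero. Its roots are isolated, so for $t<1$ close to $1$ we have $N_f(t\mathbf{p})=N_0$, while for $t>1$ close to $1$ (still in $B$, which is open) we have $N_f(t\mathbf{p})=N_0+m$. Now take $k=N_0+1\le n$. Then $t\mathbf{p}\in S_f(k)$ for $t<1$ near $1$, and $t\mathbf{p}\notin S_f(k)$ for $t>1$ near $1$. Therefore $\mathbf{p}\in\partial S_f(k)$.

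The main obstacle is the second inclusion. The key point there is to ensure the count strictly jumps across $\mathbf{p}$ in a controlled way. This needs the monotonicity of $R$ along rays, which we need for $t>1$ as well, not just $t\in(0,1)$; the same formula $R(t\mathbf{p})^2=tx^2/(x-t(x^2+y^2+z^2))$ gives it directly. It also needs the fact that the roots of $h$ on $|u|=R(\mathbf{p})$ are isolated. Once these are in place, choosing $k=N_f(\mathbf{p})+1$ finishes the argument.
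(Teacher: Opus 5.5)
Your proof is correct and follows the paper's argument. The first inclusion is identical, and for the second the paper also works along the ray through $\mathbf{p}$ with $k=N_f(\mathbf{p})+1$, obtaining the jump by applying Proposition \ref{prop:monotonicity} to a nearby point $t^{-1}\mathbf{p}\notin\alpha(X)$ (your $t>1$ monotonicity, after reparametrizing) and using $\mathbf{p}$ itself as the point of $S_f(k)$. The nonvanishing of $h$, which you announce but do not actually check, is the paper's earlier observation that $f(\beta(\mathbf{p},\lambda))\not\equiv 0$ because $X$ is irreducible and not a line through the origin.
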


\begin{proof}
Let $\mathbf{p}\in B\cap \partial S_f(k)$ and suppose $\mathbf{p}\notin \alpha(X)$. Then $\mathbf{p}\in B\setminus \alpha(X)$, so Lemma \ref{lem:root_cont} gives a neighborhood $U\subset B\setminus\alpha(X)$ on which $N_f$ is constant, say identically equal to $j$.

If $j<k$, then $U\subset S_f(k)$, so $\mathbf{p}$ is an interior point of $S_f(k)$, contradicting our assumption. If $j\ge k$, then $U\cap S_f(k)=\emptyset$, so $U$ is an open neighborhood of $\mathbf{p}$ disjoint from $S_f(k)$. Hence $\mathbf{p}\notin \partial S_f(k)$, again a contradiction. Therefore, $\mathbf{p}\in\alpha(X)$.

On the other hand, let $\mathbf{p}\in \alpha(X\cap\A)$ and $U$ be any open set containing $\mathbf{p}$. Since $X$ is not a line through the origin, the ray from $(0,0,0)\in\mathbb{R}^3$ through $\mathbf{p}$ intersects $\alpha(X)$ in finitely many points. Therefore, we can choose some $t\in (0,1)$ so that $t^{-1}\mathbf{p}\in U\setminus \alpha(X)$. By Proposition \ref{prop:monotonicity},
\[
N_f(\mathbf{p})=N_f(t t^{-1}\mathbf{p}) < N_f(t^{-1}\mathbf{p}).
\]
Note that the above inequality implies $N_f(\mathbf{p})+1 \in\{1,\ldots,n\}$. Since $N_f$ is locally constant on $B\setminus \alpha(X)$, there is some neighborhood $V$ of $t^{-1}\mathbf{p}$ where $N_f$ is constant and strictly greater than $N_f(\mathbf{p})$. It follows that
\[
V\subset S_f(N_f(\mathbf{p})+1)^c.
\]
In particular, $t^{-1}\mathbf{p} \notin S_f(N_f(\mathbf{p})+1)$. By definition, $\mathbf{p}\in S_f(N_f(\mathbf{p})+1)$, so $U$ contains points both in $S_f(N_f(\mathbf{p})+1)$ and $S_f(N_f(\mathbf{p})+1)^c$. Because $U$ was arbitrary, $\mathbf{p}\in\partial S_f(N_f(\mathbf{p})+1)$.
\end{proof}

\begin{thm}\label{thm:projective_image}
Let $Z\subset \cp$ be an irreducible projective curve that is not a line through $[0:0:1]$ and is not the line at infinity. If $f(u,v)=0$ is a defining equation for the affine curve $X=Z\setminus \mathbf{V}(w)$, then
\[
\widetilde{\alpha}(Z) = \bigcup_{k=1}^n \partial S_f(k).
\]
\end{thm}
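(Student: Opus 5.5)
We split $Z$ into three pieces and match each against the three regions of $\overline{B}$. Write $Z=(X\cap\A)\cup(X\cap\{u=0\})\cup(Z\cap L_\infty)$. Correspondingly, $\bigcup_k\partial S_f(k)$ splits into its points in $B$ and its points on $\partial B$; note every $\partial S_f(k)\subset\overline{B}$. Lemma \ref{lem:boundary_points} already identifies the part in $B$ with $\alpha(X\cap\A)=\widetilde{\alpha}(X\cap\A)$. Points of $X$ with $u=0$, and the points $[0:1:0]$, map to the origin, which lies on $\partial B$. So the remaining task is to show
\[
\widetilde{\alpha}\bigl((X\cap\{u=0\})\cup(Z\cap L_\infty)\bigr)=\partial B\cap\bigcup_{k=1}^n\partial S_f(k).
\]

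\textbf{Inclusion $\subseteq$.} Since $\widetilde{\alpha}$ is continuous, $\widetilde{\alpha}(Z)$ is contained in the closure of $\widetilde{\alpha}(X\cap\A)$. This uses that $X\cap\A$ is dense in $Z$: $Z$ is irreducible and is neither the line $u=0$ (a line through $[0:0:1]$) nor $L_\infty$, so removing finitely many points leaves a dense set.

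The closure of $\bigcup_k\partial S_f(k)$ is itself, since this is a finite union of closed sets. Hence $\widetilde{\alpha}(Z)\subseteq\overline{\alpha(X\cap\A)}\subseteq\bigcup_k\partial S_f(k)$.

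\textbf{Inclusion $\supseteq$.} This is the main obstacle. Take $\mathbf{p}\in\partial B\cap\partial S_f(k)$; we must show $\mathbf{p}\in\widetilde{\alpha}(Z)$.

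Suppose $\mathbf{p}=(x,y,z)$ with $x>0$. Then $\mathbf{p}=\widetilde{\alpha}([x:y+iz:0])$, as in Theorem \ref{thm:projective_extension}, and $\widetilde{\alpha}^{-1}(\mathbf{p})\cap L_\infty$ is the single point $[x:y+iz:0]$. Let $m=(y+iz)/x$ be the slope. By Proposition \ref{prop:alternate_count}, $N_f(\mathbf{q})$ counts roots of $u\mapsto f(u,mu)$ in $|u|<R(\mathbf{q})$. As $\mathbf{q}\to\partial B$ with $x>0$, we have $R(\mathbf{q})\to\infty$. Let $d$ be the degree of $u\mapsto f(u,mu)$ and $n$ the degree of $f$.

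If $[1:m:0]\notin Z$, the top-degree form $f_n(1,m)\neq0$, so $d=n$. For $\mathbf{q}$ near $\mathbf{p}$ the slopes stay near $m$ and the leading coefficient stays bounded away from $0$. Because roots depend continuously on the coefficients, all $n$ roots stay bounded while $R(\mathbf{q})\to\infty$. Thus $N_f\equiv n$ on a neighborhood of $\mathbf{p}$ in $B$, so $\mathbf{p}$ is not in $\partial S_f(k)$ for any $k\le n$, a contradiction. Therefore $[1:m:0]\in Z$, and $\mathbf{p}\in\widetilde{\alpha}(Z\cap L_\infty)$.

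We make the root-boundedness precise with Rouch\'e on a large circle $|u|=\rho$, applied uniformly in $\mathbf{q}$ as in Lemma \ref{lem:root_cont}: dominance by the leading term gives exactly $n$ roots inside $|u|<\rho$. Then any $\mathbf{q}$ near $\mathbf{p}$ has $R(\mathbf{q})>\rho$, so $N_f(\mathbf{q})=n$.

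Now suppose $\mathbf{p}=(0,0,0)$. We must show some point of $Z$ maps to the origin, i.e.\ that $Z$ meets $\{u=0\}$. By B\'ezout, the line $\{u=0\}$ meets $Z$, so the origin lies in $\widetilde{\alpha}(Z)$ automatically. This is the easy case after all.

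The real care is in the uniform Rouch\'e argument at boundary points with $x>0$: we must check that the leading coefficient $f_n(1,m')$ is nonvanishing for all $m'$ near $m$, and that the constant in the uniform bound can be chosen independently of $\mathbf{q}$.
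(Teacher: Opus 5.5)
Your proof is correct. The inclusion $\subseteq$ (density of $X\cap\A$ in $Z$, continuity of $\widetilde{\alpha}$, closedness of the finite union of boundaries) is the paper's argument. You diverge on the reverse inclusion at a point $\mathbf{p}=(x,y,z)\in\partial B$ with $x>0$.

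The paper argues by contradiction:
\begin{itemize}
\item It takes a ball $B_\varepsilon(\mathbf{p})$ missing $\widetilde{\alpha}(Z)$. Using Lemma~\ref{lem:boundary_points} and connectedness of $B_\varepsilon(\mathbf{p})\cap B$, this set lies in either $\interior(S_f(k))$ or $\interior(S_f(k)^c)$.
\item In the first case, star-shapedness of $S_f(k)\cup\{(0,0,0)\}$ puts the whole open radial segment from the origin to $\mathbf{p}$ inside $S_f(k)$.
\item Letting $R\to\infty$ along that segment shows the line $v=mu$ meets $X$ fewer than $n$ times. B\'ezout then forces $[1:m:0]\in Z$.
\end{itemize}

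You instead prove the contrapositive by a direct local estimate. If $[1:m:0]\notin Z$, then $f_n(1,m')\neq 0$ for slopes $m'$ near $m$. A uniform Cauchy/Rouch\'e bound then confines all $n$ roots of $f(u,m'u)$ to a fixed disk $|u|<\rho$. Since $R(\mathbf{q})\to\infty$ as $\mathbf{q}\to\mathbf{p}$, you get $N_f\equiv n$ on a neighborhood of $\mathbf{p}$ in $B$, so $\mathbf{p}\notin\overline{S_f(k)}$ for every $k$.

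For this step your route needs neither star-shapedness, nor the connectedness dichotomy, nor B\'ezout. The elementary fact that the $u^n$-coefficient of $f(u,mu)$ is $f_n(1,m)$, vanishing exactly when $[1:m:0]\in Z$, replaces them. It also gives the concrete local picture that $N_f\equiv n$ near every boundary point not coming from $Z\cap L_\infty$. The paper's route avoids any quantitative root bound, building a purely qualitative argument from the structural results it had just established. You also justify the origin case explicitly (the line $u=0$ meets $Z$), which the paper treats as trivial.
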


\begin{proof}
We begin by noting that $(0,0,0)$  is trivially in the union of boundaries as well as the image of $Z$. Therefore, we omit this case from the remainder of the proof. It follows from our hypothesis that $X\cap\A$ is dense in $Z$, so continuity of $\widetilde{\alpha}$ implies
\[
\overline{\widetilde{\alpha}(X\cap\A)} = \widetilde{\alpha}(Z).
\]
However, by Lemma \ref{lem:boundary_points}, the union of the boundaries contains $\widetilde{\alpha}(X\cap\A)$. Since the boundaries are closed,
\[
\widetilde{\alpha}(Z) \subset \bigcup_{k=1}^n \partial S_f(k).
\]

Suppose now that $\mathbf{p}\in \partial S_f(k)\cap \partial B$ for some $k$ and that $\mathbf{p}\notin\widetilde{\alpha}(Z)$. Since $\widetilde{\alpha}(Z)$ is closed, there exists some ball $B_\varepsilon(\mathbf{p})$ such that $B_\varepsilon(\mathbf{p})\cap\widetilde{\alpha}(Z)=\emptyset$. Restricting to $B$ and applying Lemma \ref{lem:boundary_points}, we have
\begin{align*}
\left(B_\varepsilon(\mathbf{p})\cap B\right) \cap \left(\partial S_f(k) \cap B\right) &\subset \left(B_\varepsilon(\mathbf{p})\cap B\right)\cap \widetilde{\alpha}(X\cap\A)\\
&= \left(B_\varepsilon(\mathbf{p})\cap B\right) \cap \left(\widetilde{\alpha}(Z)\cap B\right)\\
&=\emptyset.
\end{align*}
However,
\[
B\setminus \partial S_f(k) = \left(\interior(S_f(k))\cap B\right)\sqcup\left(\interior(S_f(k)^c)\cap B\right).
\]
Since $B_\varepsilon(\mathbf{p})\cap B$ is connected, either
\[
B_\varepsilon(\mathbf{p})\cap B \subset \interior(S_f(k))\cap B
\]
or
\[
B_\varepsilon(\mathbf{p})\cap B \subset \interior(S_f(k)^c)\cap B.
\]
Suppose $B_\varepsilon(\mathbf{p})\cap B \subset \interior(S_f(k))\cap B$. The open line segment parameterized by $(1-\varepsilon t)\mathbf{p}$ for $t\in(0,1)$ is contained within $B_\varepsilon(\mathbf{p})\cap B$ and hence within $S_f(k)$. Since $S_f(k)\cup \{(0,0,0)\}$ is star-shaped, the entire open line segment from $(0,0,0)$ to $\mathbf{p}$ is also a subset of $S_f(k)$. Fix any $(x,y,z)$ on this line segment. By Proposition \ref{prop:alternate_count}, the polynomial
\[
u\mapsto f\left(u,\frac{y+iz}{x}u\right)
\]
has fewer than $k\leq n$ zeros on the open disk $|u|<R(x,y,z)$. However, the polynomial is unchanged by scaling $(x,y,z)$, while $R(x,y,z)$ tends toward infinity as $(x,y,z)$ approaches $\mathbf{p}$ along this line segment. In particular, the line parameterized by $u$ and defined by
\[
u\mapsto \left(u,\frac{y+iz}{x}u\right)
\]
has fewer than $n$ intersections with $X$, counted with multiplicity. By B\'{e}zout's theorem, the projective completion of this line must intersect $Z$ on $L_\infty$. However, the image of this projective line under $\widetilde{\alpha}$ is exactly the line segment connecting $(0,0,0)$ to $\mathbf{p}$. The point at infinity of this line maps to $\mathbf{p}$, so $\mathbf{p}\in\widetilde{\alpha}(Z)$, a contradiction.

Suppose instead that $B_\varepsilon(\mathbf{p})\cap B \subset \interior(S_f(k)^c)\cap B$. Then
\[
B_\varepsilon(\mathbf{p})\cap B \subset \interior(S_f(k)^c) \subset S_f(k)^c.
\]
Since $S_f(k)\subset B$, $B_\varepsilon(\mathbf{p})\subset S_f(k)^c$. In particular, $\mathbf{p}$ is not a boundary point of $S_f(k)$, again a contradiction.

In either case, every point of $\partial S_f(k)\cap \partial B$ lies in $\widetilde{\alpha}(Z)$. Combined with Lemma \ref{lem:boundary_points}, this proves the reverse inclusion and hence the theorem.
\end{proof}

\begin{figure}[!htb]
    \centering
    \includegraphics[height=1.5in]{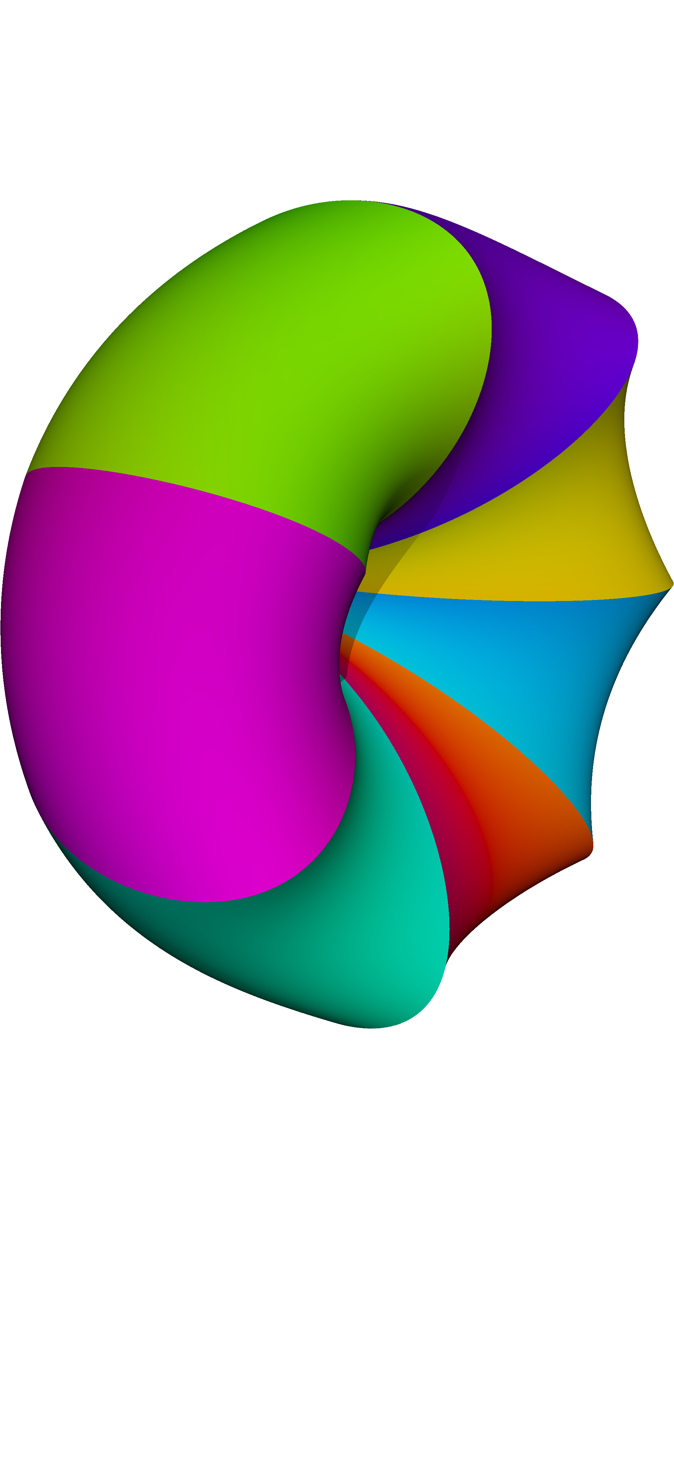}\hfill
    \includegraphics[height=1.5in]{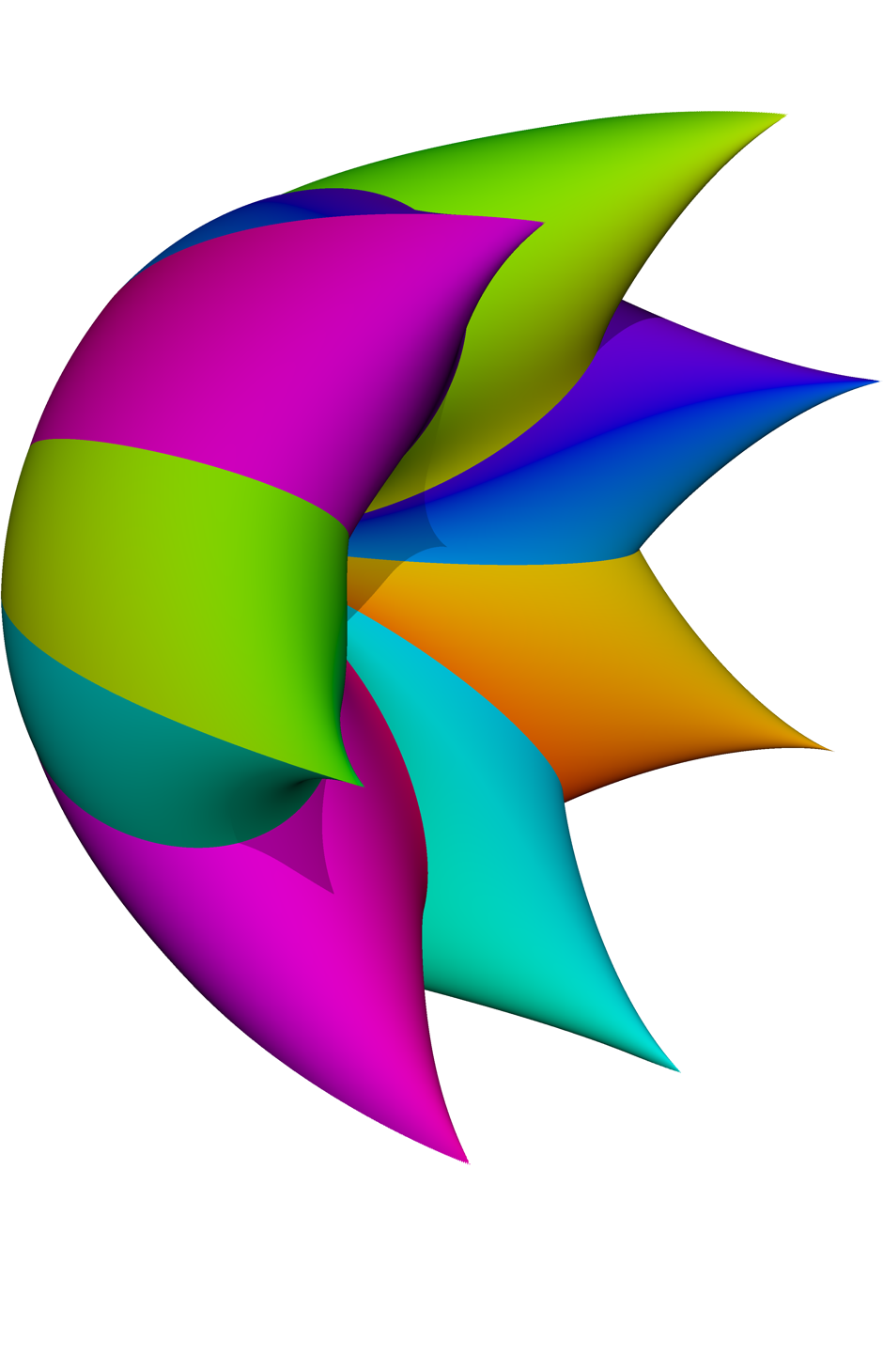}\hfill
    \includegraphics[height=1.5in]{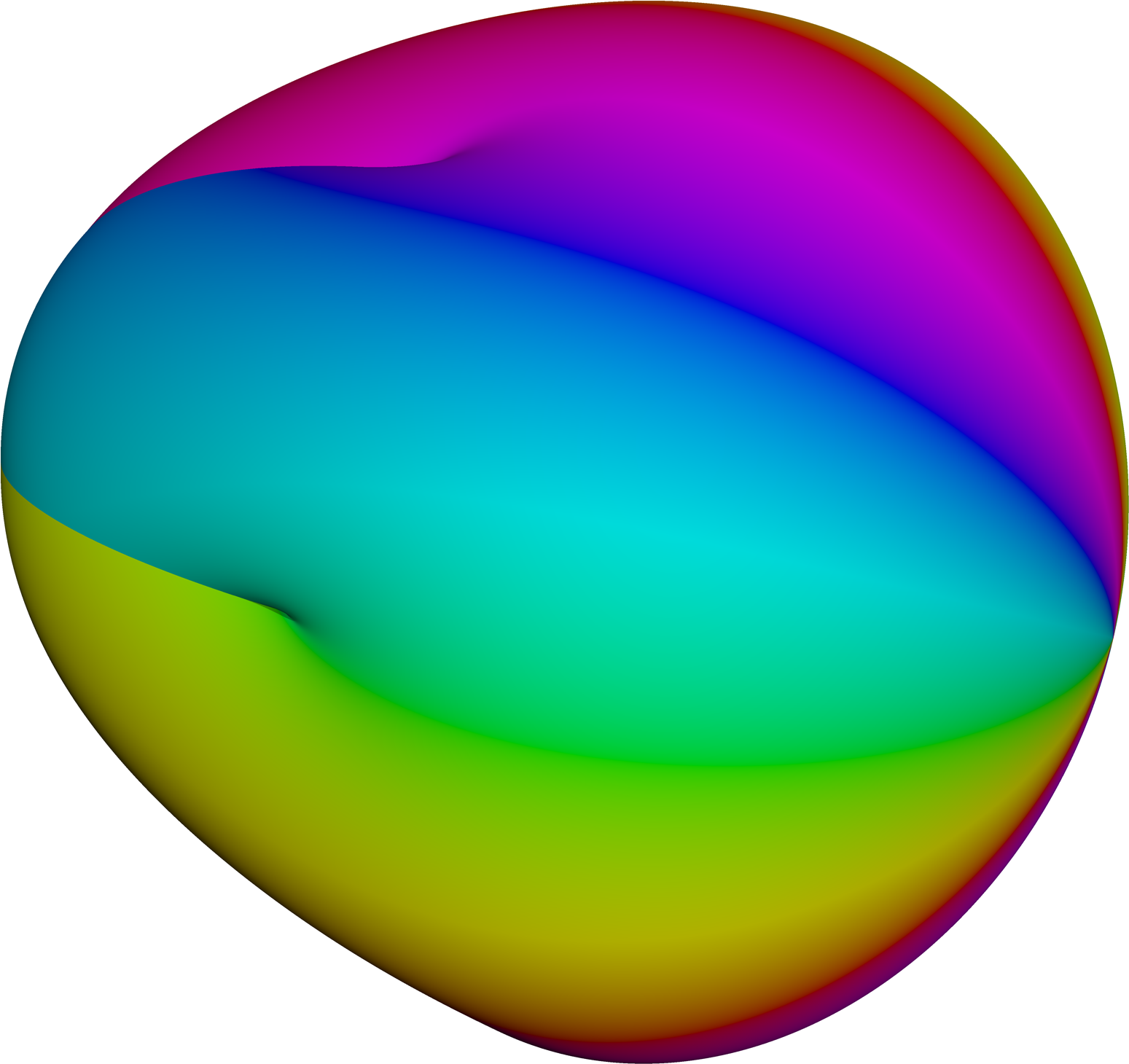}\hfill
    \includegraphics[height=1.5in]{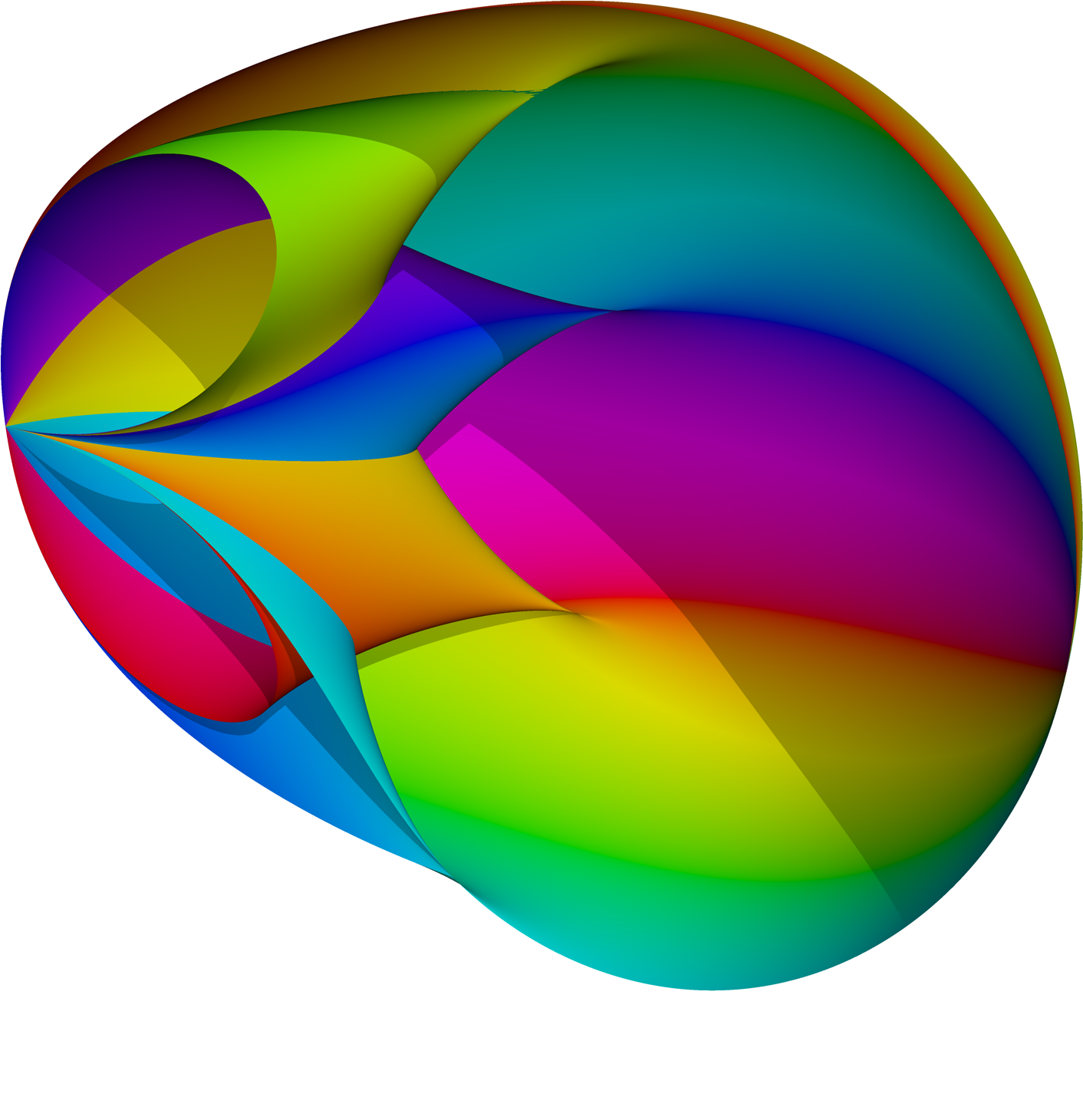}
    \caption{Ray-traced views of $\partial S_f(2)$, $\partial S_f(3)$, $\partial S_f(4)$, and a cutaway view of their union, for a transformed Klein quartic with $f(u,v)=uv^3+u^3+v$.}
\end{figure}

\subsection{Automorphisms and symmetry}

There exists a subgroup of $\operatorname{PGL}(3,\mathbb{C})$ whose elements descend to rotations of $\overline{B}$ under $\widetilde{\alpha}$ (Theorem \ref{thm:diagonal_automorphism}). Every finite-order element of $\operatorname{PGL}(3,\mathbb{C})$ is conjugate to an element of this subgroup (Proposition \ref{prop:diagonalize}). Consequently, if $Z\subset \cp$ is a projective curve with a finite-order automorphism $\sigma: Z\rightarrow Z$ that extends to $\cp$, then, after a change of coordinates, $\sigma$ descends to a rotation of the image of $Z$ in $\mathbb{R}^3$.

Automorphisms of a projective curve do not, in general, extend to $\cp$. However, if $Z$ is smooth and has degree at least four, then all automorphisms do extend to $\cp$ (Theorem \ref{thm:degree_four}). Additionally, for a smooth projective curve $Z$ and any finite-order automorphism $\sigma: Z\rightarrow Z$, there exists a map $Z\rightarrow \mathbb{R}^3$ such that $\sigma$ descends to a rotation of the image of $Z$. Therefore, finite-order automorphisms of projective curves can always be visualized as rotational symmetries of their images in $\mathbb{R}^3$ (Theorem \ref{thm:Kummer_morphism}).

\begin{thm}\label{thm:diagonal_automorphism}
Let $\sigma: \cp\rightarrow \cp$ be given by $[u:v:w]\mapsto [e^{i\theta_u}u:e^{i\theta_v}v:w]$, where $\theta_u$, $\theta_v\in\mathbb{R}$. If $\widetilde{\alpha}([u:v:w])=(x,y,z)$, then
\[
\widetilde{\alpha}\left(\sigma([u:v:w])\right)=
\begin{pmatrix}
1 & 0 & 0\\
0 & \cos(\theta_v-\theta_u) & -\sin(\theta_v-\theta_u)\\
0 & \sin(\theta_v-\theta_u) & \cos(\theta_v-\theta_u)
\end{pmatrix}
\begin{pmatrix}
x\\
y\\
z
\end{pmatrix}.
\]
\end{thm}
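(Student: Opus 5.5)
The plan is a direct substitution into the formula for $\widetilde{\alpha}$ from Theorem \ref{thm:projective_extension}, using the identification $\mathbb{R}^3\cong\mathbb{R}\times\mathbb{C}$, $(x,y,z)\mapsto(x,y+iz)$. First, $\sigma$ is well defined on projective classes, and the representative $(e^{i\theta_u}u,e^{i\theta_v}v,w)$ of $\sigma([u:v:w])$ may be plugged directly into the formula for $\widetilde{\alpha}$. This is legitimate because that formula was shown to be independent of the chosen representative.

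Next I would compute the two components. The denominator is unchanged, since
\[
|e^{i\theta_u}u|^2+|e^{i\theta_v}v|^2+|w|^2=|u|^2+|v|^2+|w|^2,
\]
and the first coordinate is unchanged for the same reason, since $|e^{i\theta_u}u|^2=|u|^2$. For the complex coordinate,
\[
\overline{e^{i\theta_u}u}\,e^{i\theta_v}v=e^{i(\theta_v-\theta_u)}\,\overline{u}v.
\]
Hence
\[
\widetilde{\alpha}(\sigma([u:v:w]))=\bigl(x,\ e^{i(\theta_v-\theta_u)}(y+iz)\bigr).
\]

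Finally, I would translate multiplication by $e^{i\phi}$ with $\phi=\theta_v-\theta_u$ back into real coordinates. Expanding gives
\[
e^{i\phi}(y+iz)=(y\cos\phi-z\sin\phi)+i(y\sin\phi+z\cos\phi),
\]
which is exactly the action of the stated $3\times3$ rotation matrix on $(x,y,z)^T$, since that matrix fixes the $x$-coordinate.

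There is no real obstacle here. The only point requiring care is to note that the choice of representative does not matter, and that the conjugation on $u$ is what produces the difference $\theta_v-\theta_u$ rather than a sum.
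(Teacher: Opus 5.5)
Your proposal is correct and follows the paper's proof essentially verbatim. You substitute the representative $(e^{i\theta_u}u,e^{i\theta_v}v,w)$ into the formula for $\widetilde{\alpha}$, note that the moduli (hence the first coordinate and the denominator) are unchanged while $\overline{u}v$ picks up the factor $e^{i(\theta_v-\theta_u)}$, and read this as a rotation about the $x$-axis; your explicit real expansion of $e^{i\phi}(y+iz)$ just spells out the final identification that the paper states in words.
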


\begin{proof}
Identifying $\mathbb{R}^3$ with $\mathbb{R}\times\mathbb{C}$, a direct computation gives
\begin{align*}
\widetilde{\alpha}\left(\sigma([u:v:w])\right)
&=\left(\frac{|ue^{i\theta_u}|^2}{|w|^2+|ue^{i\theta_u}|^2+|ve^{i\theta_v}|^2},
\frac{\overline{ue^{i\theta_u}}\,ve^{i\theta_v}}{|w|^2+|ue^{i\theta_u}|^2+|ve^{i\theta_v}|^2}\right)\\
&=\left(\frac{|u|^2}{|w|^2+|u|^2+|v|^2},\frac{\bar{u}v\,e^{i(\theta_v-\theta_u)}}{|w|^2+|u|^2+|v|^2}\right)\\
&=\left(x,(y+iz)e^{i(\theta_v-\theta_u)}\right).
\end{align*}
The second coordinate is a rotation by the angle $\theta_v-\theta_u$ of $y+iz$ in the $yz$-plane, which is exactly the rotation about the $x$-axis as described in the statement of the theorem.
\end{proof}

\begin{exmp}
Consider the Klein quartic $u^3v+v^3w+uw^3=0$, which admits the order-seven automorphism $[u:v:w]\mapsto [e^{6\pi i/7}u:e^{2\pi i/7}v:w]$. By Theorem \ref{thm:diagonal_automorphism}, this automorphism becomes a rotation of the image by $-4\pi/7$ about the $x$-axis. To better visualize the automorphism, we use the argument of $u$ to assign a hue at each point of the image.

\begin{figure}[!htb]
    \centering
    \begin{subfigure}{0.3\textwidth}
        \centering
        \includegraphics[height=1.5in]{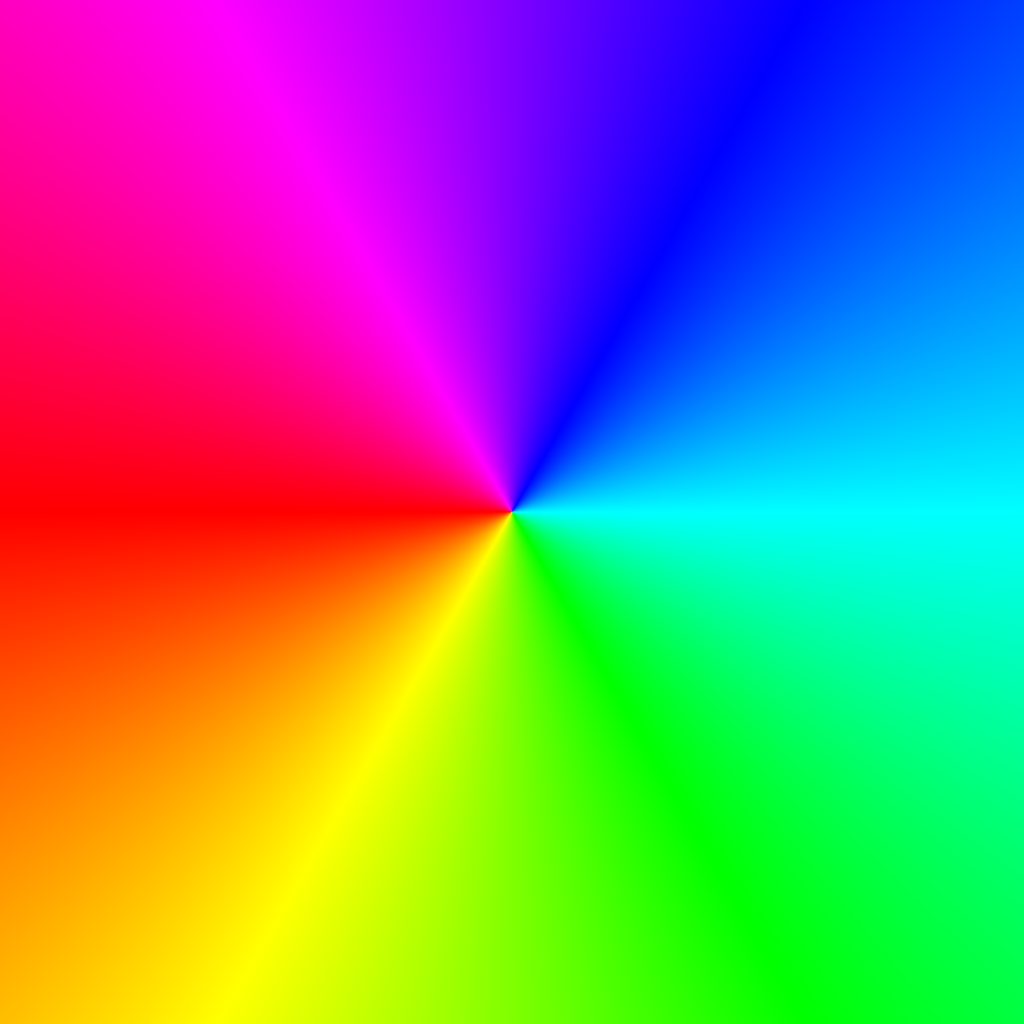}
        \caption{Hue mapping of the $u$-plane}
    \end{subfigure}
    \hfill
    \begin{subfigure}{0.3\textwidth}
        \centering
        \includegraphics[height=1.5in]{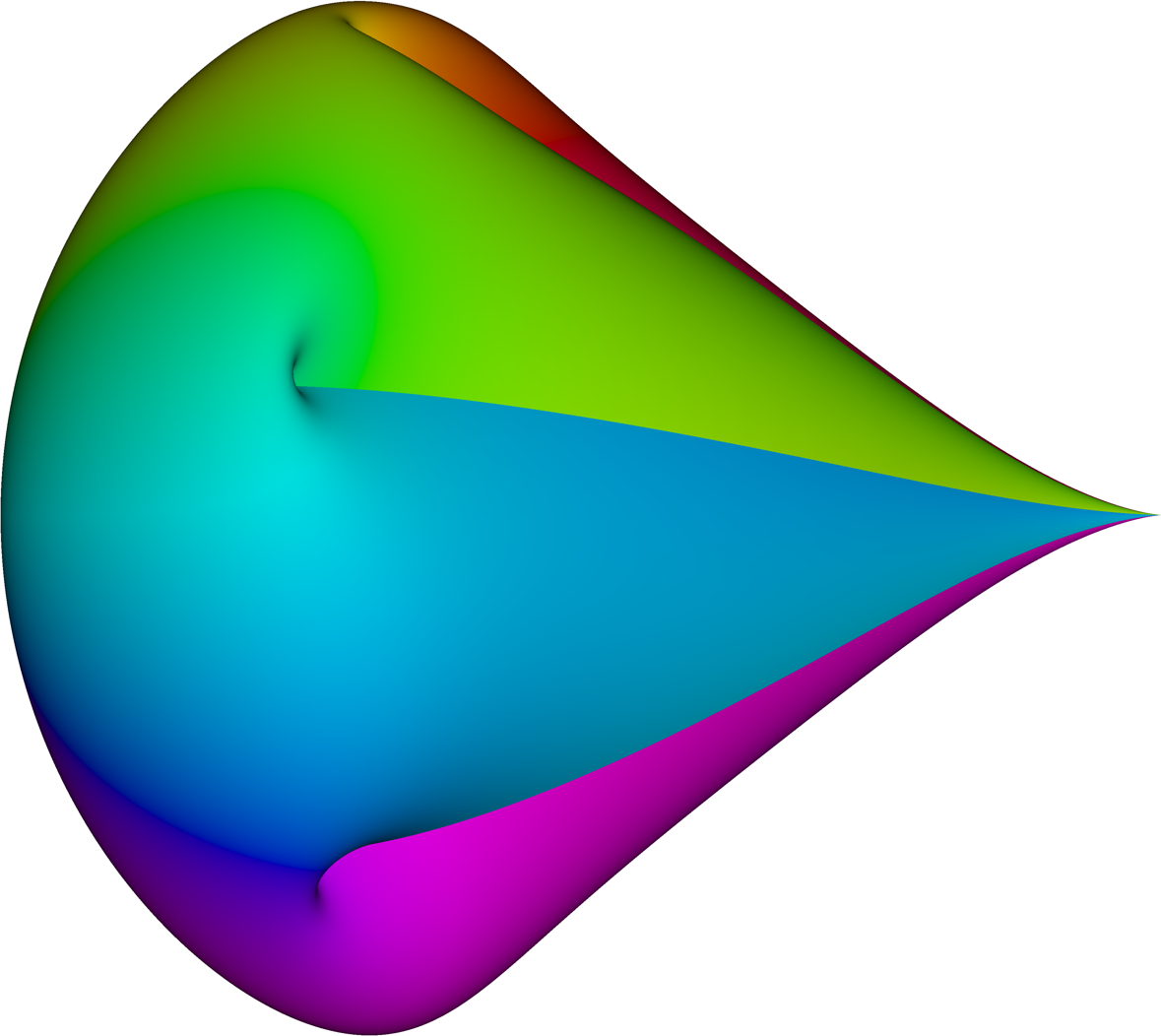}
        \caption{Side view}
    \end{subfigure}
    \hfill
    \begin{subfigure}{0.3\textwidth}
        \centering
        \includegraphics[height=1.5in]{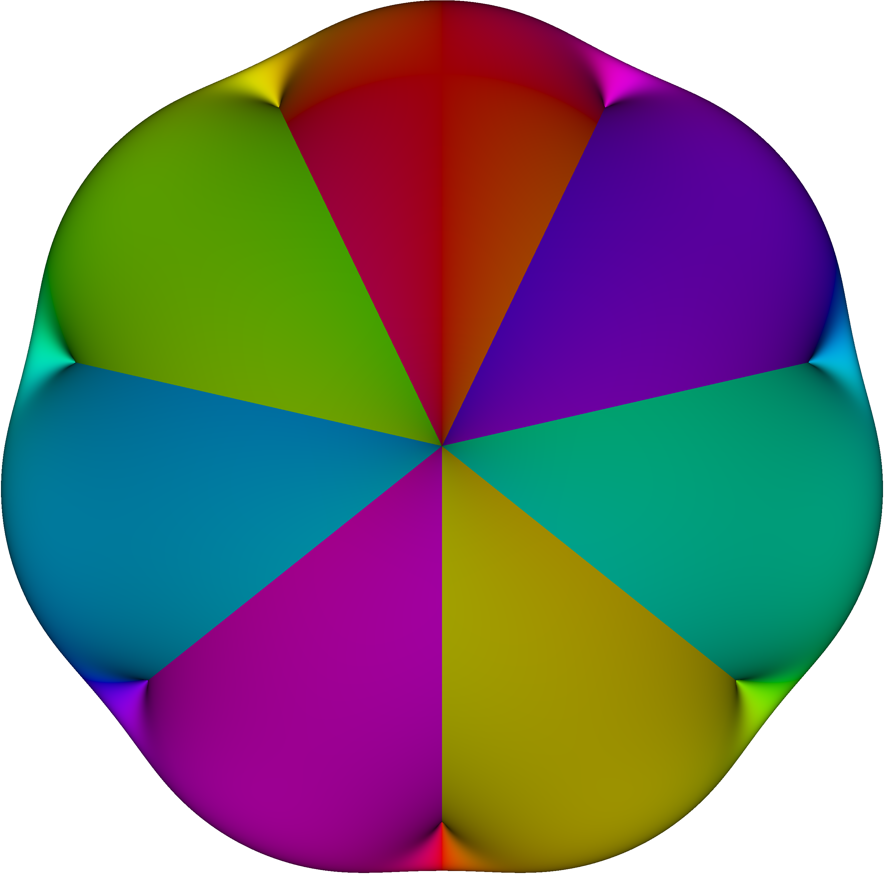}
        \caption{Front view}
    \end{subfigure}
    \caption{Domain-colored views of the Klein quartic. Rotation by $-4\pi/7$ results in a hue shift of $6\pi/7$, which is precisely the value of $\theta_u$ in the automorphism.}

\end{figure}
\end{exmp}

When $\theta_u=0$, the rotation does not shift the hue.

\begin{exmp}
Consider the transformed Fermat curve $v^7+(u-w)^7+w^7=0$ along with the automorphism $[u:v:w]\mapsto[u:e^{2\pi i/7}v:w]$. Note that this curve is projectively equivalent to the curve in Figure \ref{fig:intro}.

\begin{figure}[!htb]
    \centering
    \begin{subfigure}{0.3\textwidth}
        \centering
        \includegraphics[height=1.5in]{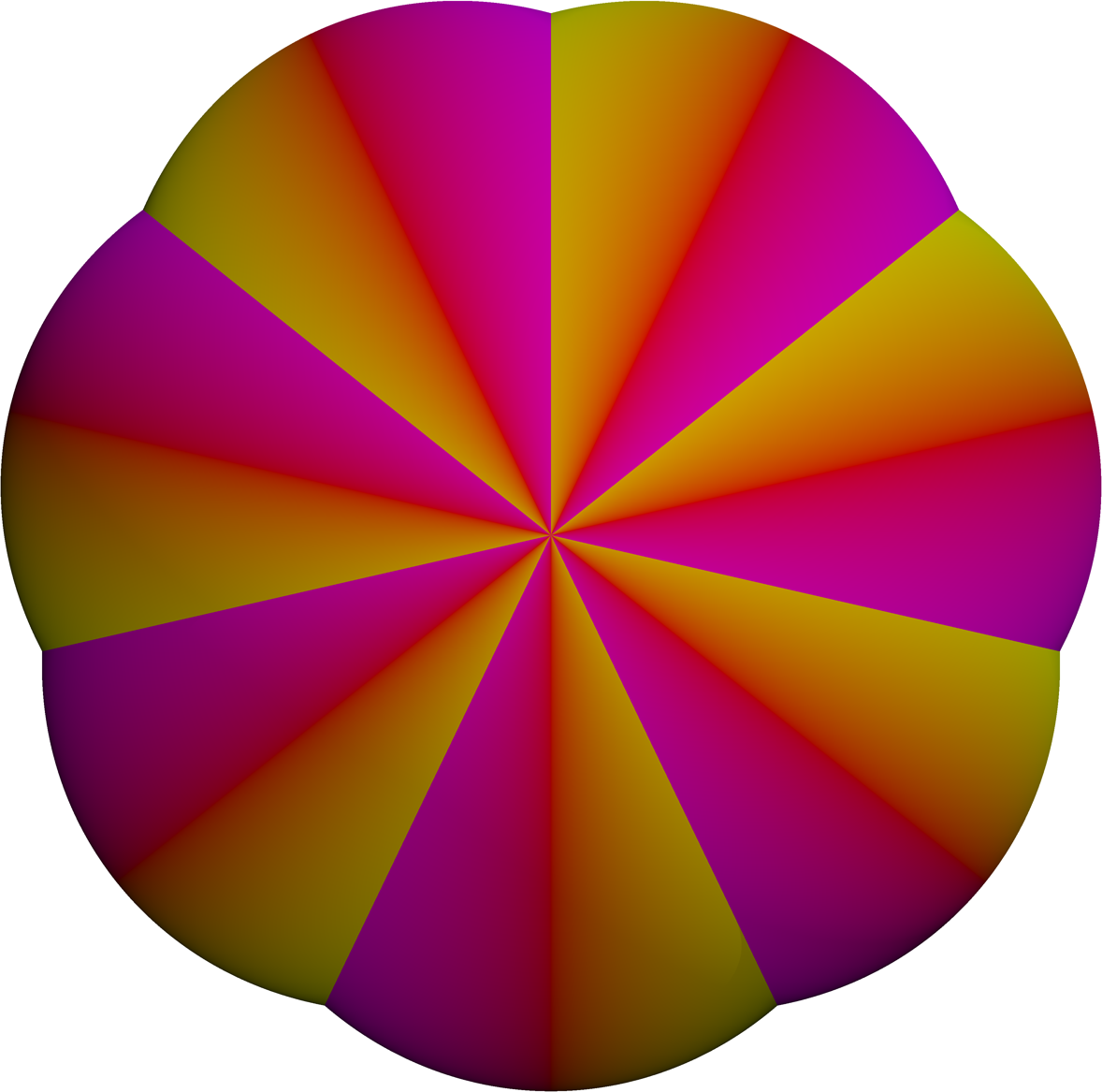}
        \caption{Rear view}
    \end{subfigure}
    \hfill
    \begin{subfigure}{0.3\textwidth}
        \centering
        \includegraphics[height=1.5in]{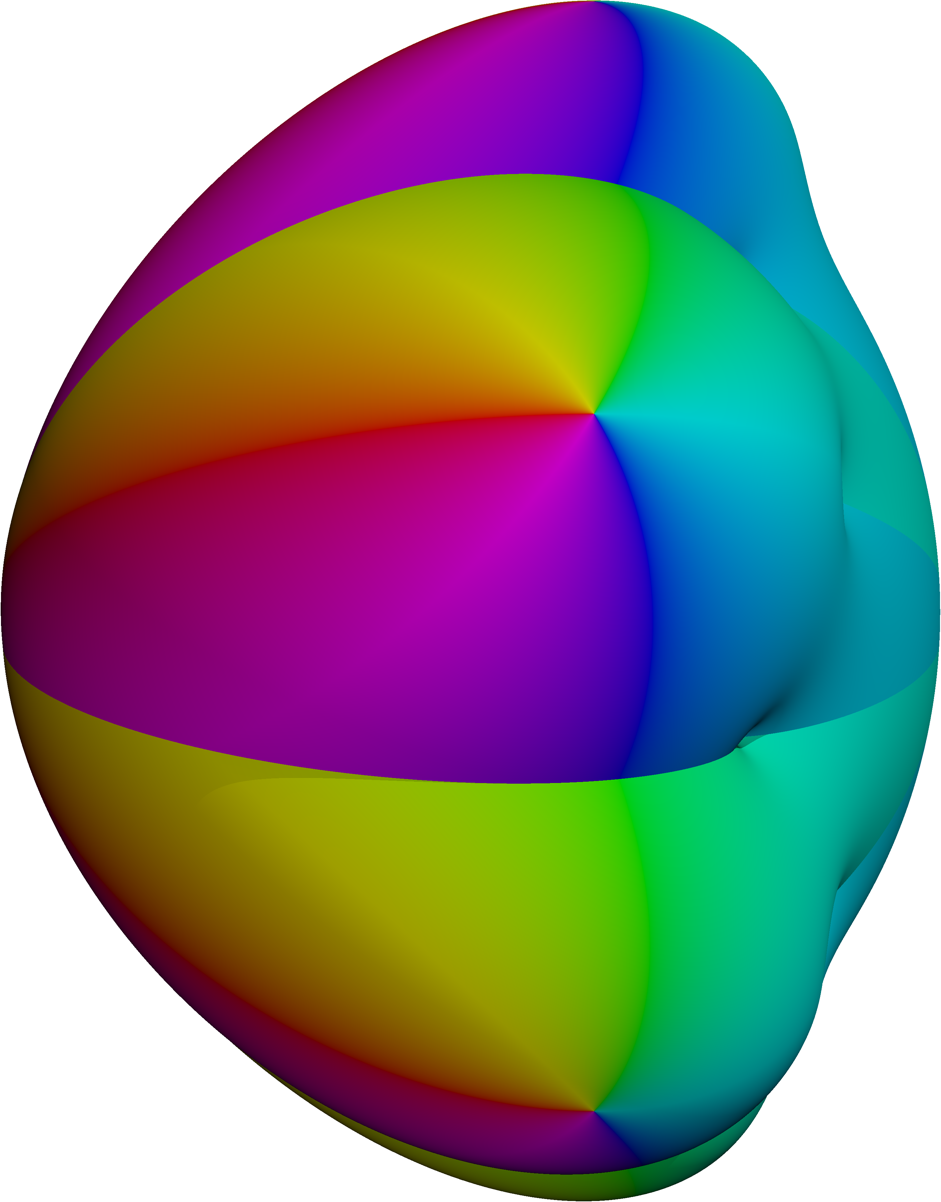}
        \caption{Side view}
    \end{subfigure}
    \hfill
    \begin{subfigure}{0.3\textwidth}
        \centering
        \includegraphics[height=1.5in]{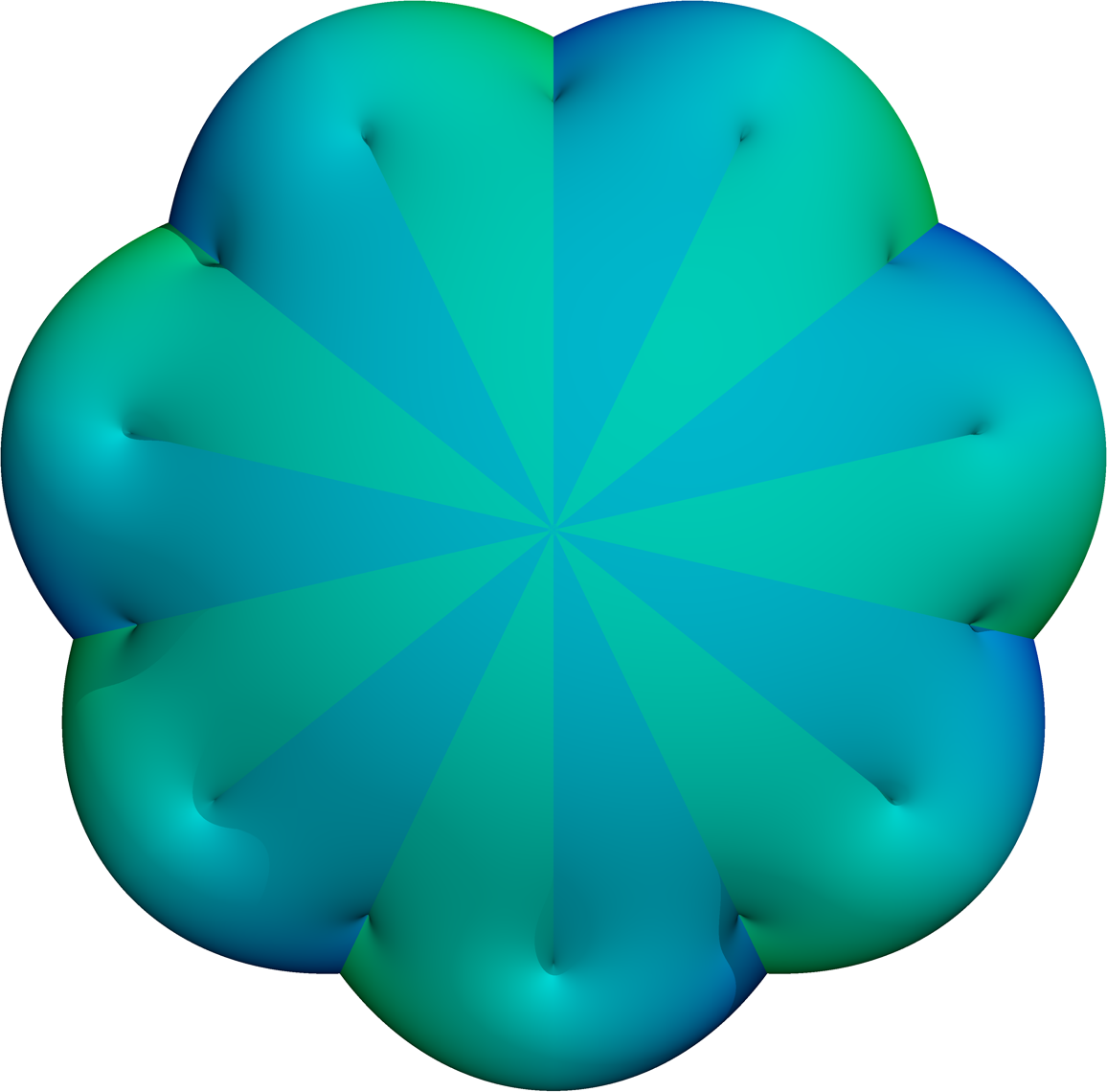}
        \caption{Front view}
    \end{subfigure}
    \caption{Domain-colored views of the curve $v^7+(u-w)^7+w^7=0$. Since $\theta_u=0$, there is no hue shift caused by rotation.}
\end{figure}
\FloatBarrier
\end{exmp}

If $\theta_u=\theta_v$, the automorphism descends to the identity map. In this case, we get a many-to-one map.

\begin{exmp}
Consider $u^6+u^3v^3+v^6+w^6=0$ with the order-$6$ automorphism $\sigma:[u:v:w]\rightarrow[e^{2\pi i/6}u:e^{2\pi i/6}v:w]$. Since $\sigma$ descends to the identity map on $\overline{B}$, on the affine chart $w=1$, the restriction $\alpha|_X$ is six-to-one.

\begin{figure}[!htb]
    \centering
    \begin{subfigure}{0.3\textwidth}
        \centering
        \includegraphics[height=1.5in]{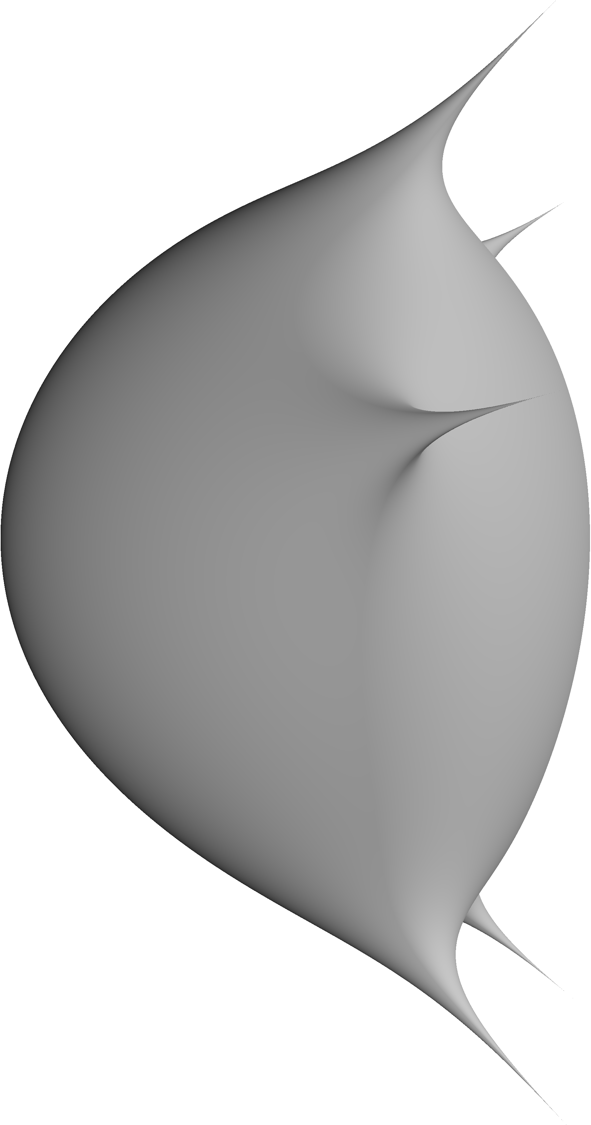}
        \caption{Side view}
    \end{subfigure}
    \hfill
    \begin{subfigure}{0.3\textwidth}
        \centering
        \includegraphics[height=1.5in]{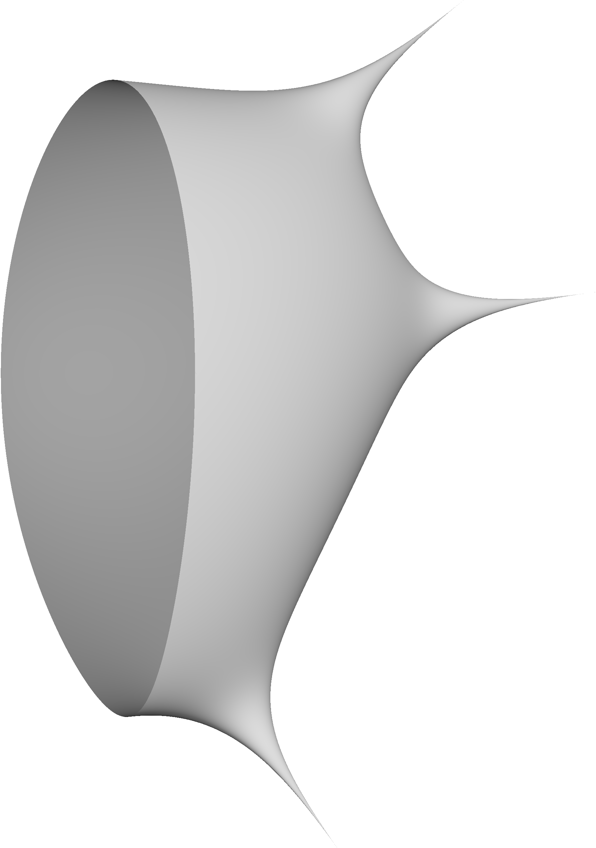}
        \caption{Cutaway view}
    \end{subfigure}
    \hfill
    \begin{subfigure}{0.3\textwidth}
        \centering
        \includegraphics[height=1.5in]{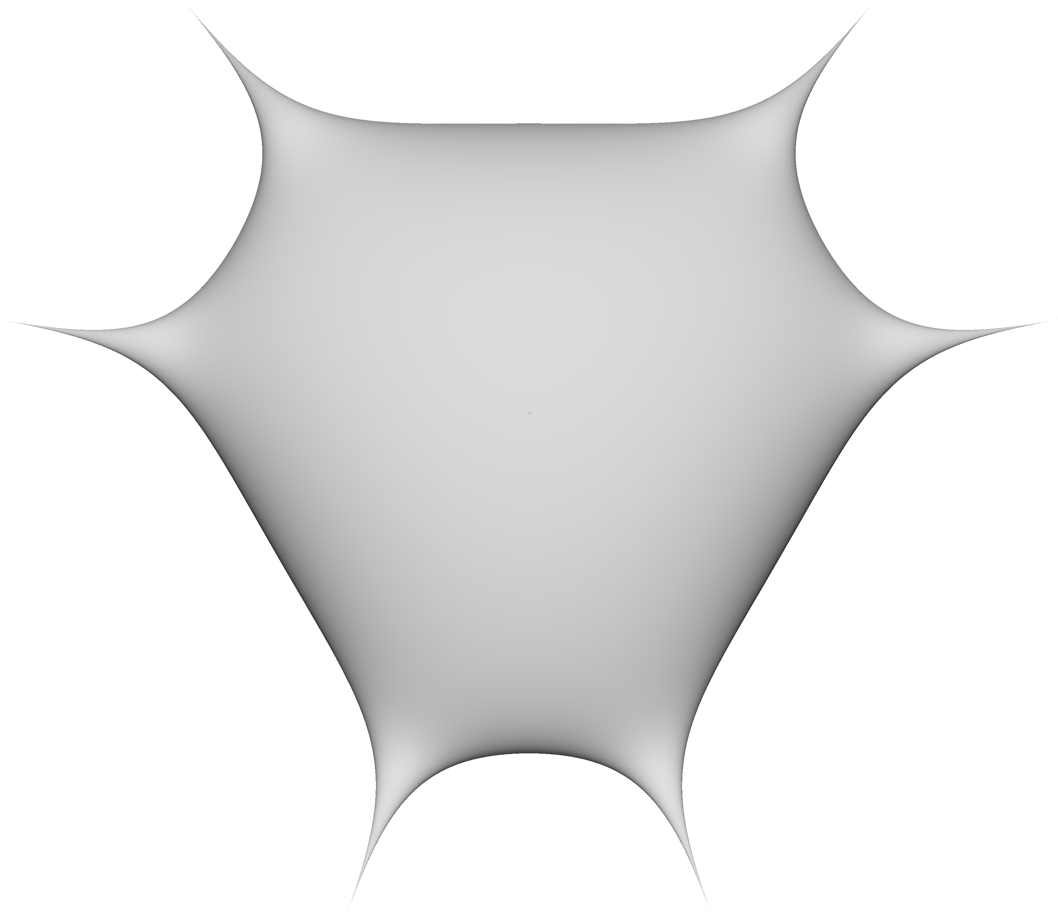}
        \caption{Front view}
    \end{subfigure}
    \caption{Views of the curve $u^6+u^3v^3+v^6+w^6=0$. Because every point of the affine image has six preimages, domain coloring is not being applied.}
    \label{fig:six_to_one}
\end{figure}

\end{exmp}

\begin{prop}\label{prop:diagonalize}
Let $\gamma:\cp\rightarrow\cp$ be a finite-order automorphism. Then there exists a projective change of coordinates $\psi:\cp\rightarrow\cp$ such that
\[
\gamma = \psi \circ \sigma\circ \psi^{-1},
\]
where $\sigma([u:v:w])=[e^{i\theta_u}u:e^{i\theta_v}v: w]$ for some $\theta_u$, $\theta_v\in\mathbb{R}$.
\end{prop}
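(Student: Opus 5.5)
The plan is to lift $\gamma$ to a matrix in $\operatorname{GL}(3,\mathbb{C})$, normalize it so that it becomes a finite-order matrix, and then diagonalize it.

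First, since $\operatorname{Aut}(\cp)=\operatorname{PGL}(3,\mathbb{C})$, I will choose a matrix $A\in\operatorname{GL}(3,\mathbb{C})$ representing $\gamma$. If $\gamma$ has order $m$, then $\gamma^m$ is the identity of $\cp$, so $A^m=c I$ for some $c\in\mathbb{C}^*$. Replacing $A$ by $c^{-1/m}A$ for any choice of $m$-th root, I may assume $A^m=I$.

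Next, the polynomial $t^m-1$ annihilates $A$ and has distinct roots. Hence the minimal polynomial of $A$ has distinct roots, so $A$ is diagonalizable, and its eigenvalues are $m$-th roots of unity. Concretely, $A=P D P^{-1}$ with $D=\operatorname{diag}(\zeta_1,\zeta_2,\zeta_3)$, where each $|\zeta_j|=1$. (Alternatively, one could average a Hermitian form over the finite cyclic group generated by $A$ to make $A$ unitary, then apply the spectral theorem.)

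Finally, I normalize. Projectively, $D$ and $\zeta_3^{-1}D=\operatorname{diag}(\zeta_1/\zeta_3,\zeta_2/\zeta_3,1)$ define the same map. I write $\zeta_1/\zeta_3=e^{i\theta_u}$ and $\zeta_2/\zeta_3=e^{i\theta_v}$ with $\theta_u,\theta_v\in\mathbb{R}$; this is possible because these ratios lie on $S^1$. Taking $\psi$ to be the projective map induced by $P$ and $\sigma$ the map induced by $\zeta_3^{-1}D$, the identity $A=PDP^{-1}$ gives $\gamma=\psi\circ\sigma\circ\psi^{-1}$, as required.

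The only real obstacle is the first step: finite order in $\operatorname{PGL}$ only gives $A^m$ scalar rather than $A^m=I$. Rescaling by an $m$-th root of $c$ handles this. Everything after that is standard linear algebra.
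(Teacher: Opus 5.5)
Your proof is correct and follows essentially the same route as the paper: lift $\gamma$ to a matrix whose $m$-th power is scalar, deduce diagonalizability, and divide by the third eigenvalue to get $\sigma$. The only cosmetic difference is that you rescale first to get $A^m=I$, whereas the paper keeps $T^n=\lambda I$ and notes that the eigenvalues, all $n$-th roots of $\lambda$, share a common nonzero modulus, so their ratios lie on $S^1$.
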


\begin{proof}
Let $T$ be a representative of $\gamma$. Then there exists some positive integer $n$ such that $T^n = \lambda I$. Therefore, $T$ is diagonalizable and we can write $T=PDP^{-1}$ with
\[
D = \begin{pmatrix}
\lambda_1 & 0 & 0\\
0 & \lambda_2 & 0\\
0 & 0 & \lambda_3
\end{pmatrix}.
\]
Let $\sigma = [D]$ and $\psi = [P]$. Since $T^n=\lambda I$, $\lambda_i^n = \lambda$ and so $|\lambda_1|=|\lambda_2|=|\lambda_3|\neq 0$. In particular, $\lambda_1/\lambda_3$ and $\lambda_2/\lambda_3$ are on the unit circle and can be written as $e^{i\theta_u}$ and $e^{i\theta_v}$, respectively. Therefore, $\sigma([u:v:w]) = [e^{i\theta_u}u:e^{i\theta_v}v: w]$ as claimed.
\end{proof}

As a consequence of the previous proposition, if $\sigma:Z\rightarrow Z$ is a finite-order automorphism that extends to $\cp$, then after a projective change of coordinates we are precisely in the setting of Theorem \ref{thm:diagonal_automorphism}, in which case the automorphism can be visualized as a rotational symmetry. This raises the question of when automorphisms of curves extend to $\cp$. We do not handle the extension problem in full generality, but Theorem \ref{thm:degree_four} shows that every automorphism of a smooth plane curve of degree at least $4$ extends to a projective automorphism of $\cp$. This is a consequence of the following special case of Noether's theorem.

\begin{thm}[Noether]
Suppose $Z\subset\cp$ is a smooth plane curve of degree $n\geq 4$. If $\mathcal{L}$ is a line bundle on $Z$ satisfying $\deg\mathcal{L}=n$ and $h^0(Z,\mathcal{L})\geq 3$, then $\mathcal{L}\simeq\mathcal{O}_Z(1)$.
\end{thm}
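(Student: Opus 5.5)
Let $\mathcal{L}$ be a line bundle on $Z$ with $\deg\mathcal{L}=n$ and $h^0(Z,\mathcal{L})\geq 3$. The plan is to show $\mathcal{L}\otimes\mathcal{O}_Z(-1)$ is trivial. The tools are the standard facts about smooth plane curves: the genus is $g=(n-1)(n-2)/2$, adjunction gives $K_Z\simeq\mathcal{O}_Z(n-3)$, and since $H^1(\cp,\mathcal{O}(m))=0$, the restriction map $H^0(\cp,\mathcal{O}(m))\to H^0(Z,\mathcal{O}_Z(m))$ is surjective for every $m$. Consequently the canonical series is cut out by the plane curves of degree $n-3$.

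First we reduce to a divisor statement. Write $\mathcal{L}=\mathcal{O}_Z(D)$ with $D$ effective of degree $n$. By Riemann--Roch,
\[
h^0(D)-h^0(K_Z-D)=n+1-g.
\]
Since $h^0(D)\ge 3$, this gives
\[
h^0(K_Z-D)\ge g+2-n .
\]
The surjectivity above identifies $H^0(K_Z-D)$ with the space of degree $n-3$ forms vanishing on $D$, taken modulo multiples of $f$; because $n-3<n$, no such multiple is nonzero. That space has dimension
\[
\binom{n-1}{2}=g.
\]
So the degree-$n$ divisor $D$ imposes at most $n-2$ conditions on plane curves of degree $n-3$. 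That is, $D$ fails to impose independent conditions by at least $2$.

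The main obstacle is the next step: showing that a set of $n$ points on $Z$ failing by $2$ to impose independent conditions on curves of degree $n-3$ must be collinear. My plan is to use the classical lemma that any $\ell\le 2m+1$ points of $\cp$ impose independent conditions on curves of degree $m$ unless $m+2$ of them lie on a line. Here $m=n-3$, so $2m+1=2n-5\ge n$ exactly when $n\ge 5$, and for these degrees the lemma applies to $D$. Failure therefore forces $n-1$ of the points to lie on a line $\ell$. A sharper version is then needed: if exactly $n-1$ points of $D$ are collinear and the last is off the line, the failure is only $1$. The proof is by induction, removing the line and applying the count in degree $m-1$. So failure $\ge 2$ forces all $n$ points onto $\ell$. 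The case $n=4$ is handled directly: then $m=1$, $g=3$, and $4$ points failing by $2$ to impose conditions on lines means they impose only $1$ condition, which is impossible unless all points coincide, and hence they are collinear. I also expect to need a careful treatment of multiplicities in $D$, by working with the subscheme structure, or by first moving $D$ in its linear system to a reduced general member; base-point-freeness of $|D|$ would have to be checked, or we take the base locus into account.

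Once $D\subset\ell\cap Z$ as schemes with $\deg D=n=\deg(\ell\cdot Z)$, Bézout gives $D=\ell\cdot Z$. Hence
\[
\mathcal{L}\simeq\mathcal{O}_Z(\ell\cdot Z)\simeq\mathcal{O}_Z(1),
\]
which completes the argument.
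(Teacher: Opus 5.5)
The paper gives no argument of its own here. It quotes the statement as the case $r=1$, $e=0$ of Hartshorne's Theorem~2.1, a generalization of Noether's theorem to generalized divisors on (possibly singular) plane curves. Your route is different and self-contained; it is the classical adjoint-curve argument. Adjunction, Riemann--Roch and $H^0(\cp,\mathcal{O}(n-3))\cong H^0(Z,K_Z)$ turn $h^0(D)\ge 3$ into ``$D$ imposes at most $n-2$ conditions on curves of degree $n-3$.'' The lemma on $\le 2m+1$ points then forces $D$ onto a line. The citation buys generality; your argument buys an elementary proof of exactly the case the paper uses. For $n\ge 5$ and reduced $D$ your argument is correct, including the ``sharper'' count: curves of degree $n-3$ through $n-1$ collinear points contain the line, leaving curves of degree $n-4$ through one point, so the failure is exactly $1$.

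Two steps need repair.

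\emph{The case $n=4$.} You miscount here. Riemann--Roch says $D$ imposes at most $n-2=2$ conditions on lines, not $1$; you subtracted $2$ from $g=3$ rather than from $n=4$. ``Only $1$ condition'' would put a length-$4$ scheme inside the base point of a pencil of lines, which is impossible and does not yield collinearity. The correct step is immediate: $h^0(K_Z-D)\ge g+2-n=1$ with $K_Z\simeq\mathcal{O}_Z(1)$ gives a line $\ell$ with $\ell\cdot Z\ge D$. Equal degrees then force $\ell\cdot Z=D$, multiplicities included.

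\emph{Non-reduced $D$ for $n\ge 5$.} This case is left open, and the lemma as you state it concerns distinct points. There are two ways to close it.
\begin{itemize}
\item Use the version of the lemma for $0$-dimensional subschemes. The proof is the same induction via the residual sequence $0\to\mathcal{I}_{\operatorname{Res}_\ell D}(m-1)\to\mathcal{I}_D(m)\to\mathcal{I}_{D\cap\ell,\ell}(m)\to 0$, with ``$m+2$ points on a line'' read as $\deg(D\cap\ell)\ge m+2$.
\item Run your own count on the moving part of $|D|$. Write $|D|=B+|M|$ with $\deg B=b$ and $|M|$ base-point free, so a general $M_0\in|M|$ is reduced with $h^0(M_0)\ge 3$. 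Riemann--Roch shows $M_0$ fails by at least $2$ on curves of degree $n-3$. The lemma then puts $n-1$ of its $n-b$ points on a line, so $b\le 1$. If $b=1$, the failure would be exactly $1$, a contradiction. Hence $b=0$, and you may replace $D$ by a reduced general member of $|D|$.
\end{itemize}
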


For a modern proof and generalization of Noether's theorem, see Hartshorne \cite[Theorem 2.1]{hartshorne1986generalized}. The above statement is the special case when $r=1$ and $e=0$.

\begin{thm}\label{thm:degree_four}
Suppose $i:Z\hookrightarrow\cp$ embeds $Z$ as a smooth plane curve of degree $n\geq 4$, and $\gamma:Z\rightarrow Z$ is an automorphism. Then there exists a projective change of coordinates $\psi:\cp\rightarrow\cp$ such that
\[
\psi\circ i=i\circ\gamma.
\]
\end{thm}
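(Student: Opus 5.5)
The plan is to apply Noether's theorem to the pulled-back hyperplane bundle. Set $\mathcal{L}=\gamma^*\mathcal{O}_Z(1)$, where $\mathcal{O}_Z(1)=i^*\mathcal{O}_{\cp}(1)$. Since $\gamma$ is an automorphism of $Z$, it is an isomorphism of degree one. Hence $\deg\mathcal{L}=\deg\mathcal{O}_Z(1)=n$. Pullback by $\gamma$ also induces an isomorphism $H^0(Z,\mathcal{O}_Z(1))\cong H^0(Z,\mathcal{L})$.

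To obtain $h^0(Z,\mathcal{L})\ge 3$, I will argue as follows. The restrictions of the coordinate sections $u,v,w$ of $\mathcal{O}_{\cp}(1)$ to $Z$ are linearly independent, because $Z$ does not lie in a line: it is irreducible of degree $n\ge 4>1$. Therefore $h^0(Z,\mathcal{O}_Z(1))\ge 3$, and so $h^0(Z,\mathcal{L})\ge 3$. Noether's theorem then gives an isomorphism $\phi:\mathcal{L}\xrightarrow{\sim}\mathcal{O}_Z(1)$.

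Next I will build the linear map. Consider the two morphisms $i$ and $i\circ\gamma$ from $Z$ to $\cp$. The morphism $i\circ\gamma$ is given by the sections $\gamma^*u,\gamma^*v,\gamma^*w$ of $\mathcal{L}$. Transporting these along $\phi$ gives three sections $s_1,s_2,s_3$ of $\mathcal{O}_Z(1)$ with no common zero, and they define the same morphism $i\circ\gamma$. I need to express each $s_j$ as a linear combination of $u|_Z,v|_Z,w|_Z$. This requires that the restriction map $H^0(\cp,\mathcal{O}(1))\to H^0(Z,\mathcal{O}_Z(1))$ be surjective. Surjectivity follows from the exact sequence $0\to\mathcal{O}_{\cp}(1-n)\to\mathcal{O}_{\cp}(1)\to\mathcal{O}_Z(1)\to0$ together with $H^1(\cp,\mathcal{O}(1-n))=0$. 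Thus $s_j=\sum_k a_{jk}\,x_k|_Z$ with $(x_1,x_2,x_3)=(u,v,w)$. Let $A=(a_{jk})$ and $\psi=[A]$. Then $\psi\circ i=i\circ\gamma$ holds on $Z$, at least wherever $A$ applied to $i(p)$ is nonzero.

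The main obstacle is showing that $A$ is invertible. If $A$ were singular, some nonzero vector $c$ would satisfy $c^TA=0$. Then $\sum_j c_js_j=0$, which says that the image of $i\circ\gamma$ lies in the line $\sum_j c_jx_j=0$. But $i\circ\gamma$ has image $i(Z)$, since $\gamma$ is surjective, and $i(Z)$ is not contained in a line, as already noted. This contradiction shows that $A\in\mathrm{GL}(3,\mathbb{C})$. Consequently $\psi$ is a projective change of coordinates, defined everywhere, and the identity $\psi\circ i=i\circ\gamma$ holds on all of $Z$. Finally, the isomorphism $\phi$ is determined only up to a nonzero scalar, because $Z$ is projective and connected. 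This ambiguity only rescales $A$ and leaves $\psi$ unchanged.
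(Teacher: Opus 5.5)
Your proposal is correct and follows essentially the same route as the paper: you apply Noether's theorem to the pullback of the hyperplane bundle under $i\circ\gamma$ (your $\gamma^*\mathcal{O}_Z(1)$ is the paper's $\mathcal{M}$), use the restriction sequence to identify $H^0(Z,\mathcal{O}_Z(1))$ with the span of $u|_Z,v|_Z,w|_Z$, and read off $\psi$ from the change-of-basis matrix. Your explicit argument that $A$ is invertible, because $i(Z)$ lies in no line, spells out what the paper compresses into saying that the two coordinate triples form bases of $H^0(Z,\mathcal{L})$.
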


\begin{proof}
Define $j=i\circ\gamma$, $\mathcal{L}=i^*\mathcal{O}_{\cp}(1)$, and $\mathcal{M}=j^*\mathcal{O}_{\cp}(1)$. Since $\mathcal{M}\simeq\gamma^*\mathcal{L}$, the line bundle $\mathcal{M}$ has degree $n$. The three coordinate sections defining $j$ are linearly independent because $j(Z)=i(Z)$ is not contained in a line. Therefore $h^0(Z,\mathcal{M})\geq 3$. By the above special case of Noether's theorem, $\mathcal{M}\simeq\mathcal{O}_Z(1)\simeq\mathcal{L}$.

Since $Z$ is a plane hypersurface, the restriction sequence gives $h^0(Z,\mathcal{O}_Z(1))=3$. Fix an isomorphism $\mathcal{M}\simeq\mathcal{L}$. Under this identification of global sections, the coordinate sections defining $i$ and $j$ form two bases of $H^0(Z,\mathcal{L})$. They differ only by an element of $\operatorname{GL}(3,\mathbb{C})$, which gives rise to a projective change of coordinates $\psi:\cp\rightarrow\cp$ satisfying $j=\psi\circ i$. Since $j=i\circ\gamma$, the theorem follows.
\end{proof}

The preceding theorem only shows that automorphisms extend to $\cp$, but in order to apply Proposition \ref{prop:diagonalize} we also need that the automorphism has finite order. However, any smooth plane curve of degree at least four has genus at least three. By Hurwitz's theorem, the automorphism group is finite and therefore all automorphisms have finite order. Consequently, we can always apply Proposition \ref{prop:diagonalize} and Theorem \ref{thm:degree_four} to any automorphism of a smooth plane curve of degree at least four.

Despite this, having a smooth embedding in $\cp$ is the exception rather than the rule for higher genus curves. Nonetheless, for a smooth projective curve with a finite-order automorphism $\sigma$, it is still possible to find a morphism to $\cp$ such that Theorem \ref{thm:diagonal_automorphism} applies and the automorphism can be visualized.

\begin{thm}\label{thm:Kummer_morphism}
Let $Z$ be a smooth projective curve and $\sigma:Z\rightarrow Z$ be an automorphism of finite order $n$. Then there exists a nonconstant morphism $\phi:Z\rightarrow\cp$ and a primitive $n$-th root of unity $\zeta\in\mathbb{C}$ such that $\phi\circ\sigma = \tau\circ\phi$, where $\tau([u:v:w]) = [\zeta u:v:w]$.
\end{thm}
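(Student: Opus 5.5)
The plan is to build $\phi$ from an eigenfunction of $\sigma$ in the function field, in the spirit of Kummer theory. Let $K=\mathbb{C}(Z)$ be the function field of $Z$. The automorphism $\sigma$ acts on $K$ by pullback, $g\mapsto g\circ\sigma$. This generates a cyclic group $G=\langle\sigma^*\rangle$ of order exactly $n$, since $Z$ is determined by $K$ and $\sigma$ has order $n$.

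Let $K^G$ be the fixed field. By Artin's theorem, $K/K^G$ is Galois with group $G$, so it is cyclic of degree $n$. Since $\mathbb{C}$ contains all $n$-th roots of unity and the extension has characteristic zero, we can decompose. Either Kummer theory or the normal basis theorem (equivalently, diagonalizing the linear operator $\sigma^*$ of order $n$ on the $K^G$-vector space $K$) gives the following. The operator $\sigma^*$ is diagonalizable, and its eigenvalues form the full group $\mu_n$, because $K\cong K^G[G]$ as a $G$-module.

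Fix a primitive $n$-th root of unity $\zeta$. Choose a nonzero $h\in K$ with $h\circ\sigma=\zeta h$. Such an $h$ exists concretely: take a primitive element $g$ of $K/K^G$ and form the Lagrange resolvents
\[
h_k=\sum_{j=0}^{n-1}\zeta^{-kj}\,g\circ\sigma^j.
\]
For each $k$ we have $h_k\circ\sigma=\zeta^k h_k$. The sum $\sum_k h_k$ equals $n g\neq 0$. The $h_k$ span the $K^G$-linear span of the conjugates $g\circ\sigma^j$, which is all of $K$. Linear independence of characters then forces each eigenspace, and in particular the $\zeta$-eigenspace, to be nonzero. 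I expect this existence step to be the main obstacle. Its subtlety is that $h$ must belong to a \emph{primitive} eigenvalue, not merely some eigenvalue. The point is that the $K^G$-dimension count makes every eigenspace one-dimensional, and in particular nonzero.

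Next, $h$ is nonconstant. If $h$ were a constant $c$, then $c=\zeta c$ with $\zeta\neq1$ would give $c=0$. (For $n=1$ any nonconstant $h$ works with $\zeta=1$.) As a nonconstant rational function on the smooth projective curve $Z$, $h$ extends to a morphism $Z\to\mathbb{P}^1$. Define
\[
\phi=[h:1:1]:Z\to\cp,
\]
meaning that on a neighborhood of each point we clear denominators by multiplying by a local uniformizer power so the coordinates have no common zero. This is a nonconstant morphism, since smoothness of $Z$ guarantees the rational map extends everywhere. Then, away from the finitely many poles of $h$,
\[
\phi\circ\sigma=[h\circ\sigma:1:1]=[\zeta h:1:1]=\tau\circ\phi.
\]
Two morphisms from a curve to $\cp$ agreeing on a dense open set agree everywhere, which gives $\phi\circ\sigma=\tau\circ\phi$.

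As an optional refinement, one could take $\phi=[h:h':1]$ with $h'$ $\sigma$-invariant and nonconstant, for example $h'=h^n$, to obtain a less degenerate image. By Theorem \ref{thm:diagonal_automorphism}, $\tau$ then descends to rotation by $-\arg\zeta$ about the $x$-axis. This refinement is not needed for the statement.
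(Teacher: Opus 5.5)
Your route is essentially the paper's. Both produce an eigenfunction of $\sigma^*$ with a primitive eigenvalue from the Kummer/eigenspace structure of the cyclic extension of the function field over its $\sigma^*$-fixed field. Both place it in the first coordinate, extend the rational map using smoothness of $Z$, and conclude by agreement on a dense open set. There are two minor differences. First, the paper takes a Kummer generator $u$ with $L=K(u)$ and gets primitivity for free: if $\zeta$ had order $d<n$, then $(\sigma^*)^d$ would fix $K(u)=L$. You instead fix $\zeta$ in advance, which requires the $\zeta$-eigenspace to be nonzero. Your appeal to the normal basis theorem ($K\cong K^G[G]$, so every eigenspace of $\sigma^*$ is one-dimensional) supplies this correctly. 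Second, you use the constant $1$ rather than a nonconstant invariant $v$ as second coordinate, which is harmless for the statement.

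There is one slip in your ``concrete'' Lagrange-resolvent justification. The $K^G$-span of the conjugates of a \emph{primitive} element need not be all of $K$, so its $\zeta$-resolvent can vanish. For example, on $\mathbb{P}^1$ with $\sigma(z)=iz$, the element $g=z^2+z^3$ has four distinct conjugates and is therefore primitive. Its conjugates span only $K^G z^2\oplus K^G z^3$, and
\[
\sum_{j=0}^{3} i^{-j}(\sigma^*)^j g=\sum_{j=0}^{3}\left(i^{j}z^2+(-1)^j z^3\right)=0 .
\]
To fix this, take $g$ to be a normal basis generator. Alternatively, note that $\sum_j \zeta^{-j}(\sigma^*)^j$ is a nontrivial combination of distinct automorphisms of $K$, hence not identically zero by Dedekind's independence of characters, and choose $g$ outside its kernel. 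Similarly, a dimension count alone does not make every eigenspace one-dimensional; the nonvanishing of each eigenspace is exactly what the normal basis theorem provides.
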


\begin{proof}
Let $L$ be the function field of $Z$ and $K = L^{\sigma^*}$ be the subfield fixed by the automorphism $\sigma^*:L\rightarrow L$. Since $\sigma^*$ has order $n$, the extension $L/K$ is cyclic Galois of degree $n$. Because $\characteristic(K)=0$ and $K$ contains all $n$-th roots of unity, Kummer theory provides a $u\in L^\times$ such that $L=K(u)$ and $u^n\in K^\times$.

We now claim that $\sigma^*(u)=\zeta u$ for some primitive $n$-th root of unity $\zeta\in \mathbb{C}$. Note that
\[
\sigma^*(u)^n = \sigma^*(u^n) = u^n,
\]
where the last equality follows from the fact that $u^n\in K$. Therefore, $\sigma^*(u)^n/u^n = 1$, or equivalently, $(\sigma^*(u)/u)^n=1$. Thus $\sigma^*(u)/u=\zeta$, where $\zeta$ is an $n$-th root of unity in $L$. Since the constant field of $L$ is $\mathbb{C}$, it follows that $\zeta\in\mathbb{C}$. If $\zeta$ had order $d < n$, then $(\sigma^*)^d$ would act trivially on $K(u)=L$, contradicting our hypothesis that $\sigma^*$ has order $n$. Therefore, $\zeta$ is a primitive $n$-th root of unity.

Now, choose any nonconstant $v\in K$ and define the rational map $\phi:Z\dashrightarrow\cp$ by $\mathbf{p}\mapsto [u(\mathbf{p}):v(\mathbf{p}):1]$. Let $U\subset Z$ be the set where $u$ and $v$ are regular. Then on $U\cap\sigma^{-1}(U)$
\[
(\phi\circ\sigma)(\mathbf{p}) = [(\sigma^*u)(\mathbf{p}):(\sigma^*v)(\mathbf{p}):1] = [\zeta u(\mathbf{p}):v(\mathbf{p}):1] = (\tau\circ\phi)(\mathbf{p}).
\]
Since $Z$ is a smooth projective curve and $\cp$ is proper, the rational map $\phi$ extends uniquely to a morphism, still denoted $\phi$, from $Z$ to $\cp$. Because $\phi\circ \sigma$ and $\tau\circ\phi$ agree on a dense open subset of $Z$, they are equal on all of $Z$. Finally, since $v$ was chosen to be nonconstant, $\phi$ is nonconstant.
\end{proof}

\begin{rem}
It is worth mentioning that the previous theorem also allows for the visualization of automorphisms under the more traditional projections $(u_0,u_1,v_0,v_1)\mapsto(u_0,u_1,v_0)$. However, these will not naturally extend to $\cp$ and therefore give a more limited view of the symmetries involved.
\end{rem}

To apply the above theorem, we must have a function field that is manageable to work with. In such cases, however, the theorem does provide a pathway to visualize the automorphism as a rotation of $\widetilde{\alpha}(\phi(Z))$.

\begin{exmp}
Let $Z=\mathbb{P}_\mathbb{C}^1$, $n>1$, and $\zeta=e^{2\pi i/n}$. Consider the automorphism of $Z$'s function field, $\mathbb{C}(z)$, given by
\[
\sigma^*(z)=\frac{\zeta z}{(\zeta-1)z+1}.
\]
A simple inductive argument yields
\[
\left(\sigma^*\right)^j(z)=\frac{\zeta^j z}{(\zeta^j-1)z+1}.
\]
Thus, $\sigma^*$ has order $n$.

In the spirit of the preceding theorem, we wish to construct a $u\in \mathbb{C}(z)$ such that $\sigma^*(u)=\zeta u$, and a generator $v$ of $\mathbb{C}(z)^{\sigma^*}$. To begin, we find $u$ by diagonalizing $\sigma^*$.
\begin{align*}
\begin{pmatrix}
\zeta & 0\\
\zeta-1 & 1
\end{pmatrix}&=
\begin{pmatrix}
1 & 0\\
1 & -1
\end{pmatrix}
\begin{pmatrix}
\zeta & 0\\
0 & 1
\end{pmatrix}
\begin{pmatrix}
1 & 0\\
1 & -1
\end{pmatrix}\\
\begin{pmatrix}
1 & 0\\
1 & -1
\end{pmatrix}
\begin{pmatrix}
\zeta & 0\\
\zeta-1 & 1
\end{pmatrix}&=
\begin{pmatrix}
\zeta & 0\\
0 & 1
\end{pmatrix}
\begin{pmatrix}
1 & 0\\
1 & -1
\end{pmatrix}.
\end{align*}
Let $u=z/(z-1)$. Then, by the above diagonalization,
\[
\sigma^*(u(z)) = u(\sigma^*(z))=\zeta u(z).
\]
Thus the field fixed by $\sigma^*$ is $\mathbb{C}(z)^{\sigma^*}=\mathbb{C}(u^n)$. We can now select $v$ to be any generator of $\mathbb{C}(u^n)$. For instance, take $v=u^n/(u^n-1)$. Then the rational map $\phi:Z\dashrightarrow\cp$, given by $\mathbf{p}\mapsto[u(\mathbf{p}):v(\mathbf{p}):1]$, extends to a morphism $\phi:Z\rightarrow \mathbf{V}(vu^n-vw^n-u^nw)\subset\cp$. Moreover, 
\[
(\phi\circ\sigma)(\mathbf{p})=[(\sigma^*u)(\mathbf{p}):(\sigma^*v)(\mathbf{p}):1] = [\zeta u(\mathbf{p}): v(\mathbf{p}):1].
\]
By Theorem \ref{thm:diagonal_automorphism}, $\sigma$ descends to a rotation of $\widetilde{\alpha}(\phi(Z))$ about the $x$-axis by $-2\pi/n$. To illustrate a particular instance, we let $n=3$.
\begin{figure}[!htb]
    \centering
    \includegraphics[height=1.5in]{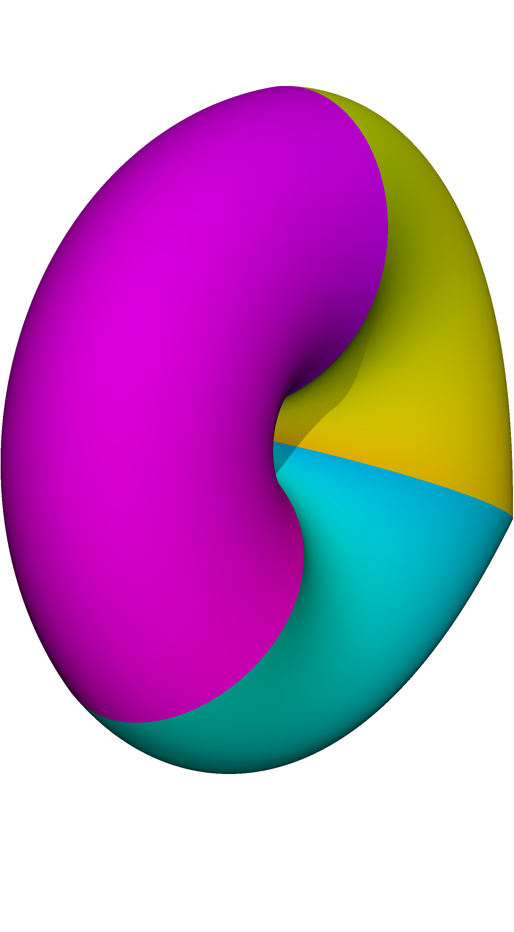}\hfill
    \includegraphics[height=1.5in]{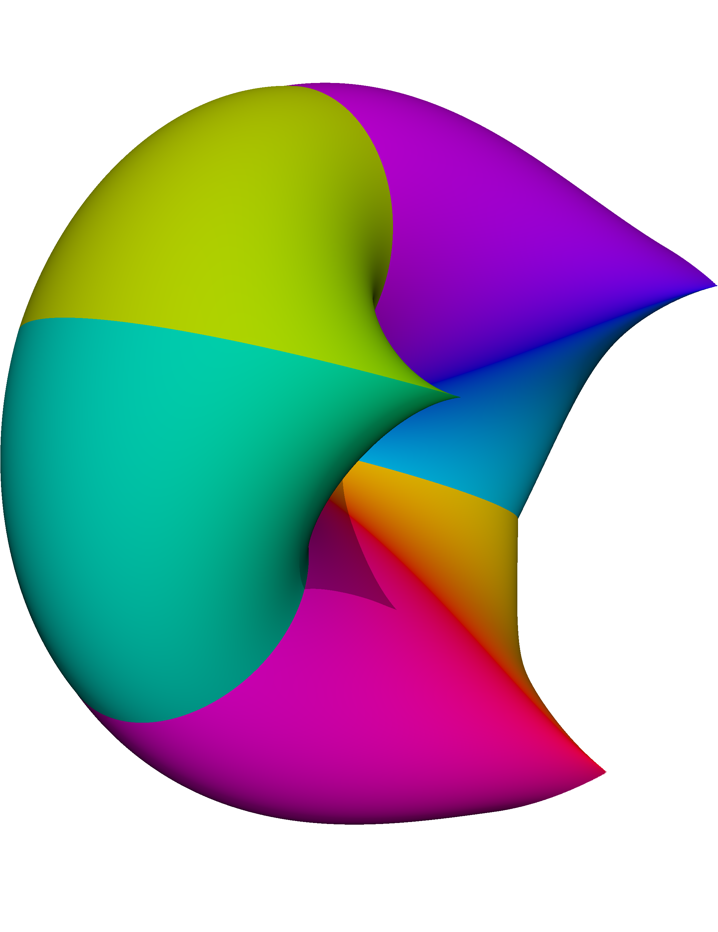}\hfill
    \includegraphics[height=1.5in]{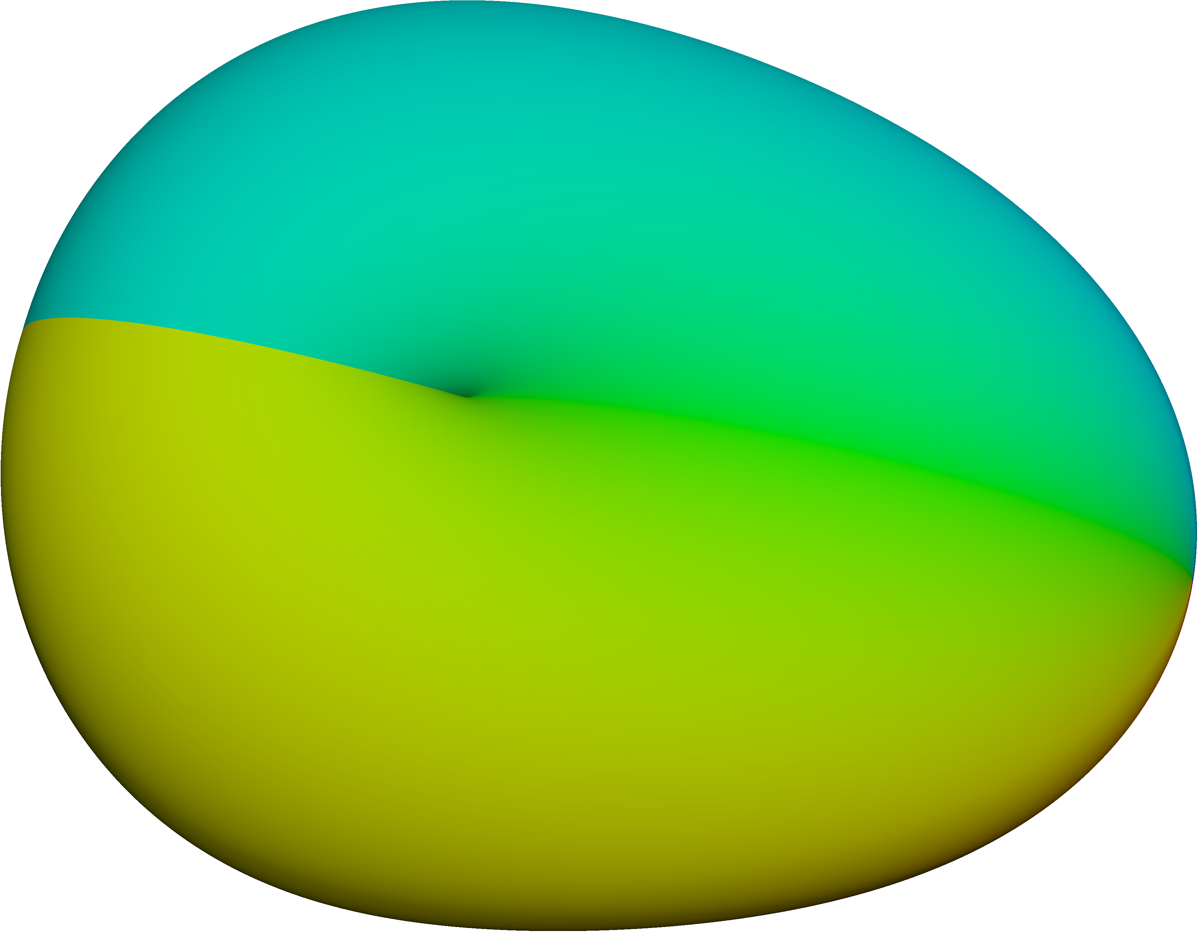}\hfill
    \includegraphics[height=1.5in]{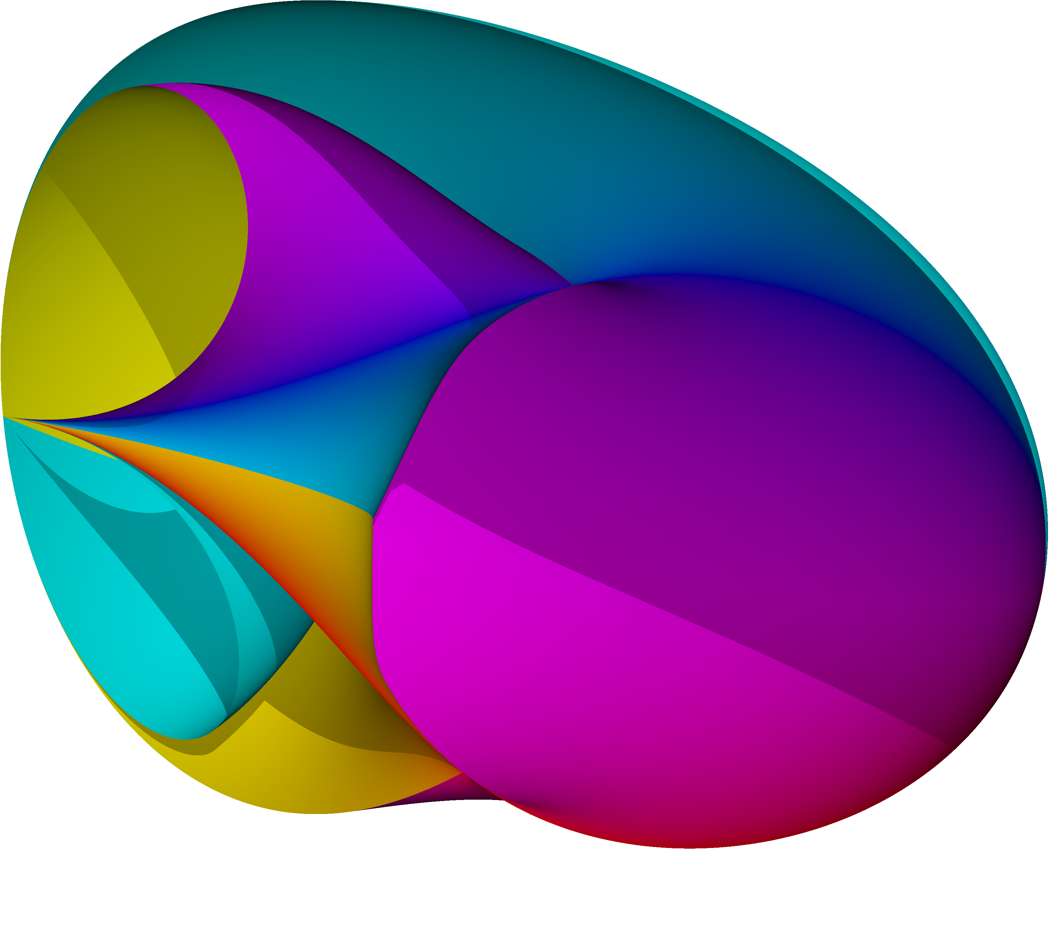}
    \caption{Ray-traced views of $\partial S_f(2)$, $\partial S_f(3)$, $\partial S_f(4)$, and a cutaway view of their union.}
\end{figure}

Note that we could just as well have chosen $v=u^n$. The above $v$ was selected for aesthetic reasons.
\end{exmp}

\subsection{Normal vectors}

At a smooth point of an implicitly defined surface in $\mathbb{R}^3$, the gradient of the defining equation provides a normal vector. For $\alpha(X)$, however, we generally do not have a convenient implicit equation. We therefore compute normals indirectly, using the complex tangent line to $X$. When $\alpha$ is an immersion and a local embedding near a point of $X$, the image of the tangent line is an ellipsoid that is tangent to $\alpha(X)$, so a normal vector to that ellipsoid is also normal to $\alpha(X)$. Theorem \ref{thm:normal_vector} provides a formula for computing these normal vectors to $\alpha(X)$ in terms of the partial derivatives of $f(u,v)$.

\begin{prop}\label{prop:line-ellipsoid}
Let $a,b,c\in\mathbb{C}$ with $c\neq 0$ and $(a,b)\neq (0,0)$. The image under $\alpha$ of the complex line
\[
au+bv=c
\]
restricted to $\A$ is the surface in $B$ defined by
\begin{equation}\label{eq:ellipse_eq}
|ax+b(y+iz)|^2=|c|^2(x-x^2-y^2-z^2),
\end{equation}
which, when expanded, gives the ellipsoid
\[
|a|^2x^2+|b|^2y^2+|b|^2z^2+2xy\Real(a\bar{b})+2xz\Imag(a\bar{b})=|c|^2(x-x^2-y^2-z^2).
\]
\end{prop}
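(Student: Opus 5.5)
The plan is to prove both inclusions through the explicit inverse description in Theorem \ref{thm:uv_formula}. Let $\mathbf{p}=(x,y,z)\in B$. By Theorem \ref{thm:uv_formula} and Remark \ref{rem:comp_ident}, the fibre $\alpha^{-1}(\mathbf{p})\cap\A$ is exactly the circle $\{\beta(\mathbf{p},\lambda):|\lambda|=1\}$. So $\mathbf{p}$ lies in the image of the line $au+bv=c$ if and only if some $\lambda\in S^1$ satisfies
\[
\frac{\lambda\,(ax+b(y+iz))}{\sqrt{x-x^2-y^2-z^2}}=c .
\]
The existence of such a $\lambda$ is a condition on moduli only. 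Since $|\lambda|=1$, a solvable equation forces $|ax+b(y+iz)|=|c|\sqrt{x-x^2-y^2-z^2}$. Conversely, if the moduli agree, the common modulus is nonzero: the right side is positive because $c\neq 0$ and $\mathbf{p}\in B$. We can then take $\lambda=c\sqrt{x-x^2-y^2-z^2}/(ax+b(y+iz))$, which has modulus one. Squaring the moduli condition gives Equation \eqref{eq:ellipse_eq}.

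For the forward inclusion, take $(u,v)\in\A$ on the line. Lemma \ref{lem:magnitude} puts $\alpha(u,v)$ in $B$, and Theorem \ref{thm:uv_formula} writes $(u,v)=\beta(\mathbf{p},\lambda)$ with $|\lambda|=1$. The equation then holds, so $\mathbf{p}$ satisfies \eqref{eq:ellipse_eq}.

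For the reverse inclusion, take $\mathbf{p}\in B$ satisfying \eqref{eq:ellipse_eq} and choose $\lambda$ as above. The point $\beta(\mathbf{p},\lambda)$ lies on the line. Its $u$-coordinate is $xR\lambda/x\neq 0$ because $x>0$ on $B$, so the point lies in $\A$. By Remark \ref{rem:comp_ident} it maps to $\mathbf{p}$.

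The expanded form comes from direct expansion using $|w_1+w_2|^2=|w_1|^2+|w_2|^2+2\Real(w_1\overline{w_2})$ with $w_1=ax$ and $w_2=b(y+iz)$. The cross term is $2x\Real(a\bar b(y-iz))=2xy\Real(a\bar b)+2xz\Imag(a\bar b)$. This yields the displayed quadric.

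To justify calling it an ellipsoid, move everything to one side. The quadratic part $|ax+b(y+iz)|^2+|c|^2(x^2+y^2+z^2)$ is positive definite because $|c|^2>0$. A positive definite quadric with a real point, namely the origin, is an ellipsoid, possibly degenerate to a point; here it is nondegenerate since it contains points of $B$. The only mild subtleties are tracking that $c\neq 0$ forces nonvanishing of $ax+b(y+iz)$, and noting that the surface lies in $\overline{B}$. I expect neither to be a real obstacle; the main step is the modulus-only reduction via the $S^1$-fibre.
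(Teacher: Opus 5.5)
Your proposal is correct and follows essentially the same route as the paper. You write any preimage as $\beta(\mathbf{p},\lambda)$ with $|\lambda|=1$ via Theorem~\ref{thm:uv_formula} and reduce $au+bv=c$ to the modulus condition $|ax+b(y+iz)|=|c|\sqrt{x-x^2-y^2-z^2}$. For the converse you solve explicitly for $\lambda=c\sqrt{x-x^2-y^2-z^2}/(ax+b(y+iz))$, using $c\neq 0$ to keep the denominator nonzero. Your extra checks fill in details the paper leaves implicit: that the constructed point has $u\neq 0$, the expansion of the cross term, and the positive-definiteness argument for the word ``ellipsoid''.
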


\begin{proof}
Let $(u,v)\in \A$ lie on the given line and $\alpha(u,v)=(x,y,z)\in B$. Theorem \ref{thm:uv_formula} gives a $\lambda\in S^1$ such that
\[
(u,v)=\left(\frac{x\lambda}{\sqrt{x-x^2-y^2-z^2}},\frac{(y+iz)\lambda}{\sqrt{x-x^2-y^2-z^2}}\right).
\]
Substituting the above into $au+bv=c$ and clearing denominators yields
\[
\lambda\left(ax+b(y+iz)\right)=c\sqrt{x-x^2-y^2-z^2}.
\]
Taking the square of the absolute value of both sides and using the fact that $|\lambda|=1$ gives the desired equation.

Conversely, suppose $(x,y,z)\in B$ satisfies Equation \eqref{eq:ellipse_eq}. Since $c\neq 0$ and $x-x^2-y^2-z^2>0$, the quantity $ax+b(y+iz)$ is nonzero. Define
\[
\lambda=\frac{c\sqrt{x-x^2-y^2-z^2}}{ax+b(y+iz)}.
\]
Then by the assumption that $(x,y,z)$ satisfies Equation \eqref{eq:ellipse_eq}, $|\lambda|=1$. Setting $(u,v)=\beta(x,y,z,\lambda)$, by Remark \ref{rem:comp_ident} we have $\alpha(u,v)=(x,y,z)$ and
\begin{align*}
au+bv &= \frac{ax\lambda}{\sqrt{x-x^2-y^2-z^2}} + \frac{b(y+iz)\lambda}{\sqrt{x-x^2-y^2-z^2}}\\
&=\frac{\lambda(ax+b(y+iz))}{\sqrt{x-x^2-y^2-z^2}}\\
&=c.
\end{align*}
Thus $(x,y,z)$ lies in the image of the line.
\end{proof}

\begin{rem}
The quadric in Proposition \ref{prop:line-ellipsoid} is contained in $\overline{B}$. Indeed, rewriting its equation gives
\[
x=\left|\frac{ax+b(y+iz)}{c}\right|^2+x^2+y^2+z^2\ge x^2+y^2+z^2.
\]
\end{rem}

\begin{prop}\label{prop:jacobian}
Viewed as a real function, $\alpha:\mathbb{R}^4\rightarrow\mathbb{R}^3$ has a Jacobian of maximal rank at every point outside of $\{(0,0)\}\times \mathbb{R}^2$.
\end{prop}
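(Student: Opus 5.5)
Since $\alpha:\mathbb{R}^4\to\mathbb{R}^3$, maximal rank means rank $3$. We need to show that at every $(u,v)$ with $u\neq 0$ the differential $d\alpha$ is surjective onto $\mathbb{R}^3$. Rather than writing out the full $3\times 4$ real Jacobian, we will exhibit three tangent directions in $\mathbb{C}^2$ whose images under $d\alpha$ are linearly independent. It is convenient to factor $\alpha=\Phi\circ h$, where
\[
h(u,v)=\left(|u|^2,\ \bar u v,\ |u|^2+|v|^2\right)\in\mathbb{R}\times\mathbb{C}\times\mathbb{R},\qquad \Phi(s,w,r)=\left(\frac{s}{1+r},\frac{w}{1+r}\right).
\]
This factorization separates the algebraic part from the normalization.

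The key simplification is the structure of the fibers. By Corollary \ref{cor:circle_action}, $\alpha$ is invariant under the $S^1$-action, so the direction $i(u,v)$ lies in the kernel. It therefore suffices to check directions transverse to this orbit. We will use three explicit curves through $(u,v)$:
\begin{itemize}
\item radial scaling $(tu,tv)$ with $t>0$;
\item the phase rotation $(u,e^{i\phi}v)$;
\item a variation of the ratio $v/u$, namely $(u,v+\epsilon u)$ with $\epsilon$ real.
\end{itemize}
If needed, we can alternatively use the parameterization from Theorem \ref{thm:uv_formula}. Proposition \ref{prop:image_ball} and Theorem \ref{thm:uv_formula} show that on $\A$ modulo $S^1$ the map $\alpha$ has the explicit continuous inverse $(x,y,z)\mapsto\beta(x,y,z,1)$, and this inverse is smooth on $B$ because $x-x^2-y^2-z^2>0$ there.

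This suggests the cleanest route. The map $\beta(\cdot,1):B\to\A$ is smooth and satisfies $\alpha\circ\beta(\cdot,1)=\mathrm{id}_B$ (Remark \ref{rem:comp_ident}). Given any point $(u,v)\in\A$, we can rotate by $\lambda\in S^1$, using the $S^1$-invariance of $\alpha$ together with the fact that multiplication by $\lambda$ is a real-linear isometry of $\mathbb{R}^4$. This lets us assume $(u,v)=\beta(\mathbf{p},1)$ with $\mathbf{p}=\alpha(u,v)$. The chain rule then gives
\[
d\alpha_{(u,v)}\circ d\beta(\cdot,1)_{\mathbf{p}}=I_3,
\]
so $d\alpha_{(u,v)}$ is surjective, i.e. it has rank $3$. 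At a general point $\lambda(u,v)$ we have $\alpha\circ m_\lambda=\alpha$, where $m_\lambda$ is multiplication by $\lambda$. Hence $d\alpha_{\lambda(u,v)}\circ m_\lambda=d\alpha_{(u,v)}$, and since $m_\lambda$ is invertible the rank is still $3$.

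The only step requiring care is the smoothness of $\beta(\cdot,1)$ on $B$. This is immediate: its components are rational functions in $x,y,z$ times $(x-x^2-y^2-z^2)^{-1/2}$, and the expression under the root is strictly positive on $B$. Note that points with $u=0$ are excluded precisely because they lie outside $\A$, where Theorem \ref{thm:uv_formula} does not apply. I expect no serious obstacle. As a sanity check, one could verify directly that the images of the three curves listed above are independent, but the section argument avoids that computation entirely.
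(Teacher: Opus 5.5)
Your proof is correct, and it takes a genuinely different route from the paper's.

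\textbf{The paper's route.} The paper writes $\alpha$ in real coordinates and computes the four signed $3\times 3$ minors of the Jacobian. In Equation \eqref{eq:jacobians} they come out as $(-u_1,u_0,-v_1,v_0)$ times $2|u|^2/(1+|u|^2+|v|^2)^4$, which is nonzero exactly when $u\neq 0$; the computation itself is left to the reader.

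\textbf{Your route.} You replace that computation with a structural argument. The map $\beta(\cdot,1)$ is smooth on the open set $B$ and satisfies $\alpha\circ\beta(\cdot,1)=\mathrm{id}_B$ (Remark \ref{rem:comp_ident}). The chain rule then gives $d\alpha\circ d\beta=I_3$, so $d\alpha$ is surjective along the section. The identity $d\alpha_{\lambda q}\circ m_\lambda=d\alpha_q$, which comes from $S^1$-invariance, transports this to every point of $\mathbb{C}^*\times\mathbb{C}$. Every such point has the form $\lambda\beta(\mathbf{p},1)$ by Theorem \ref{thm:uv_formula}.

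\textbf{Comparison.} Your route needs no determinant calculations and reuses results already proved. Combined with your observation that $i(u,v)$ lies in the kernel, rank $3$ immediately gives $\ker d\alpha_{(u,v)}=\mathbb{R}\,i(u,v)$. That is exactly what the paper later extracts from Equation \eqref{eq:jacobians} and Lemma \ref{lem:kernel} in the immersion theorem. In return, the paper's explicit minors give a concrete formula and also the converse: all minors vanish when $u=0$, so the rank drops exactly on $\{(0,0)\}\times\mathbb{R}^2$. Your argument does not address this, but the proposition does not require it.

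\textbf{A correction to an aside.} The factorization $\alpha=\Phi\circ h$ and the three test curves play no role and can be deleted. The sanity check you propose would in fact fail. With $D=1+|u|^2+|v|^2$, the images under $d\alpha$ of the velocities of your three curves are $\frac{2}{D}(x,y,z)$, $(0,-z,y)$ and $(-2xy,\,x-2y^2,\,-2yz)$. Their determinant is $-2x^2y/D$, so they are dependent wherever $y=0$, i.e.\ wherever $\Real(\bar u v)=0$. In particular, at $v=0$ the phase-rotation curve is constant. The section argument is what proves the result, so nothing is lost, but that remark should not stay in as written.
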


\begin{proof}
Writing $\alpha$ as a real function gives
\[
\alpha(u_0, u_1, v_0, v_1) = \left(\frac{u_0^2+u_1^2}{1+u_0^2+u_1^2+v_0^2+v_1^2}, \frac{u_0v_0+u_1v_1}{1+u_0^2+u_1^2+v_0^2+v_1^2}, \frac{u_0v_1-u_1v_0}{1+u_0^2+u_1^2+v_0^2+v_1^2}\right).
\]
Let $|J_{\alpha, k}|$ denote the determinant of the minor of the Jacobian with the $k$-th column removed. Then
\begin{equation}\label{eq:jacobians}
\big(-|J_{\alpha, 1}|, |J_{\alpha, 2}|, -|J_{\alpha, 3}|, |J_{\alpha, 4}|\big) = (-u_1, u_0, -v_1, v_0)\frac{2(u_0^2+u_1^2)}{(1+u_0^2+u_1^2+v_0^2+v_1^2)^4},
\end{equation}
where the calculations are left to the reader. However, the right-hand side is the zero vector if and only if $u_0 = u_1 = 0$. If it is not the zero vector, then the determinant of one of the $3\times 3$ minors is nonzero, and the rank of the Jacobian must be $3$. Therefore, the Jacobian has maximal rank outside of the set $u_0=u_1=0$.
\end{proof}

We recall the following fact from linear algebra.

\begin{lemma}\label{lem:kernel}
Let $A$ be an $n\times (n+1)$ real matrix and $|A_k|$ denote the determinant of the minor obtained by removing the $k$-th column. Then the vector
\[
\vec{c} = \big((-1)^{n+2}|A_1|, (-1)^{n+3}|A_2|,\ldots, (-1)^{2n+2}|A_{n+1}|\big)
\]
is in the kernel of $A$.
\end{lemma}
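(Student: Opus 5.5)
The plan is to recognize the statement as cofactor (Laplace) expansion of an augmented square determinant. Fix an index $i\in\{1,\dots,n\}$ and form the $(n+1)\times(n+1)$ matrix $\widetilde{A}_i$ whose first row is the $i$-th row $(a_{i1},\dots,a_{i,n+1})$ of $A$ and whose remaining $n$ rows are the rows of $A$ in order. Since $\widetilde{A}_i$ has two equal rows, $\det\widetilde{A}_i=0$. The $i$-th entry of $A\vec{c}$ will then be identified, up to a global sign independent of $i$, with $\det\widetilde{A}_i$.

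The key step is to expand $\det\widetilde{A}_i$ along its first row:
\[
0=\det\widetilde{A}_i=\sum_{k=1}^{n+1}(-1)^{1+k}a_{ik}\,|A_k|.
\]
The $(1,k)$ minor of $\widetilde{A}_i$ is exactly $A$ with its $k$-th column removed, whose determinant is $|A_k|$. Next, compare signs with $\vec{c}$. The $k$-th component of $\vec{c}$ is $(-1)^{n+1+k}|A_k|=(-1)^{n}(-1)^{1+k}|A_k|$. Hence
\[
(A\vec{c})_i=\sum_{k=1}^{n+1}a_{ik}c_k=(-1)^n\sum_{k=1}^{n+1}(-1)^{1+k}a_{ik}|A_k|=(-1)^n\det\widetilde{A}_i=0.
\]
Since this holds for every $i$, $\vec{c}$ lies in the kernel of $A$.

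The only real obstacle is bookkeeping the sign pattern $(-1)^{n+1+k}$ stated in the lemma against the cofactor signs $(-1)^{1+k}$. The two differ by the constant factor $(-1)^n$, which does not affect membership in the kernel. One could equally place the duplicated row last, giving cofactor signs $(-1)^{n+1+k}$ exactly, which matches the stated formula on the nose. I would use that placement to avoid any sign discussion. I would also remark that no hypothesis on rank is needed, since the zero vector trivially lies in the kernel, although in the application (Proposition \ref{prop:jacobian}) the vector is nonzero and therefore spans the kernel.
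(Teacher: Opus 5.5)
Your proposal is correct and follows the paper's proof: the paper appends the $j$-th row of $A$ as the last row of a square matrix and expands along that repeated row to get a vanishing determinant, which is exactly your ``place the duplicated row last'' variant. With that placement the cofactor signs are $(-1)^{n+1+k}$, matching the stated formula with no extra sign factor. Your first-row version, which differs only by the global factor $(-1)^n$, is equally valid.
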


\begin{proof}
It suffices to show that the rows of $A$ are all orthogonal to $\vec{c}$. Let $(a_{j1}, a_{j2},\ldots,a_{j(n+1)})$ be the $j$-th row of $A$. Then
\begin{align*}
(a_{j1}, a_{j2},\ldots,a_{j(n+1)})\cdot \vec{c} &= \sum_{k=1}^{n+1} (-1)^{n+1+k}a_{jk}|A_k|\\
&=\begin{vmatrix}
a_{11} & a_{12} & \cdots & a_{1(n+1)}\\
\vdots & \vdots & \ddots & \vdots\\
a_{n1} & a_{n2} & \cdots & a_{n(n+1)}\\
a_{j1} & a_{j2} & \cdots & a_{j(n+1)}
\end{vmatrix}.
\end{align*}
However, this determinant is $0$ as the $j$-th row of $A$ appears twice. Therefore, $\vec{c}$ is orthogonal to every row of $A$ and thus lies in the kernel.
\end{proof}

\begin{thm}
Let $S_1$ be the set of singular points of $X\subset \mathbb{C}^2$ and $S_2\subset X$ be the set of smooth points where the complex tangent line to $X$ contains the origin. Then $\alpha : (X\cap\A)\setminus (S_1\cup S_2) \rightarrow \mathbb{R}^3$ is an immersion of real manifolds.
\end{thm}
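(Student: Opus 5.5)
The plan is to reduce the immersion question to a computation of the kernel of the real Jacobian of $\alpha$, which Proposition \ref{prop:jacobian} and Lemma \ref{lem:kernel} have essentially already done. Fix a point $\mathbf{p}=(u,v)\in (X\cap\A)\setminus(S_1\cup S_2)$. Since $\mathbf{p}$ is a smooth point of $X$, the tangent space $T_{\mathbf{p}}X$ is the complex line $\{(\xi,\eta):f_u(\mathbf{p})\xi+f_v(\mathbf{p})\eta=0\}$, viewed as a real $2$-plane in $\mathbb{R}^4$. The map $\alpha|_X$ is an immersion at $\mathbf{p}$ exactly when the differential $d\alpha_{\mathbf{p}}$ restricted to $T_{\mathbf{p}}X$ is injective. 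Equivalently, $T_{\mathbf{p}}X\cap\ker d\alpha_{\mathbf{p}}=\{0\}$.

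The first step is to identify $\ker d\alpha_{\mathbf{p}}$. Because $u\neq 0$, Proposition \ref{prop:jacobian} says the $3\times 4$ Jacobian has rank $3$, so its kernel is one-dimensional. Lemma \ref{lem:kernel} with $n=3$ gives the kernel vector
\[
\big(-|J_{\alpha,1}|,|J_{\alpha,2}|,-|J_{\alpha,3}|,|J_{\alpha,4}|\big).
\]
By Equation \eqref{eq:jacobians}, this is a positive multiple of $(-u_1,u_0,-v_1,v_0)$. Under the identification of $\mathbb{R}^4$ with $\mathbb{C}^2$, that vector is $(iu,iv)=i(u,v)$. So the kernel is the real span of $i\mathbf{p}$, which is the tangent direction to the $S^1$-orbit $\lambda\mapsto\lambda\mathbf{p}$. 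This agrees with Corollary \ref{cor:circle_action}. The sign bookkeeping in matching Lemma \ref{lem:kernel} to Equation \eqref{eq:jacobians} is the only place needing care, though the sign does not matter for the span. As an independent check, one can differentiate $\alpha(e^{it}\mathbf{p})=\alpha(\mathbf{p})$ at $t=0$, which shows directly that $i\mathbf{p}$ lies in the kernel; the rank statement then says it spans the kernel.

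The second step uses that $T_{\mathbf{p}}X$ is a complex subspace. The kernel meets it nontrivially if and only if $i\mathbf{p}\in T_{\mathbf{p}}X$, which holds if and only if $\mathbf{p}\in T_{\mathbf{p}}X$. The affine complex tangent line to $X$ at $\mathbf{p}$ is $\mathbf{p}+T_{\mathbf{p}}X$, and it contains the origin if and only if $-\mathbf{p}\in T_{\mathbf{p}}X$, i.e.\ if and only if $\mathbf{p}\in T_{\mathbf{p}}X$. In terms of $f$, this is the condition $uf_u(\mathbf{p})+vf_v(\mathbf{p})=0$. Since $\mathbf{p}\notin S_2$, this fails, so $d\alpha_{\mathbf{p}}|_{T_{\mathbf{p}}X}$ is injective.

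Since $\mathbf{p}$ was arbitrary, $\alpha$ is an immersion on $(X\cap\A)\setminus(S_1\cup S_2)$. That set is an open subset of the smooth locus, hence a real $2$-manifold. I expect no real obstacle beyond the kernel identification in the first step.
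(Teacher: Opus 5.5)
Your proposal is correct and follows the paper's proof. You identify $\ker d\alpha_{\mathbf{p}}$ as the real span of $i\mathbf{p}$ using Proposition \ref{prop:jacobian}, Lemma \ref{lem:kernel} and Equation \eqref{eq:jacobians}, then show this direction lies in the tangent plane only when the complex tangent line passes through the origin. The only difference is cosmetic: you go from $i\mathbf{p}\in T_{\mathbf{p}}X$ to $\mathbf{p}\in T_{\mathbf{p}}X$ by complex-linearity of the tangent space, whereas the paper argues that the real line $\mathbf{p}(1+it)$ lies in a unique complex line, namely $\mathbb{C}\mathbf{p}$.
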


\begin{proof}
Viewing $U = (X\cap\A)\setminus (S_1\cup S_2)$ as a real manifold, fix $\mathbf{p} = (u_0, u_1, v_0, v_1)\in U$. Since $u\neq 0$ on $U$, the Jacobian of $\alpha$ has maximal rank at $\mathbf{p}$ by Proposition \ref{prop:jacobian}. Therefore, by Lemma \ref{lem:kernel} and Equation \eqref{eq:jacobians}, a vector is in the kernel if and only if it is a real multiple of $(-u_1, u_0, -v_1, v_0)$. Suppose that the differential of $\alpha|_U$ is not injective at $\mathbf{p}$. Then the real line
\[
(u_0, u_1, v_0, v_1) + t(-u_1, u_0, -v_1, v_0)
\]
must be contained within the tangent plane to $U$ at $\mathbf{p}$. Viewed as a subset of $\mathbb{C}^2$, this is the real line $\ell = (u_0+iu_1, v_0+iv_1)(1+it)$, which is contained within the complex line $(u_0+iu_1, v_0+iv_1)z$, where $z$ runs over all of $\mathbb{C}$. If $\ell$ were contained in another complex line, then we would have two distinct complex lines with infinitely many points in common, which is a contradiction. Therefore, this is the unique complex line containing $\ell$. In particular, it must be the complex tangent line to $U$ at $\mathbf{p}$. Letting $z=0$ we see that the tangent line contains the origin, which contradicts our construction of $U$. Therefore, the differential of $\alpha|_U$ is injective, and $\alpha|_U:U\rightarrow \mathbb{R}^3$ is an immersion.
\end{proof}

Even when $\alpha$ is an immersion, it need not be globally injective. Distinct local branches of $X$ may meet in the image and create pinch-type singularities.

\begin{thm}\label{thm:normal_vector}
Let $\mathbf{p}=(u,v)\in X\cap\A$ be a smooth point, and suppose there is an open neighborhood $U\subset X$ of $\mathbf{p}$ such that $\alpha|_U$ is an embedding. Write
\[
\alpha(\mathbf{p})=(x,y,z),
\]
and let $f_u$ and $f_v$ denote the complex partial derivatives of the defining polynomial $f$. Then a normal vector to $\alpha(U)$ at $\alpha(\mathbf{p})$ is
\begin{align*}
\bigg(&2x|f_u(\mathbf{p})|^2+2y\Real \left(f_u(\mathbf{p})\overline{f_v(\mathbf{p})}\right) + 2z\Imag \left(f_u(\mathbf{p})\overline{f_v(\mathbf{p})}\right)+(2x-1)|uf_u(\mathbf{p})+vf_v(\mathbf{p})|^2,\\
&2y|f_v(\mathbf{p})|^2+2x\Real \left(f_u(\mathbf{p})\overline{f_v(\mathbf{p})}\right) + 2y|uf_u(\mathbf{p})+vf_v(\mathbf{p})|^2,\\
&2z|f_v(\mathbf{p})|^2+2x\Imag \left(f_u(\mathbf{p})\overline{f_v(\mathbf{p})}\right) + 2z|uf_u(\mathbf{p})+vf_v(\mathbf{p})|^2\bigg).
\end{align*}
\end{thm}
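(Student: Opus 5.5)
We follow the strategy announced before Proposition \ref{prop:line-ellipsoid}: replace $X$ near $\mathbf{p}$ by its complex tangent line, map that line by $\alpha$ to an ellipsoid, and take the gradient of the ellipsoid's defining equation at $\alpha(\mathbf{p})$. Write $a=f_u(\mathbf{p})$, $b=f_v(\mathbf{p})$. The complex tangent line to $X$ at $\mathbf{p}$ is $au'+bv'=c$ with $c=au+bv$, and $(a,b)\neq(0,0)$ because $\mathbf{p}$ is smooth.

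\emph{Case $c\neq 0$.} By Proposition \ref{prop:line-ellipsoid}, the image of the tangent line $T$ (restricted to $\A$) is the zero set of
\[
F(x,y,z)=|a|^2x^2+|b|^2(y^2+z^2)+2xy\Real(a\bar b)+2xz\Imag(a\bar b)-|c|^2(x-x^2-y^2-z^2).
\]
A direct differentiation gives
\[
\nabla F=\big(2x|a|^2+2y\Real(a\bar b)+2z\Imag(a\bar b)+(2x-1)|c|^2,\;2y|b|^2+2x\Real(a\bar b)+2y|c|^2,\;2z|b|^2+2x\Imag(a\bar b)+2z|c|^2\big),
\]
which is exactly the vector in the statement.

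We still have to justify that this gradient is normal to $\alpha(U)$ and is nonzero.

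\emph{Tangency.} The real tangent plane $T_{\mathbf{p}}X$ equals the real tangent plane of $T$ at $\mathbf{p}$, since both are the complex line $\{a\,du+b\,dv=0\}$. Hence $d\alpha_{\mathbf{p}}(T_{\mathbf{p}}X)=d\alpha_{\mathbf{p}}(T_{\mathbf{p}}T)$. Because $\alpha|_U$ is an embedding, this image is the $2$-dimensional tangent plane of $\alpha(U)$ at $\alpha(\mathbf{p})$. The composition $F\circ\alpha|_T$ vanishes identically, so $\nabla F(\alpha(\mathbf{p}))$ annihilates $d\alpha_{\mathbf{p}}(T_{\mathbf{p}}T)$. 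It is therefore orthogonal to the tangent plane of $\alpha(U)$.

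\emph{Nonvanishing.} This is the main obstacle. We can recast it: $\nabla F\neq 0$ at $\alpha(\mathbf{p})$ exactly when the ellipsoid is smooth there. We will argue via Euler's relation: $F$ is a quadratic in $(x,y,z)$ with a linear term $-|c|^2x$. Suppose $\nabla F=0$ at a point $q$ with $F(q)=0$. Euler's relation then gives $0=2F(q)-q\cdot\nabla F(q)=-|c|^2x$. This is a contradiction, since $c\neq 0$ and $x>0$ on $B$.

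\emph{Case $c=0$.} Here the tangent line passes through the origin, and $\alpha$ maps it to a segment, so the argument above breaks down. In this case the stated vector reduces to
\[
\big(2x|a|^2+2y\Real(a\bar b)+2z\Imag(a\bar b),\;2y|b|^2+2x\Real(a\bar b),\;2z|b|^2+2x\Imag(a\bar b)\big).
\]
This is the gradient of $|ax+b(y+iz)|^2$, which vanishes at $\alpha(\mathbf{p})$ because $(u,v)$ is proportional to $(x,y+iz)$. Moreover, by the immersion theorem above, $\alpha|_U$ cannot be an immersion at such a point. Hence the embedding hypothesis in fact forces $c\neq0$, and this degenerate case does not occur. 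This matches the excluded set $S_2$.
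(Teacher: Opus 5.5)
Your proposal is correct and follows the paper's own route: apply Proposition~\ref{prop:line-ellipsoid} to the tangent line $f_u(\mathbf{p})(u'-u)+f_v(\mathbf{p})(v'-v)=0$ and take the gradient of the resulting quadric at $\alpha(\mathbf{p})$. Along the way you supply the tangency argument, the nonvanishing argument via Euler's relation, and the exclusion of $c=0$, all of which the paper's one-line proof leaves implicit.

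One small fix: the immersion theorem as stated only says $\alpha$ is an immersion off $S_1\cup S_2$, not that it fails on $S_2$. So to exclude $c=0$, argue directly that $d\alpha_{\mathbf{p}}(i\mathbf{p})=0$ by the $S^1$-invariance of $\alpha$, and that $i\mathbf{p}\in T_{\mathbf{p}}X$ whenever $uf_u(\mathbf{p})+vf_v(\mathbf{p})=0$.
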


\begin{proof}
The formula comes from applying Proposition \ref{prop:line-ellipsoid} to the complex tangent line
\[
f_u(\mathbf{p})(u'-u)+f_v(\mathbf{p})(v'-v)=0
\]
of $X$ at $\mathbf{p}$, and then taking the gradient of the resulting ellipsoid at $\alpha(\mathbf{p})$.
\end{proof}

\section{Visualization}\label{section:visualization}

Rendering images of $\alpha(X)$ appears to encounter many of the same difficulties as other visualization techniques. One could use branch cuts to construct a mesh of $X$ and then apply $\alpha$. Alternatively, if $X$ is rational or admits a convenient parameterization, then creating a mesh and mapping it to $\mathbb{R}^3$ with $\alpha$ is efficient.

In this section we provide two additional methods for rendering $\alpha(X)$. The first, in Section \ref{subsection:mesh_contraction}, contracts meshes of $\partial B$ onto the boundaries $\partial S_f(k)$. This is analogous to using branch cuts in the more traditional projections of Riemann surfaces. The second, in Section \ref{subsection:ray_tracing}, ray traces the image directly in $\mathbb{R}^3$.

The two methods require numerical root finding and root counting, respectively. The particular root-finding and root-counting methods suggested here are not the only possible choices, but in the author's experience they accurately resolve the surface near self-intersections and in regions of fine detail. Our goal is a \emph{visually faithful} depiction of $\alpha(X)$, so at times an arbitrary choice of branch may be required for coloring or shading. Moreover, as with any method based on sampling along a ray, sufficiently narrow features may be missed.

We keep the conventions from the previous sections. Namely, $X\subset\mathbb{C}^2$ is irreducible, not a line through the origin, and defined by a polynomial $f(u,v)$ of degree $n$. We write $Z\subset\cp$ for the projective closure of $X$.

\subsection{Mesh contraction}\label{subsection:mesh_contraction}

Our goal is to render $\widetilde{\alpha}(Z)$. By Lemma \ref{lem:boundary_points} and Theorem \ref{thm:projective_image}, it suffices to construct meshes for the boundaries $\partial S_f(k)$. Note that some of the sets $S_f(k)$ may be empty. Indeed, if $(0,0)\in X$ has multiplicity $m$, then $u=0$ is a root of $f(u,cu)$ of multiplicity at least $m$ for every $c\in\mathbb{C}$, so $N_f(\mathbf{p})\geq m$ for every $\mathbf{p}\in B$. Hence $S_f(k)=\emptyset$ for $1\leq k\leq m$.

For each nonempty $S_f(k)$, a mesh approximating $\partial S_f(k)$ can be constructed by contracting a triangular mesh of $\partial B$ along rays from the origin. To do so, begin by constructing a mesh $\mathcal{T}$ of $\partial B$, containing $(0,0,0)$ as a vertex. Fix any other vertex $\mathbf{p}=(x,y,z)$. Since $\mathbf{p}\in\partial B\setminus\{(0,0,0)\}$, we have $x>0$. Determine the roots of the polynomial
\[
u\mapsto f\left(u,\frac{y+iz}{x}u\right)
\]
and add roots at infinity so that there are $n$ roots counted with multiplicity. Order them by their moduli $|u_1|\leq |u_2|\leq\cdots\leq |u_n|$, where $|\infty|=\infty$. In the author's experience, either the Aberth--Ehrlich method or finding the eigenvalues of the companion matrix was efficient and numerically stable for computing the finite roots.

For finite $u_k$, a straightforward calculation, along with the fact that $x^2+y^2+z^2=x$ on $\partial B$, gives
\[
\alpha\left(u_k,\frac{y+iz}{x}u_k\right) = \frac{|u_k|^2}{x+|u_k|^2}(x, y, z).
\]
For convenience, define $\rho_k = |u_k|^2/(x+|u_k|^2)$ when $u_k$ is finite and $1$ otherwise. Replacing each vertex $\mathbf{p}$ by $\rho_k\mathbf{p}$ gives a mesh approximation of $\partial S_f(k)$.

After moving a vertex, assign a hue using $\arg(u_k)$. If distinct finite roots have the same modulus, the choice of $u_k$, and hence its argument and hue, is not well defined. For finite roots with equal modulus, using the root returned in the $k$-th position consistently produced good results in the author's experience. Additionally, the hue is undefined when $u_k=0$ or $u_k=\infty$, in which case one may omit the saturation. Since all roots are simultaneously computed at each vertex, all of the shell meshes can be constructed at once.

\begin{algorithm}[!ht]
\caption{Mesh contraction}
\begin{algorithmic}[1]
\Function{ContractMeshes}{$f,\mathcal{T},K=\{1,\ldots,n\}$}
    \ForAll{$k\in K$}
        \State $\mathcal{T}_k\gets\mathcal{T}$
    \EndFor
    \ForAll{$\mathbf{p}=(x,y,z)\in\operatorname{Vert}(\mathcal{T})\setminus\{\mathbf{0}\}$}
        \State
        $(u_1,\ldots,u_n)\gets\Call{ProjectiveRoots}{f\left(u,\frac{y+iz}{x}u\right)}$
        \State Sort $u_1,\ldots,u_n$ by modulus
        \ForAll{$k\in K$}
            \State
            $\displaystyle \rho_k\gets
            \begin{cases}
                |u_k|^2/(x+|u_k|^2), & u_k\in\mathbb{C},\\
                1, & u_k=\infty
            \end{cases}$
            \State Move the corresponding vertex of
            $\mathcal{T}_k$ to $\rho_k\mathbf{p}$
            \State Set hue to $\arg(u_k)$, when defined
        \EndFor
    \EndFor
    \State \Return $\{\mathcal{T}_k\}_{k\in K}$
\EndFunction
\end{algorithmic}
\end{algorithm}
\FloatBarrier

The following figures were rendered in MeshLab after being constructed using the above algorithm.

\begin{figure}[!ht]
    \centering
    \begin{subfigure}{0.4\textwidth}
        \centering
        \includegraphics[height=1.5in]{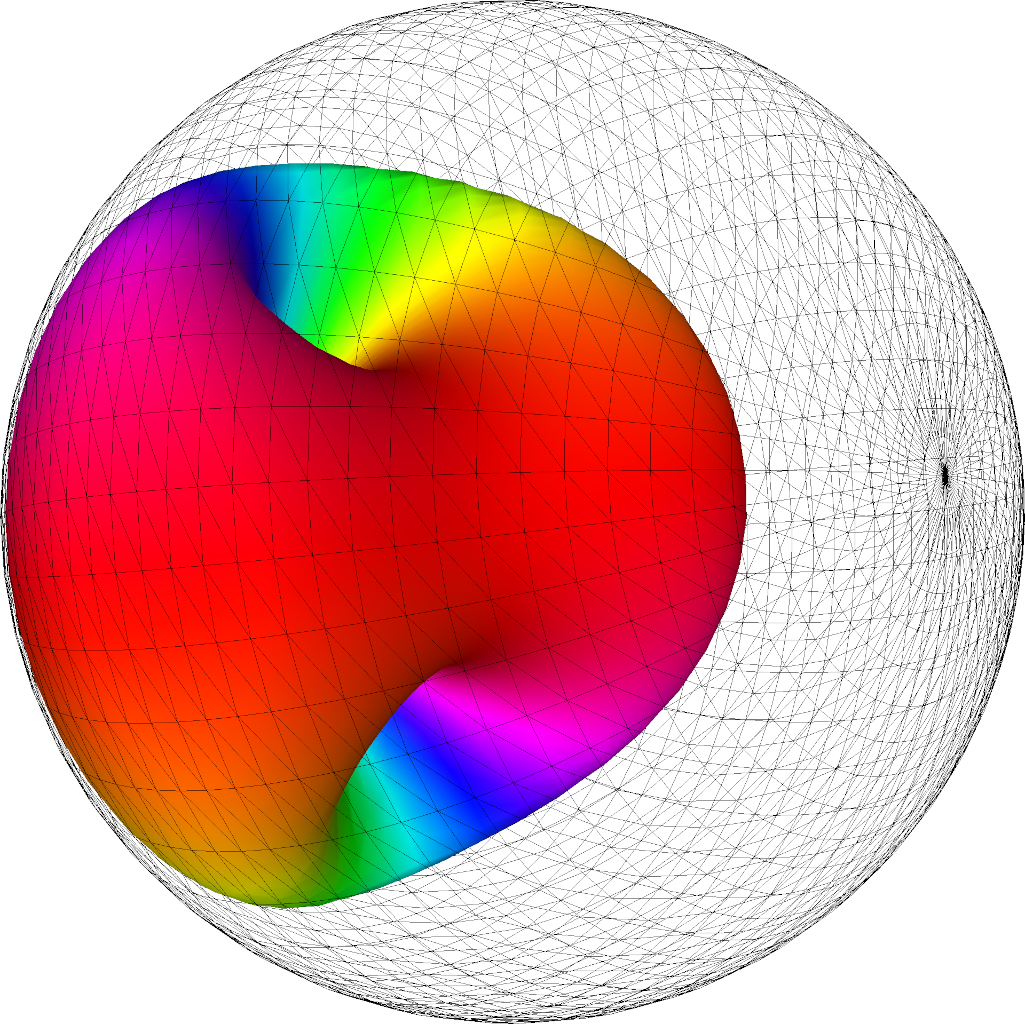}
        \caption{Initial spherical mesh and $\partial S_f(3)$}
    \end{subfigure}
    \hfill
    \begin{subfigure}{0.4\textwidth}
        \centering
        \includegraphics[height=1.5in]{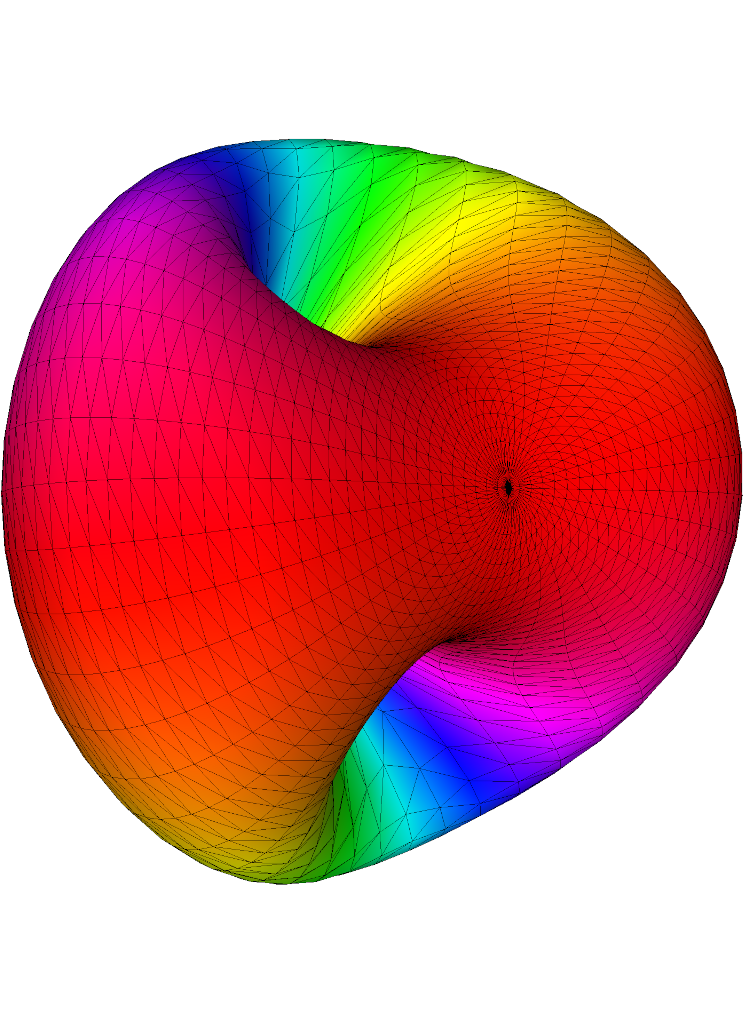}
        \caption{Mesh contracted to $\partial S_f(3)$}
    \end{subfigure}
    \caption{The nodal cubic given by $f(u,v)=v^2-u^2-u^3$. Since $X$ has multiplicity two at $(0,0)$, the sets $S_f(1)$ and $S_f(2)$ are empty. Thus $\widetilde{\alpha}(Z)=\partial S_f(3)$, and only one spherical mesh is contracted.}
\end{figure}
\FloatBarrier

\begin{figure}[!htb]
    \centering
    \begin{subfigure}{0.3\textwidth}
        \centering
        \includegraphics[height=1.5in]{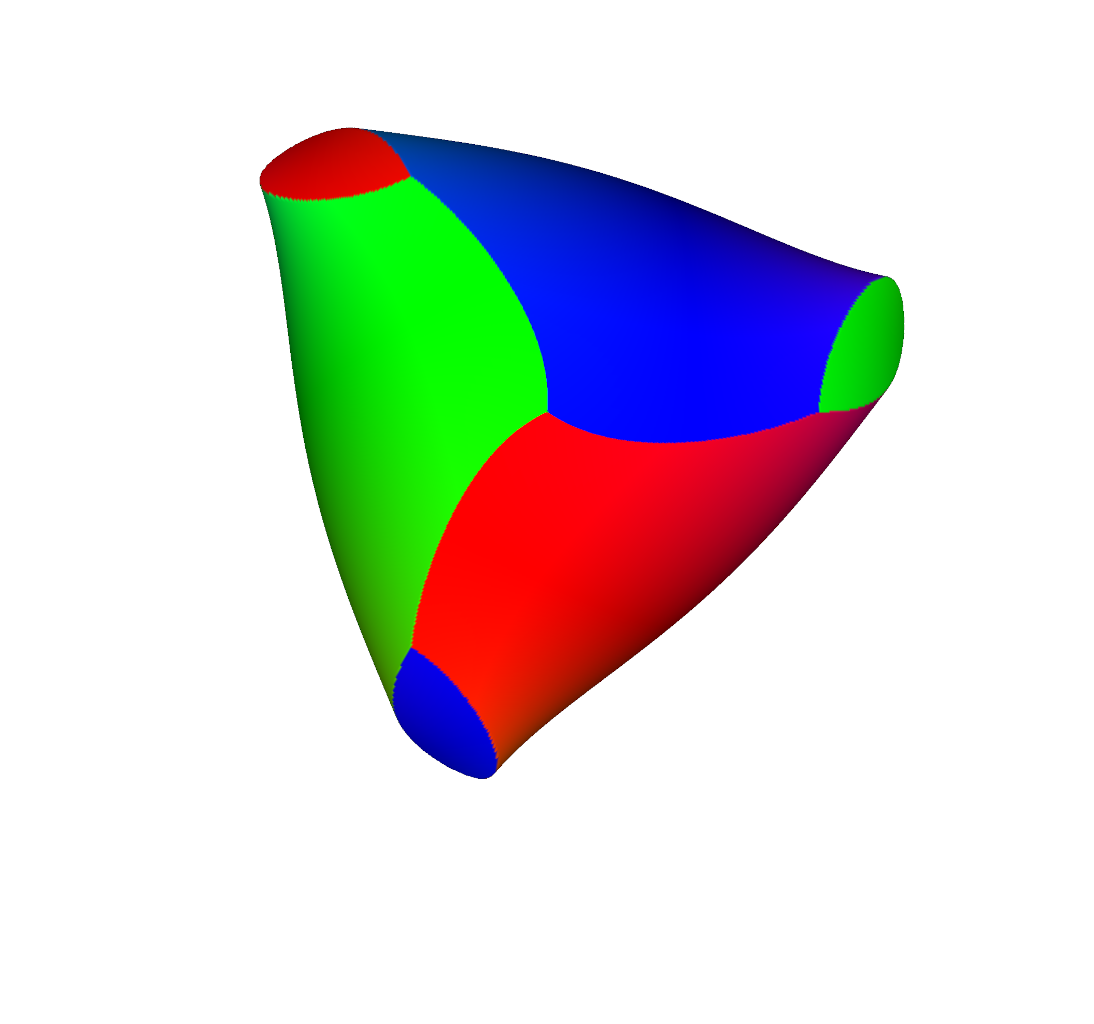}
        \caption{$\partial S_f(1)$}
    \end{subfigure}
    \hfill
    \begin{subfigure}{0.3\textwidth}
        \centering
        \includegraphics[height=1.5in]{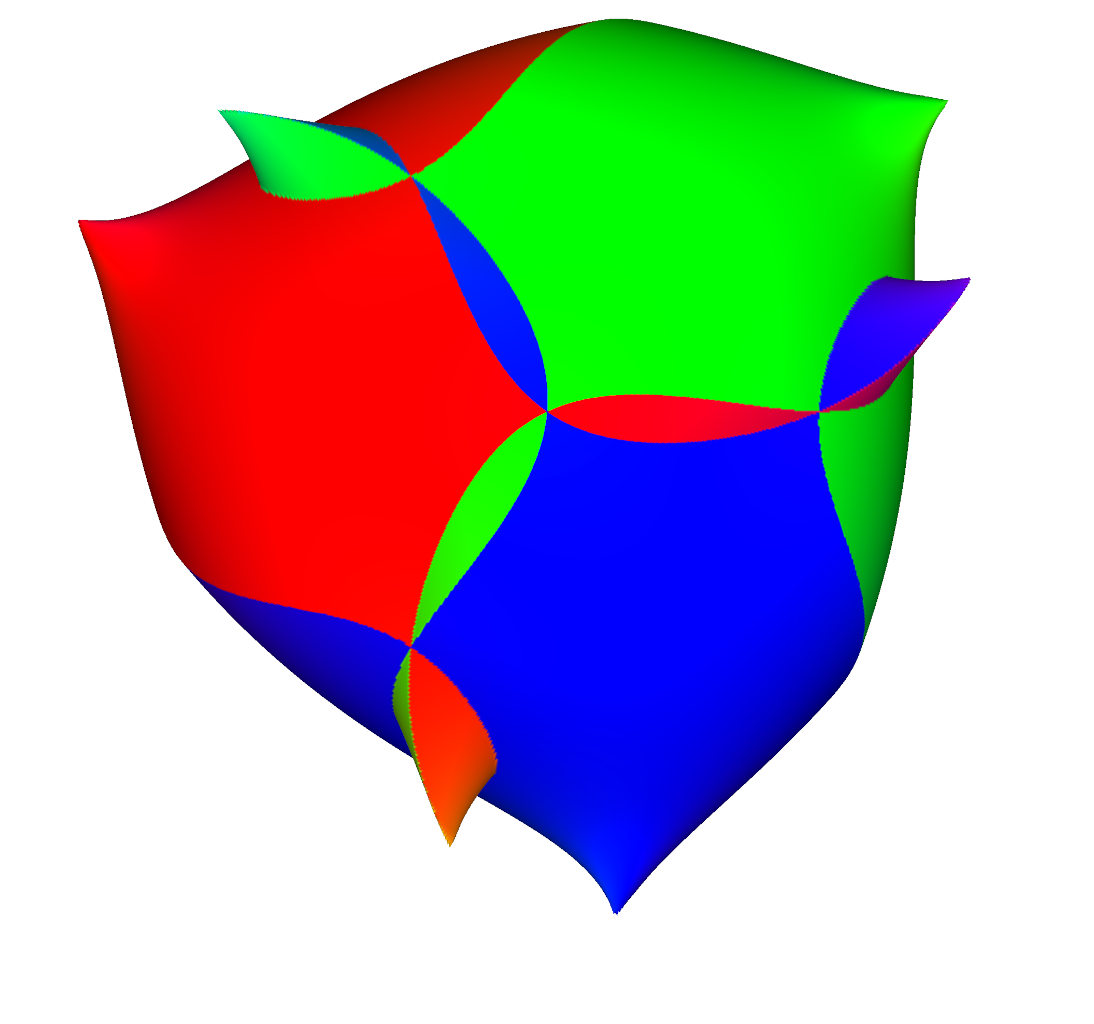}
        \caption{$\partial S_f(2)$}
    \end{subfigure}
    \hfill
    \begin{subfigure}{0.3\textwidth}
        \centering
        \includegraphics[height=1.5in]{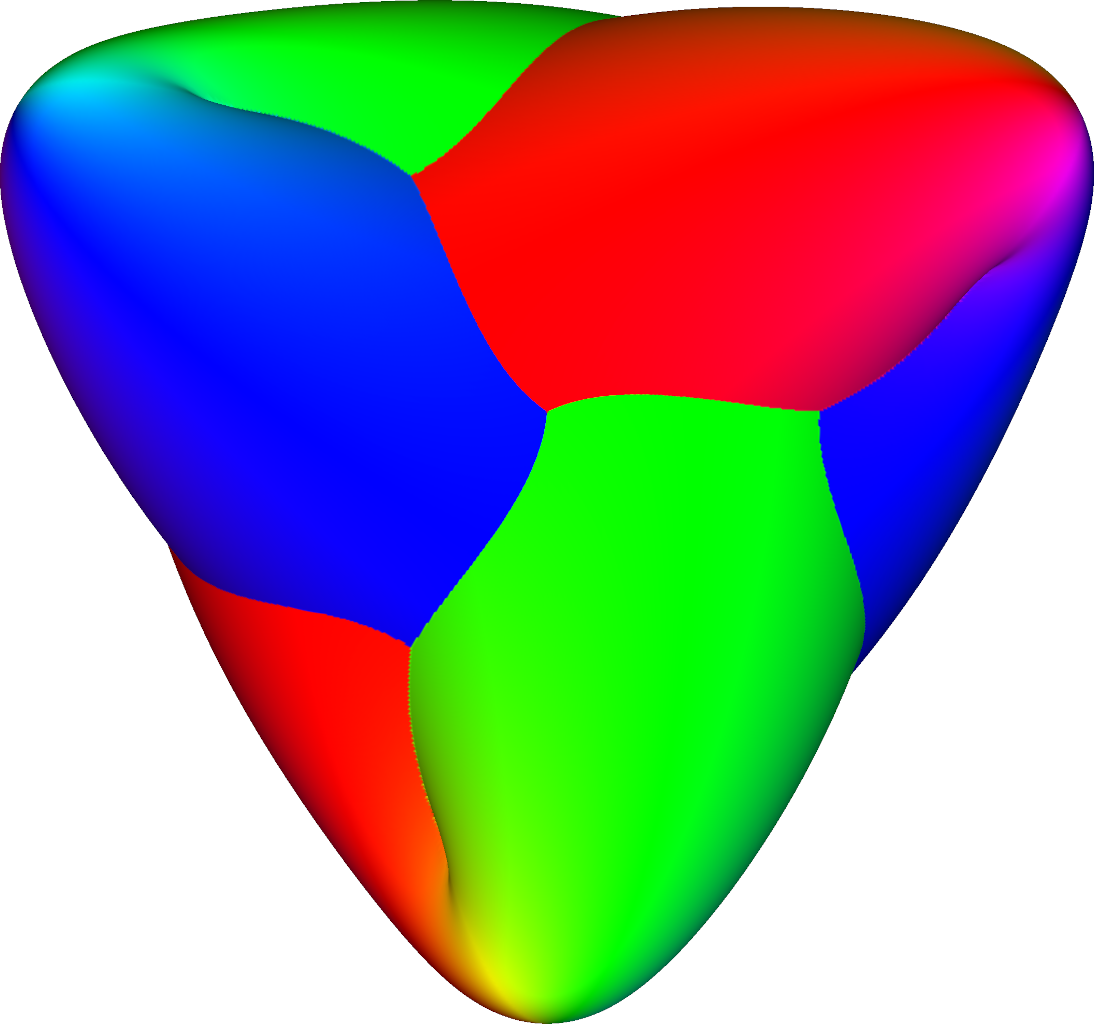}
        \caption{$\partial S_f(3)$}
    \end{subfigure}
    \caption{The curve $f(u,v)=(v+i/2)^3+u^3+1$ is a translation of the affine Fermat cubic. Since $(0,0)\notin X$, all three sets, $S_f(1)$, $S_f(2)$, and $S_f(3)$, are nonempty.}
\end{figure}
\FloatBarrier

\subsection{Ray tracing}\label{subsection:ray_tracing}

For brevity, we do not cover the theory of ray tracing in its entirety. Instead, we focus on how to compute ray-surface intersections and find the corresponding normal vectors on $\widetilde{\alpha}(Z)$. These techniques are sufficient to adapt standard implicit-surface ray tracing to create visualizations of compact Riemann surfaces. As in the previous section, the particular implementation will focus on $\alpha(X)$ as this contains all but finitely many points of $\widetilde{\alpha}(Z)$.

Let $\mathbf{o}$ be the origin of the ray, typically the camera or eye, and $\mathbf{d}$ the direction vector. We then march through the portion of the ray $\mathbf{o}+t\mathbf{d}$ contained in $B$. At each sampled point $\mathbf{p}_i$ along the ray, we compute $N_f(\mathbf{p}_i)$. Recall that this is the number of zeros, counted with multiplicity, of the polynomial
\[
\lambda\mapsto f(\beta(\mathbf{p}_i,\lambda))
\]
in the open unit disk $|\lambda|<1$. The roots can be counted using any root-finding or root-counting method. In the author's experience, the Schur--Cohn algorithm has proven highly effective, since it is tailored to counting roots in the unit disk and straightforward to implement in parallel on a GPU.

Suppose the root counts at two consecutive samples, $N_f(\mathbf{p}_i)$ and $N_f(\mathbf{p}_{i+1})$, differ. By Theorem \ref{thm:line_segment}, the segment between them intersects $\alpha(X)$ (see Figure \ref{fig:ray_tracing} for a visualization of this process). We then apply the bisection method until the interval is within the desired tolerance, which should be chosen to correspond to at least subpixel precision. The midpoint of this interval gives an approximation of the intersection. Numerically, tangential intersections are unlikely and need not be considered. This method is also far more robust at detecting multiple crossings than traditional ray tracing due to the fact that we are counting the number of zeros rather than simply checking for sign changes. Even so, there is always the possibility that the intersection with the surface is not detected by this process. For higher-quality images it is beneficial to use sufficiently small steps in the ray marching process.

\begin{figure}[!htb]
    \centering
    \begin{subfigure}[b]{0.56\textwidth}
        \centering
        \includegraphics[height=1.9in]{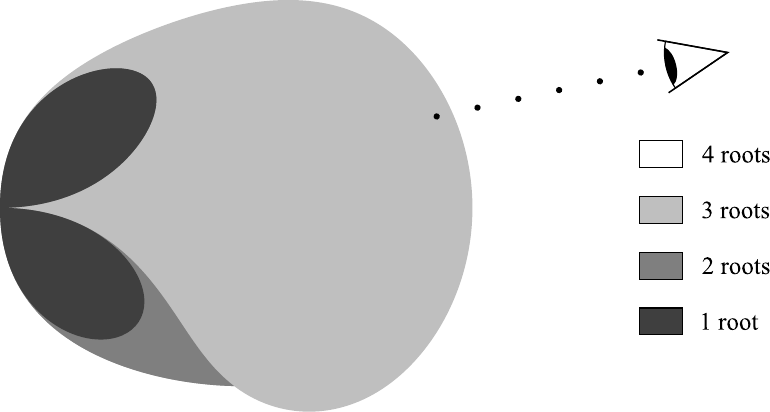}
        \caption{Root-counting in a slice of $B$}
    \end{subfigure}
    \hfill
    \begin{subfigure}[b]{0.38\textwidth}
        \centering
        \includegraphics[height=1.9in]{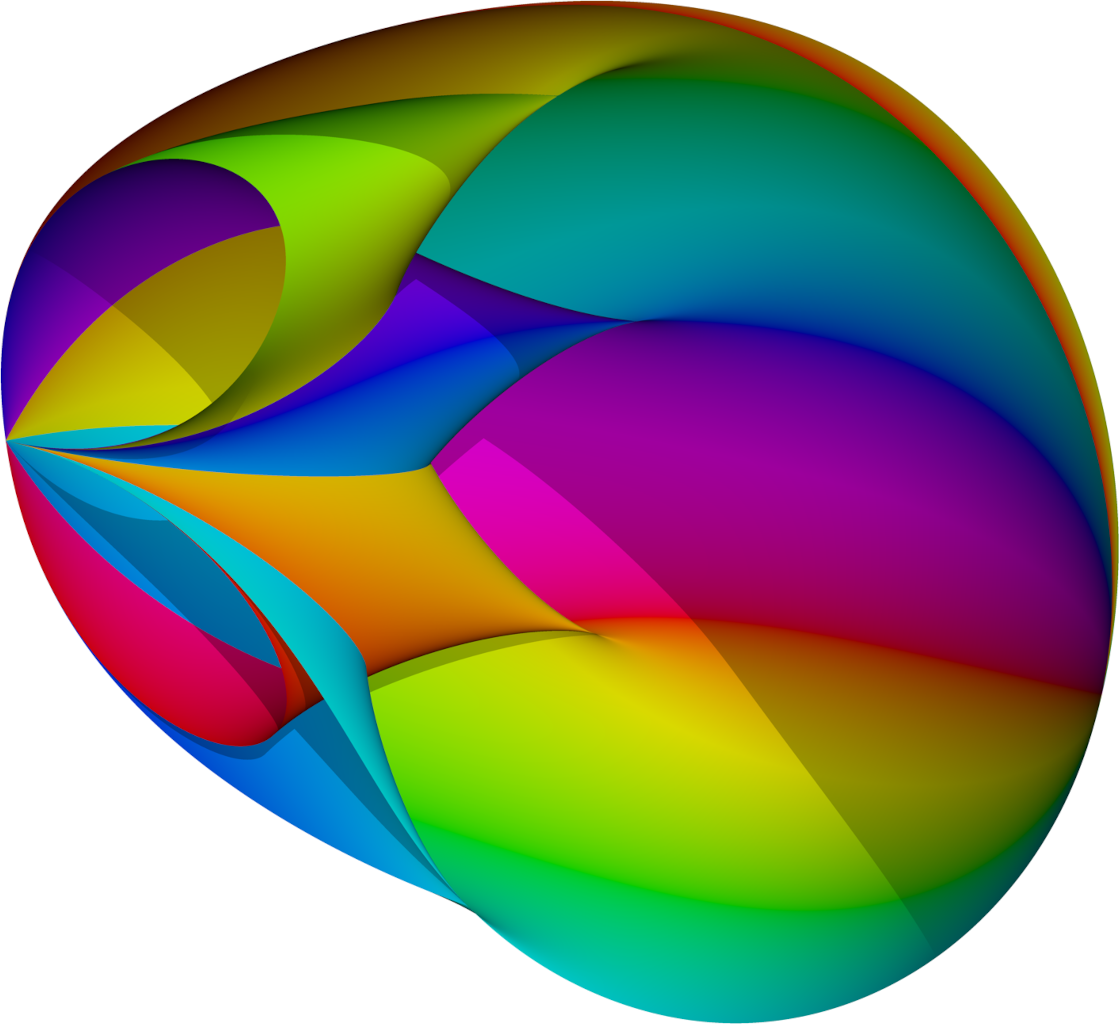}
        \caption{Cutaway view of the ray-traced surface}
    \end{subfigure}
    \caption{A quartic ray traced by detecting changes in $N_f$.}
    \label{fig:ray_tracing}
\end{figure}
\FloatBarrier

Unfortunately, finding a point in the image of $X$ is not sufficient for ray tracing the surface at that point. In order to compute lighting and reflections, a normal vector is needed. Additionally, to assign a hue, we need to know $\arg(u)$. Both of these issues can be resolved by finding the original point on $X$.

Suppose that we have a point $\mathbf{p}$ that is established to be approximately in $\alpha(X)$. Consider its preimage, which, by Theorem \ref{thm:uv_formula}, is the circle $\left\{\beta(\mathbf{p},\lambda):|\lambda|=1\right\}$. If we were working exactly, then the points on $X$ mapping to $\mathbf{p}$ would be those points $\beta(\mathbf{p},\lambda)$ for which $f(\beta(\mathbf{p},\lambda))=0$ and $|\lambda|=1$. Since we are not working exactly, we let $\lambda$ be a root of $f(\beta(\mathbf{p},\lambda))$ with modulus closest to $1$. Then $\beta(\mathbf{p},\lambda)$ is taken to be the point on $X$ corresponding to $\mathbf{p}$. Applying Theorem \ref{thm:normal_vector} to $X$ at $\beta(\mathbf{p},\lambda)$ gives an approximation of the normal vector to $\alpha(X)$ at $\mathbf{p}$. Although Theorem \ref{thm:normal_vector} requires that $\alpha$ is locally an embedding, the set of points where this fails has measure zero and, in the author's experience, ignoring this requirement has not created any visibly noticeable aberrations when ray traced at high resolution. Finally, we use this same point on $X$ to assign the hue.

\begin{algorithm}[!ht]
\caption{Ray tracing}
\begin{algorithmic}[1]
\Function{TraceRay}{$f,r,\Delta t,\varepsilon$}
    \State Sample $r\cap B$ with step size $\Delta t$
    \State Let $\mathbf{a},\mathbf{b}$ be the first adjacent samples such that
    $N_f(\mathbf{a})\neq N_f(\mathbf{b})$
    \If{no such pair exists}
        \State \Return \textsc{Miss}
    \EndIf
    \While{$\|\mathbf{a}-\mathbf{b}\|>\varepsilon$}
        \State $\mathbf{c}\gets(\mathbf{a}+\mathbf{b})/2$
        \If{$N_f(\mathbf{a})\neq N_f(\mathbf{c})$}
            \State $\mathbf{b}\gets\mathbf{c}$
        \Else
            \State $\mathbf{a}\gets\mathbf{c}$
        \EndIf
    \EndWhile
    \State $\mathbf{p}\gets(\mathbf{a}+\mathbf{b})/2$
    \State $\Lambda\gets\Call{Roots}{f(\beta(\mathbf{p},\lambda))}$
    \State Choose $\lambda_0\in\Lambda$ minimizing $\left||\lambda|-1\right|$
    \State $(u_0,v_0)\gets\beta(\mathbf{p},\lambda_0)$
    \State $\mathbf{n}\gets\Call{NormalVector}{f,u_0,v_0}$
    \State $\theta\gets\arg(u_0)$
    \State \Return $(\mathbf{p},\mathbf{n},\theta)$
\EndFunction
\end{algorithmic}
\end{algorithm}
\FloatBarrier

As with the previous mesh contraction technique, the author has had good results using the Aberth--Ehrlich method. In particular, by starting with initial guesses equally spaced on the unit circle, convergence to the root closest to the unit circle tends to be rapid. However, other root-finding techniques that find all roots could be substituted.

In this outline of the process we have glossed over the possibility of multiple roots on the unit circle. In practice, this is unlikely to happen due to floating-point approximations. In general, it suffices to ignore such matters. However, in the case that $\alpha$ creates a multiple cover of its image, there will always be multiple roots $\lambda$ whose moduli are indistinguishably close to $1$. When this happens, it is best to work in grayscale. Otherwise the resulting image will be dotted with colors from what is effectively a random choice of branch at each point. This is precisely why Figure \ref{fig:six_to_one} is rendered without color.

\section{Acknowledgements}
The author would like to thank Brenna Pasch for her work on a software implementation for visualizing a predecessor of the map $\alpha$ used in this paper.
\bibliographystyle{plain}
\bibliography{DutterProjection}

\end{document}